\documentclass[12pt]{amsart}
\usepackage[margin=1in]{geometry}

\usepackage{amsmath,amssymb,amsthm,mathtools}
\usepackage{enumitem}
\usepackage{enumitem}
\usepackage{charter}
\usepackage{newtxmath}

\allowdisplaybreaks

\usepackage{xcolor} %
\definecolor{darkred}{rgb}{0.5,0,0} %
\definecolor{darkblue}{rgb}{0,0,0.5} %

\usepackage[pagebackref,colorlinks,linkcolor=darkred,citecolor=darkblue,urlcolor=darkblue,hypertexnames=true]{hyperref}

\usepackage{euflag}

\newtheorem{theorem}{Theorem}[section]

\newtheorem{corollary}[theorem]{Corollary}
\newtheorem{lemma}[theorem]{Lemma}

\newtheorem{proposition}[theorem]{Proposition}
\newtheorem{prop}[theorem]{Proposition}
\newtheorem*{theorem*}{Theorem}
\newtheorem{customthm}{Theorem}

\theoremstyle{definition}
\newtheorem{definition}{Definition}[section]
\theoremstyle{definition}
\newtheorem{remark}{Remark}[section]

\newtheorem{example}[theorem]{Example}

\newcommand{\vd}{{\tt{d}}}

\numberwithin{equation}{section}

\title[Pure UCP Maps and Gromov--Hausdorff Convergence]
{Pure UCP Maps on Finite Toeplitz Systems and \\
	Quantum Gromov--Hausdorff Convergence}

\author[R.\ Duhan]{Ritul Duhan}
\address{Ritul Duhan, Department of Mathematics, Indian Institute of Science, Bengaluru, India}
\email{ritul2023@iisc.ac.in}
\thanks{RD was supported by the Prime Minister's Research Fellowship (PMRF ID 0202985) of the Government of India, and the DST FIST program 2021 (TPN-700661).}

\author[A.\ Jindal]{Abhay Jindal}
\address{Abhay Jindal, Faculty of Mathematics and Physics, University of Ljubljana, Slovenia}
\email{abhay.jindal@fmf.uni-lj.si}
\thanks{AJ was supported by the Slovenian Research Agency program P1-0222 and
	grant J1-50002. This work started when AJ was a research associate at the Indian
	Institute of Science, Bengaluru, where he was supported by the JC Bose Fellowship
	JCB/2021/000041 of ANRF. AJ thanks \'Ecole Polytechnique and Inria for their
	hospitality during the preparation of this manuscript. This work was performed
	within the project COMPUTE, funded within the QuantERA II Programme, which has
	received funding from the European Union's Horizon 2020 research and innovation
	programme under Grant Agreement No.~101017733 {\normalsize\protect\euflag}.}

\date{}

\begin{document}
	\begin{abstract}
		We study pure unital completely positive (UCP) maps on the finite Toeplitz operator system
		$\mathcal T_{\vd}$ of $\vd\times \vd$ Toeplitz matrices. This note makes three main contributions.
		
		\medskip
		
		\begin{enumerate}[leftmargin=*, label=(\arabic*), font=\normalfont]\itemsep=5pt
			
			\item  We give an explicit characterization of pure UCP maps
			on $\mathcal T_{\vd}$ taking values in $n \times n$ matrices $M_n$ in terms of positive
			$n\times n$ matrix-valued trigonometric polynomials of degree at most
			$\vd-1$. The characterization yields a checkable criterion for deciding
			whether a given UCP map is pure.   
			
			\item As a first application of this characterization, we prove that every pure UCP map on $\mathcal T_{\vd}$
			taking values in $M_n$ has a unique UCP extension to the generated $C^{*}$-algebra.

			\item As a second application, we prove that, for each fixed $n$, the space of pure UCP maps on $\mathcal T_{\vd}$ taking values in $M_{n}$, equipped with the matricial
			Connes distance, converges in the Gromov--Hausdorff sense to the space of
			normalized positive $n\times n$ matrix-valued Borel measures on the unit circle, equipped with the matricial Monge--Kantorovich distance.
		\end{enumerate}
	\end{abstract}
	
	\subjclass[2020]{Primary 46L07, 58B34; Secondary 15B05, 42A05, 42A82, 54E35}
	\keywords{Completely positive maps, finite Toeplitz systems, matrix convexity, Fej\'er--Riesz factorization, trigonometric polynomials, Unique extension, matricial quantum Connes--Kantorovich metrics, Gromov--Hausdorff convergence}

	\maketitle
	
	\tableofcontents
	
	\section{Introduction}
	Pure UCP maps and pure matrix states have been studied from several related viewpoints. The foundational background goes back to Arveson’s seminal work on operator systems and boundary representations \cite{Arveson69, Arveson72}. From the perspective of matrix convexity, pure UCP maps coincide with matrix extreme points; see \cite{Farenick00,Farenick04}. Pure states determine the norm of every element in an operator system \cite{Kleski14}. Pure UCP maps also play an important role in the study of $C^{*}$-extreme points and $C^*$-convexity, where they appear as the basic irreducible building blocks in decompositions of $C^*$-extreme maps; see, for example, \cite{BK22, FM97, FZ98, LP81}.

	\begin{definition}
		A nonzero completely positive (CP) map $\varphi$ is said to be pure if every CP map dominated by $\varphi$ is a scalar multiple of $\varphi$; that is, whenever $\psi\leq_{\mathrm{cp}}\varphi$, one has
		\[
		\psi = t\varphi
		\]
		for some $t\in[0,1]$. Here $\psi\leq_{\mathrm{cp}}\varphi$ means that $\varphi-\psi$ is CP. Equivalently, a CP map is pure if it spans an extreme ray in the cone of CP maps.
	\end{definition}
	
	Pure CP maps have an intrinsic connection with Stinespring dilation theory. A CP map defined on a $C^{*}$ algebra is pure if and only if its minimal Stinespring dilation is irreducible. For operator systems,
	however, this representation-theoretic criterion does not by itself give a
	complete description. The restriction of a pure CP map to an operator subsystem
	need not remain pure. Thus, unlike the $C^*$-algebraic case, the characterization
	of pure CP maps is genuinely sensitive to the particular operator system under
	consideration; see, for instance, \cite{CS21, Farenick04}.
	
	\smallskip
	
	There is another point of view on pure UCP maps. They play the role of noncommutative pure states for an operator system. At matrix level $n$, a matrix state on an operator system $\mathcal S$ is a UCP map $\varphi:\mathcal S\to M_n$, and the family $\coprod_{n=1}^\infty\operatorname{UCP}(\mathcal S,M_n)_{n\geq 1}$ forms a matrix convex set \cite{EW97, WW99}.
	Farenick \cite{Farenick00} showed that pure UCP maps are precisely the matrix extreme points of
	this matricial state space. In other words, they cannot be
	written as nontrivial matrix convex combinations of other matrix states. Hence, pure UCP maps are not merely extreme points in an ordinary convex set; they encode genuinely noncommutative boundary data. Thus, it is natural to characterize them.  
	
	\subsection{Characterization of pure UCP maps} For arbitrary operator systems, even finite-dimensional ones, a concrete characterization of pure UCP maps is generally delicate. We study the finite Toeplitz operator systems, where positive matrix-valued trigonometric polynomials encode UCP maps; see \eqref{eq:P_phi} and Proposition \ref{prop:cp-density}. The finite Toeplitz setting therefore provides a tractable but nontrivial class in which pure matrix states can be described explicitly through polynomial densities and Fej\'er--Riesz factorization.
	
	\smallskip
	
	Finite Toeplitz operator systems arise classically from compressions of multiplication operators on $L^2(\mathbb T)$, where $\mathbb T$ denotes the unit circle. They encode finite sections of Toeplitz symbols, which makes their connection with trigonometric polynomials natural. Recently, Connes and van Suijlekom \cite{CS21} studied these systems as spectral truncations of the unit circle in the framework of noncommutative geometry \cite{Connes}. Farenick \cite{Farenick21} subsequently established a complete order isomorphism between finite Toeplitz systems and the dual of the space of scalar trigonometric polynomials. In a related metric direction, Hekkelman \cite{Hekkelman22} proved that the pure state spaces of finite Toeplitz systems converge, in the Gromov--Hausdorff sense, to the state space of $C(\mathbb T)$, equivalently to the space of Borel probability measures on $\mathbb T$.
	
	\smallskip
	
	Equation \eqref{eq:P_phi} associates to a CP map $\varphi$ a matrix-valued trigonometric polynomial $P_\varphi$, while Proposition \ref{prop:cp-density} shows that these polynomials serve as densities for the measures defining the corresponding CP maps. We call $P_\varphi$ the polynomial density of $\varphi$. Thus, the characterization of pure UCP maps becomes the problem of identifying the indecomposable polynomial densities among positive matrix-valued trigonometric polynomials. We carry this out for finite Toeplitz systems in Theorem \ref{thmA}.

	\begin{customthm}\label{thmA}
		Let $\varphi: \mathcal T_{\vd} \to M_{n}$ be a UCP map with polynomial density
		$P_{\varphi}$. Then $\varphi$ is pure if and only if the following conditions hold:
		\begin{enumerate}[leftmargin=*, label=(\arabic*), font=\normalfont]\itemsep=3pt
			\item There exists a row polynomial
			\[ 
			Q(z) \,=\,\begin{pmatrix} q_1(z)&q_2(z)&\cdots&q_n(z)
			\end{pmatrix},
			\qquad q_j\in \mathbb C[z],\qquad \max_{j}\deg q_j = \vd-1,
			\]
			such that $P_\varphi(z)=Q(z)^*Q(z)$ for all $z\in\mathbb T$.
			\item If $g_\varphi=\gcd(q_1,\ldots,q_n)$ denotes the greatest
			common divisor of the polynomials appearing in condition $(1)$, then all zeros
			of $g_\varphi$ lie on $\mathbb T$.
		\end{enumerate}
	\end{customthm}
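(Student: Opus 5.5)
The plan is to work entirely with polynomial densities. By \eqref{eq:P_phi} and Proposition \ref{prop:cp-density}, CP maps $\psi\colon\mathcal T_{\vd}\to M_n$ correspond exactly to $M_n$-valued trigonometric polynomials $P_\psi$ of degree at most $\vd-1$ that are positive semidefinite on $\mathbb T$. This correspondence is linear, so $\psi\le_{\mathrm{cp}}\varphi$ if and only if $0\le P_\psi\le P_\varphi$ pointwise on $\mathbb T$. Purity of $\varphi$ therefore means the following: $P_\varphi$ spans an extreme ray of the cone $\mathcal C_{\vd-1}$ of positive $M_n$-valued trigonometric polynomials of degree $\le \vd-1$. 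That is, every $R\in\mathcal C_{\vd-1}$ with $R\le P_\varphi$ must equal $tP_\varphi$. Both directions will use the matrix Fej\'er--Riesz theorem: every $R\in\mathcal C_{\vd-1}$ can be written $R=S^*S$ with $S$ an $n\times n$ analytic matrix polynomial of degree $\le\vd-1$.

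\emph{Necessity.} Suppose $\varphi$ is pure, and write $P_\varphi=H^*H$ with rows $h_1,\dots,h_n$ of $H$. Then $P_\varphi=\sum_i h_i^*h_i$, and each summand lies in $\mathcal C_{\vd-1}$ and is dominated by $P_\varphi$. Hence every nonzero summand is a multiple of $P_\varphi$, so $P_\varphi=Q^*Q$ for a single row polynomial $Q$ (a rescaled nonzero row).

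Next, suppose $\deg Q<\vd-1$. On $\mathbb T$ we have $Q^*Q=\tfrac14|1+z|^2Q^*Q+\tfrac14|1-z|^2Q^*Q$. Both pieces have degree $\le\vd-1$ and are not proportional to $P_\varphi$, which contradicts purity; so $\deg Q=\vd-1$.

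Finally, suppose $g_\varphi$ has a zero $\alpha\notin\mathbb T$, and write $Q=(z-\alpha)Q'$. On $\mathbb T$ one has $|z-\alpha|^2\ge(1-|\alpha|)^2>0$. So for small $\varepsilon>0$, both $\varepsilon Q'^*Q'$ and $(|z-\alpha|^2-\varepsilon)Q'^*Q'$ lie in $\mathcal C_{\vd-1}$, and they sum to $P_\varphi$. The first is not proportional to $P_\varphi$, because the ratio $|z-\alpha|^{-2}$ is nonconstant. This again contradicts purity.

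\emph{Sufficiency.} Assume (1) and (2), and write $Q=gQ_0$ with $g=g_\varphi$ and the entries of $Q_0$ coprime. By B\'ezout there is a column polynomial $a$ with $Q_0(z)a(z)\equiv1$. Let $0\le R\le P_\varphi$ with $R\in\mathcal C_{\vd-1}$, and factor $R=S^*S$ with $\deg S\le \vd-1$. Since $R(z)\le Q(z)^*Q(z)$ has rank $\le1$, each row $S_i(z)$ lies in $\mathbb C\,Q_0(z)$ for all $z\in\mathbb T$ outside the zeros of $g$. Hence $S_i=\lambda_iQ_0$ with $\lambda_i=S_ia$ a polynomial, first on $\mathbb T$ minus finitely many points and then identically. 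Comparing degrees gives
\[
\deg\lambda_i\le \vd-1-\deg Q_0=\deg g .
\]
Moreover $R\le P_\varphi$ yields $\sum_i|\lambda_i|^2\le|g|^2$ on $\mathbb T$. By (2), every zero $\zeta$ of $g$ lies on $\mathbb T$, say with multiplicity $m$. The inequality $|\lambda_i(z)|\le|g(z)|=O(|z-\zeta|^m)$ along the circle forces $\lambda_i$ to vanish at $\zeta$ to order $\ge m$. Thus $g\mid\lambda_i$, and the degree bound gives $\lambda_i=c_ig$ with constants $c_i$. Therefore $R=\big(\sum|c_i|^2\big)P_\varphi$, with $t=\sum|c_i|^2\le1$, so $\varphi$ is pure.

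The main obstacle is this last step. It needs to be set up carefully: the matrix Fej\'er--Riesz factorization must have the degree bound $\deg S\le\vd-1$, and the passage from the pointwise rank-one condition to $S_i=\lambda_iQ_0$ with \emph{polynomial} $\lambda_i$ must be justified via B\'ezout. This is exactly where both hypotheses enter: coprimality of $Q_0$ produces polynomial multipliers, and the zeros of $g$ lying on $\mathbb T$ force $g\mid\lambda_i$.
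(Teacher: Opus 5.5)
There is one step in your necessity argument that fails as written: the off-circle zero $\alpha=0$. In that case $|z-\alpha|^2\equiv 1$ on $\mathbb T$, so $\varepsilon Q'^*Q'=\varepsilon P_\varphi$, and your splitting $P_\varphi=\varepsilon Q'^*Q'+(|z-\alpha|^2-\varepsilon)Q'^*Q'$ is trivial. Your claim that $|z-\alpha|^{-2}$ is nonconstant holds only for $\alpha\neq 0$.

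The repair is one line. If $z$ divides every $q_j$, then $P_\varphi=Q'^*Q'$ on $\mathbb T$ with $\deg Q'\le \vd-2$, and your own degree step applied to $Q'$ contradicts purity. Alternatively, set $\tilde Q=(1-\bar\alpha z)Q'$ and use $Q^*Q=\tfrac14(Q+\tilde Q)^*(Q+\tilde Q)+\tfrac14(Q-\tilde Q)^*(Q-\tilde Q)$, as in Lemma \ref{lem:extreme ray}; this covers all $\alpha\notin\mathbb T$ at once.

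The rest checks out. Your sufficiency argument (B\'ezout multipliers, degree count, vanishing order at the unimodular zeros of $g$) is essentially the paper's Proposition \ref{prop:domination} combined with the converse half of Lemma \ref{lem:extreme ray}. The only difference is that you run it row by row on each $\lambda_i$ instead of first collapsing $\sum_\ell|\alpha_\ell|^2$ into a single $|h|^2$.

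Once patched, your necessity proof takes a genuinely different and more elementary route than the paper.
\begin{itemize}
\item The paper gets the rank-one factorization in three steps. It invokes Arveson's pure extension theorem to extend $\varphi$ to a pure UCP map on $M_\vd$. It then uses Lemma \ref{lem:pureucp} to write $\varphi=V^*(\cdot)V$, and Proposition \ref{prop:compression-density} to get the row factor. Finally, it uses Proposition \ref{prop:domination} to show that $|g_\varphi|^2$ spans an extreme ray of the scalar cone $C(\mathbb T)^+_{(\vd-1-\vd_r)}$, and reads off both conditions from Lemma \ref{lem:extreme ray}.
\item You instead split an arbitrary Fej\'er--Riesz factorization into its rows and apply purity directly to each summand $h_i^*h_i\le P_\varphi$. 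This needs only the order isomorphism of Lemma \ref{lem:bijection} and Fej\'er--Riesz. You then exclude low degree and off-circle zeros by explicit decompositions in the matrix cone.
\end{itemize}
Your route is self-contained and shows that the pure extension theorem is not actually needed; the compression form $\varphi=V^*(\cdot)V$ still follows afterwards from Proposition \ref{prop:compression-density}. The paper's route reduces the degree and zero conditions to a single scalar extreme-ray statement, at the cost of relying on that nontrivial extension theorem.
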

	
	\begin{remark}
		Several remarks related to Theorem \ref{thmA} are in order. 
		
		\begin{enumerate}[leftmargin=*, label=(\arabic*), font=\normalfont]\itemsep=5pt
			\item[(a)] In Subsection \ref{ssec:checkable condition}, we explain how the characterization in Theorem \ref{thmA} leads to a practical criterion for deciding whether a given UCP map is pure. The discussion in that subsection also shows that the polynomial density of a pure UCP map admits a unique Fej\'er--Riesz factorization, up to left multiplication by a constant unit column vector.
			
			\item[(b)]  For UCP maps of the form
			\[
			T\mapsto V^*TV, \qquad {T \in \mathcal T_{\vd}},
			\]
			where $V$ is an isometry, purity can be decided directly from $V$; see Subsection \ref{ssec:isometry}.
			
			\item[(c)] Pure states ($n=1$ case) on $\mathcal T_{\vd}$ were characterized in \cite{CS21,Hekkelman22}. Our treatment is self-contained and approaches the result through polynomial densities and Fej\'er--Riesz factorization.
		\end{enumerate}
	\end{remark}
	
	Theorem \ref{thmA} has two applications. The first concerns the unique extension phenomenon for pure UCP maps which we discuss below.

	\subsection{Unique CP extension} Let $\mathcal{S}$ be an operator system and $\mathcal{H}$ be a Hilbert space. A CP map 
	$ \varphi: \mathcal{S} \to \mathcal{B}(\mathcal{H}) $
	is said to have a unique CP extension if there exists a unique CP map 
	$ \tilde{\varphi}: \mathcal{C^*}(\mathcal{S}) \to \mathcal{B}(\mathcal{H}) $
	such that $\tilde{\varphi}|_{\mathcal{S}} = \varphi$. The characterization of pure UCP maps leads to a unique extension phenomenon for pure UCP maps on finite Toeplitz systems. We first obtain a general criterion for a CP map on $\mathcal T_{\vd}$ to have a unique CP extension to $C^*(\mathcal T_{\vd})=M_{\vd}$, expressed in terms of the Fej\'er--Riesz factorizations of its polynomial density. Combining this criterion with the characterization of pure UCP maps gives us the following result.
	
	\begin{customthm} \label{thmB}
		If \( \varphi:\mathcal T_{\vd}\to M_n \) is a pure UCP map, then
		\(\varphi\) has a unique CP extension to \(M_{\vd}\).
	\end{customthm}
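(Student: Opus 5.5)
We reduce Theorem \ref{thmB} to a statement about Fej\'er--Riesz factorizations of the polynomial density and then apply Theorem \ref{thmA}. Every CP extension $\tilde\varphi: M_{\vd}\to M_n$ has the form $\tilde\varphi(X)=\sum_k W_k^*XW_k$, and after stacking we may write $\tilde\varphi(X)=W^*(X\otimes I_m)W$ with $W:\mathbb C^n\to\mathbb C^{\vd}\otimes\mathbb C^m$.

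Restricting to the Toeplitz matrices $T_f$ (compressions of multiplication by trigonometric polynomials $f$ of degree $<\vd$) and comparing with \eqref{eq:P_phi}, the density of $\varphi=\tilde\varphi|_{\mathcal T_{\vd}}$ factors as
\[
P_\varphi(z)=Q_W(z)^*Q_W(z).
\]
Here $Q_W(z)=\sum_{j=0}^{\vd-1} z^j W_j$ is an $m\times n$ analytic matrix polynomial whose coefficients $W_j$ are the blocks of $W$ corresponding to the basis vectors $e_j$. Concretely, $\langle T_f v,w\rangle=\int f\,\overline{\hat w}\hat v$ with $\hat v(z)=\sum v_jz^j$. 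Conversely, any factorization $P_\varphi=Q^*Q$ with $Q$ analytic of degree $\le \vd-1$ defines a CP extension. Moreover, $\tilde\varphi$ is determined by the positive operator matrix $\big(\tilde\varphi(E_{ij})\big)=(W_i^*W_j)_{i,j}$, i.e.\ by the Gram data of the coefficients. Hence uniqueness of the extension is equivalent to the following criterion: \emph{any two analytic factorizations $Q_1^*Q_1=Q_2^*Q_2=P_\varphi$ of degree $\le\vd-1$ have equal coefficient Gram matrices}, i.e.\ $Q_2=UQ_1$ for some isometry $U$ between the ranges. This is the general criterion announced before the theorem, and establishing it carefully is the first step.

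Second, for pure $\varphi$, Theorem \ref{thmA} gives a row factorization $P_\varphi=Q^*Q$ with $Q=(q_1,\dots,q_n)$ and $\gcd$ $g$ having all zeros on $\mathbb T$. Take an arbitrary factorization $P_\varphi=R^*R$ with $R$ an $m\times n$ analytic polynomial of degree $\le\vd-1$. Then $P_\varphi$ has rank one on $\mathbb T$, so each row of $R$ is a polynomial multiple of $Q$ up to rational factors. Write $Q=gQ_0$ with $\gcd(Q_0)=1$. For $z\in\mathbb T$ away from zeros, the rank-one condition gives $R(z)=c(z)Q_0(z)$ for a column vector $c(z)$. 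Since $Q_0$ has coprime entries, a B\'ezout identity $\sum a_jq_{0,j}=1$ shows that $c=R\,a$ is a vector polynomial, so $c$ is analytic. Then $|c|^2=|g|^2$ on $\mathbb T$, where $|c|^2:=c^*c$. This is a scalar Fej\'er--Riesz problem for the nonnegative trigonometric polynomial $|g|^2$, all of whose roots lie on $\mathbb T$. Degree constraints ($\deg c+\deg Q_0\le \vd-1=\deg g+\deg Q_0$) and the fact that all roots of $g$ are unimodular force $g\mid c_k$ for each entry. Indeed, at each root $\zeta\in\mathbb T$ of $g$ of order $r$, $|c|^2$ vanishes to order $2r$ there, and $|c(z)|^2=\sum_k|c_k(z)|^2$ as a sum of nonnegative terms forces each $c_k$ to vanish to order $r$. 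So $c=g u$ with $u$ a polynomial vector satisfying $|u|=1$ on $\mathbb T$ and $\deg u\le 0$ by the degree bound; that is, $u$ is a constant unit vector. Thus $R=uQ$, matching remark (a), and the coefficient Gram matrices of $R$ and $Q$ coincide.

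The main obstacle is the first step, namely the bijection between CP extensions and analytic factorizations (including the possibility $m>n$, and the identification of the extension with the Gram data of coefficients), together with the degree bookkeeping in the second step. The key point there is that the degree cap $\vd-1$ rules out non-constant inner factors. If the vanishing-order argument at unimodular roots needs care, I would instead argue via the outer-inner factorization: write $c=g u$ and use that $c/g$ is bounded analytic with unimodular boundary values, hence inner and of degree $\le 0$.
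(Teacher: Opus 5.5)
Your proof is correct and follows the paper's route: you reduce uniqueness of the CP extension to equality of the coefficient Gram matrices of all Fej\'er--Riesz factors (Proposition~\ref{prop:UCP}), then use Theorem~\ref{thmA} to show that every factor $R$ of $P_\varphi$ equals $uQ$ for a constant unit column vector $u$, so that $R_i^*R_j=Q_i^*Q_j$. The one difference is that the paper gets $R=uQ$ by citing Subsection~\ref{ssec:checkable condition}, while you prove it directly (B\'ezout identity for $Q_0$, vanishing order of each $c_k$ at the unimodular roots of $g$, and the bound $\deg c_k\le\deg g$); this fills in the step there, that the rows are \emph{constant} multiples of a single row, which the paper only asserts.
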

	
	\begin{remark}
		Several remarks related to Theorem \ref{thmB} are in order. 
		
		\begin{enumerate}[leftmargin=*, label=(\arabic*), font=\normalfont]\itemsep=5pt
			\item[(a)] More generally, if the polynomial density of a CP map admits a unique
			Fej\'er--Riesz factorization up to left multiplication by a constant isometry,
			then the CP map admits a unique CP extension; see Proposition \ref{prop:UCP}. 
			
			\item[(b)] As a consequence of Theorem \ref{thmB}, we obtain the following uniqueness result. Let $\varphi$ be a pure UCP map and suppose that
			\[
			\varphi(T) \,=\, V^*TV \,=\, W^*TW,\qquad T\in\mathcal T_{\vd},
			\]
			for two isometries $V,W:\mathbb C^n\to\mathbb C^{\vd}$. Then
			$ V=\lambda W $
			for some $\lambda\in\mathbb T$.
			
			\item[(c)] Theorem \ref{thmB} reflects a special rigidity of finite Toeplitz systems. We show by example that purity, even in a finite-dimensional hyperrigid operator system, need not imply a unique CP extension; see example \ref{ex: counter example}. 
			
		\end{enumerate}
	\end{remark}
	
	Unique extension phenomena for pure states on subspaces of $C^*$-algebras have been studied in \cite{Clouatre25}. A comparison between Theorem \ref{thmB} and Arveson’s unique extension property \cite{Arveson69, Arveson72} is given in Remark \ref{rem:comparison}.

	\subsection{Quantum Gromov--Hausdorff convergence} Gromov--Hausdorff convergence provides a natural way to compare metric spaces without requiring them to be embedded in a common ambient space. Noncommutative analogues of this convergence were developed in Rieffel's
	theory of compact quantum metric spaces \cite{Rieffel99, Rieffel04}, with
	matricial versions introduced by Kerr \cite{Kerr03} and further studied by
	Kerr--Li \cite{KerrLi09}. This theme has since appeared in several directions, including propinquity-type distances \cite{L16, L22}, convergence phenomena for spectral truncations \cite{CS21, MS24, W}, and examples arising from other
	noncommutative spaces \cite{AKK22, JRZ18, M25, Rieffel23}.  Our next result, Theorem \ref{thmC}, belongs to this circle of ideas.

	\smallskip
	
	For fixed $n$, once a description of pure UCP maps is available, it is natural to ask whether the finite-dimensional boundary objects
	\[
	\operatorname{PureUCP}(\mathcal T_{\vd},M_n)
	\]
	are sufficiently rich to approximate the matrix-state space
	\[
	\operatorname{UCP}(C(\mathbb T),M_n)
	\]
	as $\vd \uparrow \infty.$ To make this question precise, we equip $\operatorname{PureUCP}(\mathcal T_{\vd},M_n)$ with the matricial Connes distance, denoted by $\rho_{\vd,n}$, and $\operatorname{UCP}(C(\mathbb T),M_n)$ with the matricial Monge--Kantorovich distance, denoted by $\rho_n$, both induced by Lipschitz seminorms on the corresponding operator systems; see Subsection \ref{ssec:GHconvergence} for more details. The following theorem shows that the metric spaces of pure UCP maps on finite Toeplitz systems approximate the metric space of UCP maps on $C (\mathbb T)$ in the Gromov--Hausdorff sense. 
	
	\smallskip

	\begin{customthm} \label{thmC}
		Fix $ n\geq 2 $. Then
		\[
		d_{GH} \left ( \bigl( \operatorname{PureUCP}(\mathcal T_{\vd},M_n),\rho_{\vd,n} \bigr),\, \bigl(  \operatorname{UCP}(C(\mathbb T),M_n), \rho_{n} \bigr) \right)
		\longrightarrow 0 \qquad \text{as} \qquad \vd \uparrow \infty.
		\]
	\end{customthm}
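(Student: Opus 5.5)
The plan is to build correspondences between the two spaces with small distortion, in the style of Hekkelman's argument for $n=1$. In one direction, every pure UCP map $\varphi$ on $\mathcal T_{\vd}$ is sent to a UCP map on $C(\mathbb T)$. By Proposition \ref{prop:cp-density}, $\varphi$ is given by integrating against the measure $P_\varphi(z)\,dz/2\pi$, so $\varphi$ extends to the UCP map
\[
\Phi_\varphi(f)=\frac{1}{2\pi}\int_{\mathbb T} f(z)\,P_\varphi(z)\,|dz|
\]
on $C(\mathbb T)$. In the other direction, we need a map $\Psi_{\vd}:\operatorname{UCP}(C(\mathbb T),M_n)\to \operatorname{PureUCP}(\mathcal T_{\vd},M_n)$.

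To construct $\Psi_{\vd}$, approximate a given $\mu$ in the $\rho_n$ metric by measures whose density has the form $Q^*Q$. Here $Q=(q_1,\dots,q_n)$ is a row polynomial of degree exactly $\vd-1$ whose gcd has all zeros on $\mathbb T$; these are exactly the pure maps of Theorem \ref{thmA}. The construction runs as follows.
\begin{enumerate}[leftmargin=*, label=(\roman*), font=\normalfont]
\item Use the fact that $\rho_n$ metrizes the weak$^*$ topology on the compact set $\operatorname{UCP}(C(\mathbb T),M_n)$. This reduces the problem to approximating each $\mu$ weak$^*$ uniformly; the uniformity comes from a finite $\varepsilon$-net.
\item Approximate $\mu$ by a finitely supported matrix measure $\sum_k \delta_{\zeta_k}\otimes A_k$ with $A_k\ge 0$ and $\sum A_k=I$.
\item Write each $A_k$ as $\sum_j a_{kj}^* a_{kj}$ with rank-one factors (row vectors $a_{kj}$).
\item Replace each $\delta_{\zeta_k}$ by a Fejér-kernel-type density $|F_{m}(\bar\zeta_k z)|^2$. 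Here $F_m$ is a normalized Dirichlet-type polynomial, so $|F_m|^2$ is the Fejér kernel, which concentrates at $\zeta_k$ as $m\sim\vd$ grows.
\end{enumerate}
This produces the candidate $Q(z)=\sum_{k} F_m(\bar\zeta_k z)\,a_k$. The remaining task is to make $Q^*Q$ close to $\sum_k |F_m(\bar\zeta_kz)|^2 a_k^*a_k$, i.e.\ to control the cross terms $F_m(\bar\zeta_k z)\overline{F_m(\bar\zeta_l z)}$ for $k\ne l$. These are small in weak$^*$ sense because the Fejér peaks separate once $m$ is large compared with the inverse spacing of the $\zeta_k$.

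Two further adjustments are needed. First, if $\mu$ has rank greater than one at some point, distinct points must be used for different rank-one summands; perturbing $\zeta_k$ slightly costs little in $\rho_n$. Second, $Q$ must satisfy the normalization $\frac{1}{2\pi}\int Q^*Q=I$, have degree exactly $\vd-1$, and satisfy the gcd condition. The normalization is restored by replacing $Q$ with $Q\,G^{-1/2}$, where $G$ is the Gram matrix, and $G\approx I$. The gcd condition holds generically, since one may perturb coefficients so that the $q_j$ have no common root; a small perturbation of coefficients moves the measure only slightly. Purity then follows from Theorem \ref{thmA}.

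It remains to compare metrics. We must show that $\rho_{\vd,n}(\varphi,\psi)$ is uniformly close to $\rho_n(\Phi_\varphi,\Phi_\psi)$, and that $\Phi_{\Psi_{\vd}(\mu)}$ is uniformly close to $\mu$. Following Connes--van Suijlekom and Hekkelman, I would use the Fejér--type maps. One is a cut-down $C(\mathbb T)\to\mathcal T_{\vd}$, $f\mapsto P_{\vd}fP_{\vd}$. The other goes back as $\mathcal T_{\vd}\to C(\mathbb T)$ via symbol with Fejér weights. Both are UCP, nearly Lipschitz-contractive, and nearly inverse on Lipschitz balls with error $O(1/\vd)$. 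Because these maps are UCP and commute with the matrix amplification, the scalar estimates transfer to the matricial Lipschitz seminorms with the same constants. These estimates give the distortion bound for the correspondence $\{(\varphi,\Phi_\varphi)\}\cup\{(\Psi_{\vd}(\mu),\mu)\}$ in the standard way.

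I expect the main obstacle to be the density step, specifically the surjectivity estimate that $\Psi_{\vd}$ can be made uniformly close to the identity. There are two difficulties here. Pure maps have rank-one densities, which forces the control of the cross terms described above. In addition, approximating a general $\mu$, including one with full-rank parts, by rank-one polynomial densities of a fixed degree must be done with an error uniform in $\mu$. I would handle this with the finite $\varepsilon$-net argument together with a quantitative Fejér-kernel concentration bound.
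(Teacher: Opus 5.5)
Your argument is correct, and its skeleton matches the paper's. You use the embedding $\varphi\mapsto\varphi\circ R_{\vd}$, which is your $\Phi_\varphi$ by Proposition~\ref{prop:cp-density}. The distortion bound comes from van Suijlekom's maps $R_{\vd}$ and $S_{\vd}$. Density of pure maps comes from Theorem~\ref{thmA}, after a gcd perturbation and the renormalization $Q\mapsto QG^{-1/2}$.

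You genuinely differ in how you manufacture rank-one densities $Q^*Q$ close to an arbitrary $\mu$. The paper separates in frequency:
\begin{itemize}
\item it smooths $\mu$ with the Fej\'er kernel and factors the result as $H^*H$ by the matrix Fej\'er--Riesz theorem;
\item it then sets $Q=\sum_\ell z^{N_\ell}h_\ell$ with large gaps, so the cross terms have zero mean and are uniformly small by the decay $|\widehat f(k)|\le 1/|k|$ for Lipschitz $f$;
\item the price is the matrix Fej\'er--Riesz theorem, plus a separate step (Lemma~\ref{lem:forcing purity}) to bring the degree to exactly $N-1$.
\end{itemize}
You separate in space instead. You discretize $\mu$, split each atom into rank-one pieces placed at distinct nearby points, and attach peaks $F_{\vd-1}(\bar\zeta_k z)$. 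This is more explicit, avoids matrix Fej\'er--Riesz, and gives degree $\vd-1$ directly.

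The trade-off is that your cross terms in general have nonzero mean of order $1/(\vd\,\delta)$, where $\delta$ is the minimal separation. So the $G^{-1/2}$ correction is really needed, and the smallness of the cross terms depends on $\delta$. The cleanest justification is Cauchy--Schwarz on a neighbourhood of $\zeta_k$ and on its complement. This shows the cross terms tend to $0$ in $L^1$, not merely weak$^*$, which suffices against Lipschitz $f$ normalized by $f(1)=0$. Because of the $\delta$-dependence, your compactness/finite-net step (or a fixed grid with fixed offsets) is essential. The paper's Proposition~\ref{prop:Hausdorff convergence} only proves pointwise approximation and leaves this uniformity implicit, so you are more careful on that point.

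Two small remarks.
\begin{itemize}
\item Your claim that the gcd condition holds generically is exactly where $n\ge 2$ enters, since for $n=1$ the gcd is $q_1$ itself. You should say so.
\item $\rho_{\vd,n}$ and $\rho_n$ are defined through the scalar seminorms $L_{\vd}$ and $\operatorname{Lip}$. So no transfer to matricial Lipschitz seminorms is needed; the comparison uses only that UCP maps are unital contractions.
\end{itemize}
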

	
	\begin{remark}
		Two remarks related to Theorem \ref{thmC} are in order. 
		
		\begin{enumerate}[leftmargin=*, label=(\arabic*), font=\normalfont]\itemsep=5pt
			\item[(a)] The statement of Theorem \ref{thmC} also holds in the scalar case $n=1$, where
			it was proved by Hekkelman \cite{Hekkelman22}. We point out, however, that the scalar and matrix-valued cases require rather different arguments. Our proof uses features specific to the genuinely matrix-valued setting $n\geq 2$ and should therefore be viewed as complementary to Hekkelman's scalar approach.
			
			\item[(b)] The convergence of the full matrix-state spaces
			$ \operatorname{UCP}(\mathcal T_\vd,M_n) $
			to $\operatorname{UCP}(C(\mathbb T),M_n) $
			was proved in \cite{BDP25}, although with respect to a different metric.
		\end{enumerate}
		
	\end{remark}

	\subsection{Organization of the paper}
	The paper is organized as follows. In Section \ref{sec:preliminaries}, we recall the necessary background on finite Toeplitz operator systems, completely positive maps, matrix-valued trigonometric polynomials, matricial Connes--Kantorovich metrics, and Gromov--Hausdorff distance. In Section \ref{sec:polynomial density}, we establish the connection between UCP maps on $\mathcal T_{\vd}$ and matrix-valued trigonometric polynomials. In Section \ref{sec:pure-ucp}, we prove the characterization of pure UCP maps on $\mathcal T_{\vd}$ stated in Theorem \ref{thmA}. We also explain how this characterization leads to a practical criterion for deciding whether a given UCP map is pure, and we discuss the case of UCP maps of the form $T\mapsto V^*TV$, where
	$V:\mathbb C^n\to\mathbb C^{\vd}$ is an isometry. In Section \ref{sec:unique-extension}, we prove the unique CP extension theorem for pure UCP maps and give an example showing that this phenomenon is special to the Toeplitz setting. In Section \ref{sec:Hausdorff convergence}, we prove Hausdorff convergence after embedding the approximating spaces into the limiting space. Finally, in Section \ref{sec:G--H convergence}, we prove the Gromov--Hausdorff convergence result stated in Theorem \ref{thmC}.

	\section{Preliminaries} \label{sec:preliminaries}

	\subsection{Finite Toeplitz operator systems as truncations of the circle}
	For $\vd\geq 1$, let $\mathcal T_{\vd}\subseteq M_{\vd}$ denote the operator system consisting of all $\vd\times \vd$ Toeplitz matrices:
	\[
	\mathcal T_\vd
	\, = \, 
	\left\{ [a_{i-j}]_{i,j=0}^{\vd-1}:a_{-(\vd-1)},\ldots,a_{\vd-1}\in \mathbb C \right\}
	\,\subseteq\, M_\vd .
	\]
	The $ C^*$-algebra generated by $ \mathcal T_\vd $ is $M_{\vd}.$ Note that 
	$ \mathcal T_d \,=\,
	{\rm span} \{I,J, \ldots,J^{\vd-1},J^*,\ldots,(J^*)^{\vd-1}\},$ where 
	\[
	J \, =\, 
	\begin{bmatrix}
		0&1&0&\cdots&0\\
		0&0&1&\cdots&0\\
		\vdots& &\ddots&\ddots&\vdots\\
		0&0&\cdots&0&1\\
		0&0&\cdots&0&0
	\end{bmatrix}
	\,\in\, M_\vd.
	\]

	\smallskip
	
	Let $C(\mathbb T)$ denote the space of all continuous functions on the unit
	circle. We regard $C(\mathbb T)$ as a subalgebra of
	$\mathcal B(L^2(\mathbb T))$ by identifying $f\in C(\mathbb T)$ with the
	multiplication operator $M_f$ on $L^2(\mathbb T)$. We shall simply write $f$
	for $M_f$ whenever no confusion can arise.
	
	\smallskip
	
	For $k\in\mathbb Z$, let $e_k(z)=z^k,$ $z \in \mathbb T.$ Then $\{e_k:k\in\mathbb Z\}$ is an
	orthonormal basis for $L^2(\mathbb T)$. Let $P_{\vd}$ denote the orthogonal
	projection onto
	$  H_{\vd}  := \operatorname{span}\{e_1,e_1,\ldots,e_{\vd}\}.$
	For $f\in C (\mathbb T)$, the compression $P_{\vd}fP_{\vd}$ acts on
	$H_{\vd}$. With respect to the basis $e_1,e_2,\ldots,e_{\vd}$, it is
	represented by the $\vd\times \vd$ Toeplitz matrix
	\[
	\begin{bmatrix}
		\widehat f(0) & \widehat f(-1)  & \cdots & \widehat f(-\vd+1) \\
		\widehat f(1) & \widehat f(0) & \cdots & \widehat f(-\vd+2) \\
		\vdots & \vdots & \ddots  & \vdots \\
		\widehat f(\vd-1) & \widehat f(\vd-2) & \cdots & \widehat f(0)
	\end{bmatrix}.
	\]
	Equivalently, since $J=P_{\vd}M_{\overline z}P_{\vd}$, we can also write
	\[
	P_{\vd}fP_{\vd}
	=
	\widehat f(0)I
	+
	\sum_{k=1}^{\vd-1}\widehat f(-k)J^k
	+
	\sum_{k=1}^{\vd-1}\widehat f(k)(J^*)^k .
	\]
	Thus the finite Toeplitz operator system $\mathcal T_{\vd}\subseteq M_{\vd}$
	may be realized as
	$  \mathcal T_{\vd}
	=
	\{P_{\vd}fP_{\vd}:f\in C(\mathbb T)\}.$
	Indeed, every $\vd\times\vd$ Toeplitz matrix arises in this way by choosing a
	trigonometric polynomial $f$ of degree at most $\vd-1$ whose Fourier coefficients agree with the entries
	on the relevant diagonals. Such spectral truncations have been studied in
	detail in \cite{CS21, Hekkelman22, W}.
	
	\subsection{Operator systems and completely positive maps}
	Let $\mathcal S$ be an operator system. We write
	$\operatorname{UCP}(\mathcal S,M_n)$ for the set of all the UCP
	maps from $\mathcal S$ to $M_n$. 
	For CP maps $\psi,\varphi:\mathcal S\to M_n$, we write $\psi\leq_{\mathrm{cp}}\varphi$ if $\varphi-\psi$ is CP.
	
	\smallskip
	
	The following extension result of Arveson \cite[p.~180]{Arveson69}
	(see also \cite[Theorem B]{Farenick00}) is one of the main technical inputs in
	our characterization of pure UCP maps. It allows us to pass from pure UCP maps
	on an operator system to pure UCP maps on the generated $C^*$-algebra, thereby
	providing a necessary condition for purity in the operator system setting.
	
	\begin{theorem*}[Pure extention theorem]
		Let $\mathcal S \subseteq \mathcal A = C^*(\mathcal S)$ be an operator system. If $\varphi:\mathcal S \to M_n$ is a pure UCP map, then $\varphi$ admits a pure UCP extension $\tilde{\varphi}:\mathcal A\to M_n$.
	\end{theorem*}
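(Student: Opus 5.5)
The plan is to obtain the pure extension as an extreme point of the set of all UCP extensions of $\varphi$, and then to show that extremality in this set already forces purity in the cone of CP maps on $\mathcal A$. Let
\[
E_\varphi \,=\,\{\psi\in \operatorname{UCP}(\mathcal A,M_n):\ \psi|_{\mathcal S}=\varphi\}.
\]
By Arveson's extension theorem, $E_\varphi$ is nonempty. It is clearly convex. It is also compact in the point-norm (BW) topology: $\operatorname{UCP}(\mathcal A,M_n)$ is compact there, since all such maps have norm $1$ and $M_n$ is finite dimensional, and the restriction condition is closed. By the Krein--Milman theorem, $E_\varphi$ therefore has an extreme point $\psi$. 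This $\psi$ is the candidate $\tilde\varphi$.

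Next I will show that $\psi$ is pure. It is nonzero because it is unital. Let $\theta$ be a CP map on $\mathcal A$ with $\theta\leq_{\mathrm{cp}}\psi$. Restricting to $\mathcal S$ gives $\theta|_{\mathcal S}\leq_{\mathrm{cp}}\varphi$, since the restriction of a CP map is CP. Purity of $\varphi$ then gives $\theta|_{\mathcal S}=t\varphi$ for some $t\in[0,1]$, and in particular $\theta(1)=tI_n$. I will now split into three cases.

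If $t=0$, then $\theta(1)=0$, and since $\|\theta\|=\|\theta(1)\|$ for CP maps, $\theta=0=t\psi$. If $t=1$, then $\psi-\theta$ is CP with $(\psi-\theta)(1)=0$, so by the same norm identity $\theta=\psi$. If $0<t<1$, I write
\[
\psi \,=\, t\cdot\frac{\theta}{t} \,+\, (1-t)\cdot\frac{\psi-\theta}{1-t}.
\]
Both maps $\theta/t$ and $(\psi-\theta)/(1-t)$ are CP and unital. On $\mathcal S$ they restrict to $\varphi$, because $(\psi-\theta)|_{\mathcal S}=(1-t)\varphi$. Hence both lie in $E_\varphi$. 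Extremality of $\psi$ in $E_\varphi$ forces $\theta/t=\psi$, that is, $\theta=t\psi$. In every case $\theta$ is a scalar multiple of $\psi$, so $\psi$ is pure.

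There is no serious obstacle in this argument. The only points needing care are the compactness of $E_\varphi$, which requires the right topology and finite-dimensionality of the range so that Krein--Milman applies, and the observation that the normalization $\theta(1)=tI_n$ is inherited from $\mathcal S$ because $1\in\mathcal S$. It is exactly this normalization that lets the two pieces of the decomposition lie in $E_\varphi$.
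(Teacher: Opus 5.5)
Your proof is correct. An extreme point $\psi$ of the BW-compact convex set of UCP extensions of $\varphi$ is pure, because purity of $\varphi$ forces any $\theta\leq_{\mathrm{cp}}\psi$ to satisfy $\theta|_{\mathcal S}=t\varphi$, hence $\theta(1)=tI_n$, so both pieces of $\psi=t\cdot\frac{\theta}{t}+(1-t)\cdot\frac{\psi-\theta}{1-t}$ are again UCP extensions of $\varphi$.

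The paper gives no proof of its own; it quotes the result from Arveson (1969, p.~180) and Farenick (Theorem~B), and your argument is essentially the standard face argument behind those references. One minor point: finite-dimensionality of $M_n$ is not actually needed for the compactness step, since BW-compactness of the UCP maps holds with values in any $\mathcal B(\mathcal H)$.
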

	
	In our setting, the operator system under consideration is the finite Toeplitz
	system $\mathcal T_{\vd}$, and $C^*(\mathcal T_{\vd})=M_{\vd}$. Pure UCP maps
	$M_{\vd}\to M_n$ have a particularly concrete form, which follows as an
	elementary consequence of Stinespring dilation theory.
	
	\begin{lemma}\label{lem:pureucp}
		Let $\varphi:M_{\vd} \to M_n$ be a UCP map. Then $\varphi$ is pure
		if and only if there exists an isometry
		$ V:\mathbb C^n\to \mathbb C^{\vd} $
		such that
		\[
		\varphi(T) \ =\ V^*TV,\qquad T\in M_{\vd}.
		\]
		In particular, a pure UCP map $M_{\vd}\to M_n$ can exist only if
		$n\leq \vd$.
	\end{lemma}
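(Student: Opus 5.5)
The plan is to use the minimal Stinespring dilation and the standard fact that the CP maps dominated by a CP map are parametrized by positive operators in the commutant of the dilation. Every representation of $M_{\vd}$ on a finite-dimensional space is unitarily equivalent to an amplification $T\mapsto T\otimes I_m$ on $\mathbb C^{\vd}\otimes\mathbb C^m$. So the minimal Stinespring dilation of a UCP map $\varphi:M_{\vd}\to M_n$ can be written as
\[
\varphi(T)=W^*(T\otimes I_m)W,\qquad W:\mathbb C^n\to\mathbb C^{\vd}\otimes\mathbb C^m \text{ an isometry},
\]
and minimality means that $(M_{\vd}\otimes I_m)W\mathbb C^n$ is all of $\mathbb C^{\vd}\otimes\mathbb C^m$. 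The commutant of $M_{\vd}\otimes I_m$ is $I_{\vd}\otimes M_m$. By the Radon--Nikodym theorem for CP maps (Arveson), the CP maps $\psi\leq_{\mathrm{cp}}\varphi$ are exactly
\[
\psi(T)=W^*(T\otimes F)W,\qquad F\in M_m,\quad 0\leq F\leq I,
\]
and the correspondence $F\mapsto\psi$ is injective, by minimality.

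For the ``only if'' direction, suppose $\varphi$ is pure. Then every such $\psi$ is a scalar multiple of $\varphi$. Injectivity then forces every $F$ with $0\le F\le I$ to be a scalar, so $m=1$. In that case $\mathbb C^{\vd}\otimes\mathbb C^m=\mathbb C^{\vd}$ and $W=V$ is an isometry $\mathbb C^n\to\mathbb C^{\vd}$ with $\varphi(T)=V^*TV$. Because $V$ is an isometry, we get $n\le\vd$, which gives the final assertion of the lemma.

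For the ``if'' direction, let $\varphi(T)=V^*TV$ with $V$ an isometry. The identity representation of $M_{\vd}$ on $\mathbb C^{\vd}$ is irreducible, and $V\neq 0$, so $M_{\vd}V\mathbb C^n=\mathbb C^{\vd}$. Hence $(\mathrm{id},V)$ is a minimal Stinespring dilation. Its commutant is $\mathbb C I$, so by the Radon--Nikodym description every dominated $\psi$ has the form $t\,V^*TV=t\varphi$ with $t\in[0,1]$. Thus $\varphi$ is pure.

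The only step with real content is the Radon--Nikodym parametrization, together with the injectivity coming from minimality. I will quote it from Arveson's work rather than reprove it. If a self-contained argument is preferred, the direct route is as follows. Given $\psi\le_{\mathrm{cp}}\varphi$, define $F$ on $\mathbb C^{\vd}\otimes\mathbb C^m$ by
\[
\langle F(S\otimes I)W\xi,(T\otimes I)W\eta\rangle=\langle\psi(T^*S)\xi,\eta\rangle .
\]
The inequality $\psi\le\varphi$ shows that $F$ is well defined and satisfies $0\le F\le I$. A direct check shows that $F$ commutes with $M_{\vd}\otimes I_m$.
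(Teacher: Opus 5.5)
Your proof is correct and follows essentially the same Stinespring-dilation route as the paper. The paper quotes two facts: a pure CP map on a $C^*$-algebra has an irreducible minimal Stinespring representation, and the identity representation is the only irreducible representation of $M_{\vd}$. You instead derive exactly the instance you need from Arveson's Radon--Nikodym theorem, by writing the minimal dilation as $T\mapsto T\otimes I_m$ and noting that the commutant $I_{\vd}\otimes M_m$ contains non-scalar positive contractions unless $m=1$.
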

	
	\begin{proof}
		The $C^{*}$-algebra $ M_\vd$ has, up to unitary equivalence, only one irreducible representation, namely the identity representation on $ \mathbb C^\vd$. A pure CP map on a $C^{*}$-algebra has irreducible minimal Stinespring representation. Hence, if $ \varphi $ is pure, its minimal Stinespring representation is unitarily equivalent to the identity representation of $ M_\vd$, and therefore
		\[
		\varphi(T) \,=\, V^* T V,\qquad (T \in M_\vd),
		\]
		for some operator	$ V:\mathbb C^n\to\mathbb C^\vd. $
		Since $\varphi$ is unital,
		$ I_n \,=\, \varphi(I_\vd) \,=\, V^*V. $
		Thus $ V $ is an isometry.
		
		\smallskip
		
		Conversely, if	$ \varphi(T) = V^* T V $ with $ V^*V = I_n $, then the Stinespring representation is the identity representation of $ M_\vd$, which is irreducible and minimal. Hence $ \varphi$ is a pure map.
	\end{proof}
	
	The next result is another fundamental tool used throughout the paper. In our
	setting, CP maps on the finite Toeplitz system are often studied through their
	extensions to the full matrix algebra $M_{\vd}$. Choi's theorem then converts
	complete positivity of such extensions into an explicit positivity condition
	of a finite block matrix.
	
	\begin{theorem*}[Choi's theorem \cite{Choi75}]
		Let $\varphi:M_\vd \to M_n $ be a linear map, and let
		$\{E_{ij}\}_{i,j=1}^\vd$ denote the standard matrix units in $M_\vd$.
		Then $\varphi$ is CP if and only if its {\em Choi matrix}
		\[
		C_\varphi \,=\, \bigl[\varphi(E_{ij})\bigr]_{i,j=1}^\vd
		\in M_\vd(M_n)\cong M_{\vd\,n}
		\]
		is positive semidefinite.
	\end{theorem*}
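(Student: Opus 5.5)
The statement just before this point is Choi's theorem, which is classical; I will sketch the standard proof rather than anything Toeplitz-specific. The plan is to identify the Choi matrix with the image of a single positive element under the amplification $\varphi^{(\vd)} = \mathrm{id}_{M_\vd}\otimes\varphi$. For the converse, I write $\varphi$ in Kraus form, which is manifestly CP.

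\emph{Necessity.} The block matrix $E := [E_{ij}]_{i,j=1}^{\vd}\in M_\vd(M_\vd)$ satisfies $E = \xi\xi^*$, where $\xi=\sum_{i} e_i\otimes e_i\in\mathbb C^\vd\otimes\mathbb C^\vd$. Hence $E\geq 0$. If $\varphi$ is CP, then in particular $\vd$-positive, so
\[
\varphi^{(\vd)}(E) \,=\, [\varphi(E_{ij})]_{i,j=1}^{\vd} \,=\, C_\varphi \,\geq\, 0.
\]

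\emph{Sufficiency.} Suppose $C_\varphi\geq 0$ in $M_{\vd n}$. Decompose it as a sum of rank-one positives,
\[
C_\varphi \,=\, \sum_{k=1}^{r} v_k v_k^*,\qquad v_k\in\mathbb C^{\vd}\otimes\mathbb C^n,
\]
for instance from its spectral decomposition. Write each $v_k$ blockwise as $v_k=(v_k^{(1)},\dots,v_k^{(\vd)})$ with $v_k^{(i)}\in\mathbb C^n$. Define $V_k:\mathbb C^n\to\mathbb C^\vd$ to be the operator whose adjoint $V_k^*:\mathbb C^\vd\to\mathbb C^n$ sends $e_i\mapsto v_k^{(i)}$. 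A direct computation on matrix units then gives
\[
\sum_k V_k^*E_{ij}V_k \,=\, \sum_k v_k^{(i)}\bigl(v_k^{(j)}\bigr)^* \,=\, \varphi(E_{ij}).
\]
By linearity, $\varphi(T)=\sum_k V_k^*TV_k$ for all $T\in M_\vd$. Each map $T\mapsto V_k^*TV_k$ is CP, since its amplification to $M_m(M_\vd)$ is conjugation by $I_m\otimes V_k$, which preserves positivity. A sum of CP maps is CP, so $\varphi$ is CP.

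The only step needing care is the index bookkeeping in the sufficiency direction. One has to check that $V_k^*e_i = v_k^{(i)}$ gives $V_k^*E_{ij}V_k = V_k^*e_ie_j^*V_k = v_k^{(i)}(v_k^{(j)})^*$. This is exactly the $(i,j)$ block of $v_kv_k^*$, so the sum over $k$ recovers the $(i,j)$ block of $C_\varphi$, namely $\varphi(E_{ij})$. Everything else is routine.
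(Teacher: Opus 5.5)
Your proof is correct: $[E_{ij}]=\xi\xi^*\succeq 0$ gives necessity, and for sufficiency a rank-one decomposition of $C_\varphi$ produces the Kraus form $\varphi(T)=\sum_k V_k^*TV_k$, with the block bookkeeping $V_k^*E_{ij}V_k=v_k^{(i)}\bigl(v_k^{(j)}\bigr)^*$ checked correctly. The paper gives no proof of this statement and simply cites Choi; your argument is the standard one, and the Gram/Kraus factorization of the Choi matrix it produces is the same device the paper uses in Lemma~\ref{lem:bijection} and Proposition~\ref{prop:cp-density}.
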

	
	For background on operator systems, CP maps,
	$C^*$-algebras, and their representations, we refer the reader to
	\cite{David25,Paulsen}.

	\subsection{Positive matrix-valued trigonometric polynomials}
	Let $\mathbb T=\{z\in\mathbb C: |z|=1\}$. Let
	$P:\mathbb T\to M_n$ be a matrix-valued trigonometric polynomial of the form
	\begin{equation} \label{eq:trig poly}
		P(z)\,=\,
		\sum_{k=-\vd}^{\vd} A_k z^k, \qquad A_k\in M_n.
	\end{equation}
	We say that $P$ is {\em positive} if $P(z)\succeq 0$ for all
	$z\in\mathbb T$, where, for an operator $T$ on a Hilbert space $\mathcal H$,
	the notation $T\succeq 0$ means that $T$ is positive semidefinite. We say that $P$ has degree at most $\vd$ if it is of the form \eqref{eq:trig poly}. If $P$ is not identically zero, its degree is the largest integer $|k|$ for
	which $A_k\neq 0$.
	
	\smallskip
	
	A positive matrix-valued trigonometric polynomial $P$ is said to be
	{\em normalized} if its constant coefficient is $I_n$. Equivalently,
	\[
	\int_{\mathbb T} P(z)\,dm(z)=I_n,
	\]
	where $dm$ denotes the normalized arc length measure on $\mathbb T$.
	
	\smallskip
	
	The following result is one of the central tools of the paper. The scalar-valued Fej\'er--Riesz factorization asserts that every non-negative scalar-valued trigonometric polynomial of degree at most $\vd$ can be written as the modulus square of an analytic polynomial of degree at most $\vd$. Its matrix-valued and operator-valued extensions, due in particular to Rosenblum \cite{Rosenblum68}, play a fundamental role in Toeplitz operator theory and factorization theory. For a modern account and further perspectives on the operator Fej\'er--Riesz factorization, see \cite{DR10}. We shall use the matrix-valued version repeatedly to
	pass from positive matrix-valued trigonometric polynomials to analytic
	polynomial factorizations.
	
	\begin{theorem*}[Matrix-valued Fej\'er--Riesz theorem]
		Let $P$ be an $n\times n$ matrix-valued positive trigonometric polynomial of degree atmost $\vd$. Then 
		\[
		P(z) \,=\,
		Q(z)^*Q(z),
		\]
		where $Q$ is an analytic matrix polynomial of degree at most $\vd$.
	\end{theorem*}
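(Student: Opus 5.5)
I would follow the Schur-complement proof of the operator Fej\'er--Riesz theorem due to Dritschel and Rovnyak (see \cite{DR10}). It works directly with the block Toeplitz operator of $P$ and yields the degree bound without any outer-function theory.

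Write $P(z)=\sum_{k=-\vd}^{\vd}A_kz^k$, with $A_{-k}=A_k^*$ and $A_k=0$ for $|k|>\vd$. Let $T=[A_{j-i}]_{i,j\ge 0}$ be the block Toeplitz operator on $\ell^2(\mathbb Z_{\ge 0};\mathbb C^n)$. Since $P(z)\succeq 0$ on $\mathbb T$, we have
\[
\langle Th,h\rangle=\int_{\mathbb T}\langle P(z)\hat h(z),\hat h(z)\rangle\,dm(z)\ \ge 0 ,
\]
so $T$ is positive. Let $S$ be the forward shift. The Toeplitz structure is exactly $S^*TS=T$. Moreover, $T$ is banded: it is determined by its upper-left $(\vd+1)\times(\vd+1)$ block corner. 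It also satisfies the recursion
\[
T=\begin{pmatrix}A_0 & R\\ R^* & T\end{pmatrix}
\]
after splitting off the first coordinate $\mathbb C^n$, where $R=(A_1,\dots,A_\vd,0,\dots)$.

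Let $\mathcal K=\mathbb C^n\otimes\mathbb C^{\vd+1}$ be the span of the first $\vd+1$ block coordinates, and let $M$ be the Schur complement of $T$ on $\mathcal K$. Concretely, $M$ is the largest positive $X$ on $\mathcal K$ with $X\oplus 0\le T$. The plan is to show that $M$ has rank at most $n$. Then $M=H^*H$ with $H=(Q_0\ Q_1\ \cdots\ Q_\vd)$ an $n\times n(\vd+1)$ matrix, and we set $Q(z)=\sum_{k=0}^{\vd}Q_kz^k$. To prove the rank bound, I will compare $M$ with the Schur complement $M'$ of $T$ on the first $\vd$ block coordinates. Using $S^*TS=T$ and the bandedness (the coordinates beyond index $\vd$ interact with the first block only through zeros), one shows the identity
\[
M=\begin{pmatrix} * & *\\ * & M'\end{pmatrix}
\qquad\text{with}\qquad
M-\bigl(0\oplus M'\bigr)\ \text{supported on an $n$-dimensional piece.}
\]
Stationarity also identifies $M'$ with the compression of $M$ to its first $\vd$ block coordinates. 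Iterating this decomposition shows that $T-S^{*}(T)S$-type differences are rank $\le n$. It then yields
\[
T=\sum_{k\ge 0}S^{k}\bigl(E^*H^*HE\bigr)S^{*k},
\]
where $E$ is the compression to $\mathcal K$. Reading off the block entries gives $A_j=\sum_k Q_k^*Q_{k+j}$, which is exactly the coefficient identity for $P(z)=Q(z)^*Q(z)$ on $\mathbb T$.

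The main obstacle is the rank-$\le n$ statement for the Schur complement, equivalently the precise Schur-complement identity relating $M$ and $M'$. This is where the Toeplitz structure (shift invariance) and the finite bandwidth must be combined carefully. I would first prove the finite-section version: replace $T$ by its $N\times N$ block truncation $T_N$, take Schur complements onto the first $\vd+1$ blocks, and use the Cholesky/LDL factorization of $T_N$. Banded positive matrices have banded Cholesky factors, which gives the rank bound at level $N$. Then I would let $N\to\infty$, using that these Schur complements are monotone decreasing in $N$ and hence converge. The rank bound and the coefficient identities pass to the limit because everything lives in the fixed finite-dimensional space $\mathcal K$.
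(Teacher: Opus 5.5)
\textbf{There is a genuine gap.} The paper does not prove this statement; it quotes it from Rosenblum and Dritschel--Rovnyak. The Dritschel--Rovnyak Schur-complement route you chose is a sensible one, but the claim your whole plan rests on is false.

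\textbf{The rank claim fails.} The Schur complement $M$ of $T$ on the first $\vd+1$ blocks need \emph{not} have rank at most $n$. Take $n=1$, $\vd=1$ and $P(z)=|1+z|^2=z^{-1}+2+z$. The Schur complement on the first block is $M'=1$. This follows from finite sections ($M'_N=(N+1)/N$), or from $\inf_g\|(1+z)(1+zg)\|_2^2=1$ since $1+z$ is outer. The corner identity (i) below then gives
\[
M=\begin{pmatrix}2&1\\1&1\end{pmatrix},
\]
which is invertible, so no $1\times 2$ row $H$ satisfies $M=H^*H$. The finite-section complements $M_N$ (which do decrease to $M$) satisfy $\det M_N=(N+1)/(N-1)$, so they are invertible for every $N$. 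Your Cholesky step is therefore aimed at a bound that is false at every level.

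\textbf{The same example refutes your other claims.} The compression of $M$ to its first block is $2\neq M'$. The difference $M-(0\oplus M')=\begin{pmatrix}2&1\\1&0\end{pmatrix}$ is indefinite. Finally, $\sum_{k\ge0}S^k(E^*H^*HE)S^{*k}$ cannot equal $T$: its $(0,0)$ entry is $Q_0^*Q_0$, not $A_0=\sum_kQ_k^*Q_k$, because one-sided shifts truncate the sums near the corner.

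\textbf{What the argument actually needs} is two \emph{different} identities.

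(i) Shift invariance plus bandedness. The cross term between $h_0$ and the tail is $\langle\sum_{j=1}^{\vd}A_jh_j,h_0\rangle$, which involves only coordinates inside $\mathcal K$. So the infimum over $\mathcal K^\perp$ passes to the shifted problem, and
\[
M=\begin{pmatrix}A_0&R_\vd\\ R_\vd^*&M'\end{pmatrix},\qquad R_\vd=(A_1,\dots,A_\vd),
\]
with $M'$ sitting in the \emph{last} $\vd$ blocks.

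(ii) Transitivity of Schur complements. $M'$ is the Schur complement (not the compression) of $M$ onto its \emph{first} $\vd$ blocks. Hence, writing $C$ for the last diagonal $n\times n$ block of $M$ and $M_{21}=C^{1/2}X$,
\[
M-(M'\oplus0)=\begin{pmatrix}X^*\\ C^{1/2}\end{pmatrix}\begin{pmatrix}X& C^{1/2}\end{pmatrix}.
\]
This difference has rank at most $n$. It is this difference, not $M$, that equals $H^*H$ with $H=(Q_0\ \cdots\ Q_\vd)$.

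\textbf{Finishing the proof.} Compare entries of the two expressions for $M$, extending $M'$ by zero at index $\vd$. This gives
\[
M'_{i,j}=M'_{i+1,j+1}+Q_{i+1}^*Q_{j+1},\qquad A_j=M'_{0,j}+Q_0^*Q_j .
\]
Telescoping yields $A_j=\sum_{k=0}^{\vd-j}Q_k^*Q_{k+j}$, that is, $P=Q^*Q$ with $\deg Q\le\vd$. In the example this gives $M-(M'\oplus0)=\begin{pmatrix}1&1\\1&1\end{pmatrix}$ and $Q(z)=1+z$. No finite-section limit is needed.
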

	
	\subsection{Matricial quantum Connes--Kantorovich metrics} \label{ssec:GHconvergence} 
	
	Set
	\[
	\mathcal X_{\vd,n} \,:=\, {\rm PureUCP} (\mathcal T_{\vd},M_n),
	\qquad 
	\mathcal Y_{\vd,n}\,:=\, {\rm UCP} (\mathcal T_{\vd}, M_{n}),
	\qquad
	\mathcal Y_n 
	\,:=\,
	\operatorname{UCP}(C(\mathbb T),M_n).
	\]
	The set $\mathcal Y_{\vd,n}$ can be equipped with a metric induced by the
	matricial Connes distance formula. It is defined by
	\[
	\rho_{\vd,n}(\varphi,\psi)
	\,:=\,
	\sup\left\{
	\|\varphi(T)-\psi(T)\| \ :\ 
	T\in \mathcal T_{\vd},\ \left\|[D_{\vd},T] \right\|\leq 1
	\right\},
	\]
	where
	$  D_{\vd}:=\operatorname{diag}(1,\ldots,\vd) $
	and
	$  [D_{\vd},T]=D_{\vd}T-TD_{\vd}$
	denotes the commutator. We shall denote this seminorm by $L_{\vd}.$ We also define the matricial Monge--Kantorovich metric on $\mathcal Y_{n}$ by 
	\begin{equation} \label{eq:M-Kmetric}
		\rho_{n}(\varphi,\psi)
		\,:=\,
		\sup\left\{
		\|\varphi(f)-\psi(f)\| \ :\ 
		f\in C^{1}(\mathbb T),\ \left\|[D,f] \right\|\leq 1
		\right\},
	\end{equation}
	where $D=-i\frac{d}{dt}$ is the Dirac operator on $L^2(\mathbb T)$, with domain
	consisting of continuously differentiable functions $C^{1}(\mathbb T)$. The Connes Lip-norm associated with $D$ is defined as
	\[
	\|[D, f]\| \,:= \, \|[D,M_f]\|  \,=\, D M_{f} - M_{f} D,
	\]
	whenever the commutator $[D,M_f]$ extends to a bounded operator on
	$L^2(\mathbb T)$. For $f\in C^{1}(\mathbb T)$, we have
	$     [D,M_f] \,=\, M_{-i f'}, $
	and therefore
	\[
	\| [ D, f] \| \,=\, \|[D,M_f]\| \,=\, \|f'\|_\infty.
	\]
	We shall simply write ${\rm Lip}(f)$ for $\|[D,f]\|$.
	
	\smallskip
	
	The standard Connes distance formula and the Monge--Kantorovich metric are originally formulated on state spaces. Since we work at the matrix level, we use their matricial analogues on spaces of UCP maps. Such matricial metrics were studied by Kerr in \cite{Kerr03}.
	
	\smallskip
	
	Both $L_{\vd}$ and ${\rm Lip}$ are Lip-norms in the sense of \cite[Definition 2.1]{Rieffel04}. Indeed, the metric induced by ${\rm Lip}$ on the state space of $C(\mathbb T)$ induces the weak$^*$ topology, and $L_{\vd}$ is the corresponding finite-dimensional truncation of the Lipschitz seminorm. Therefore, by \cite[Proposition 2.12]{Kerr03}, the associated matricial metrics induce the point-norm topology on the corresponding matrix state spaces. Thus the metric spaces $(\mathcal Y_{\vd,n},\rho_{\vd,n})$ and $(\mathcal X_n,\rho_n)$ fit into the framework of matricial quantum metric spaces.
	
	\subsection{Gromov--Hausdorff distance}
	Let $(X,d_X)$ be a metric space. For a subset $S$ in a metric space $X$, denote the $r$-neighborhood of $S$ by $U_r(S)$, i.e.,
	\[
	U_r(S) = \bigcup_{x \in S} B_r(x)
	\]
	where $B_r(x)$ is the open ball of radius $r$ centered at $x$.
	
	\begin{definition}
		The Hausdorff distance between two subsets $A$ and $B$ of a metric space $(X,d_X)$ is defined as 
		\[
		d_H(A, B) \,=\, \inf \{\, r > 0 \ :\  A \subseteq U_r(B) \text{ and } B \subseteq U_r(A) \,\}.
		\]
	\end{definition}
	
	\begin{definition}
		Let $(X, d_X)$ and $(Y,d_Y)$ be two metric spaces. The Gromov--Hausdorff 
		distance between $X$ and $Y,$ to be denoted as $d_{GH} (X,Y),$ is defined as the infimum of all $r > 0$ such that there exists a metric space $Z$ with subsets $X_1, Y_1 \subseteq Z$ isometric to $X$ and $Y$, respectively, with $d_H(X_1, Y_1) < r$, where $d_H(X_1, Y_1)$ is the Hausdorff distance between $X_1$ and $Y_1$. 
	\end{definition}
	
	A useful formula to calculate the Gromov--Hausdorff distance is given in \cite[Chapter 7]{Burago}. To state that formula, we need the following definitions.
	
	\begin{definition}
		Let $X$ and $Y$ be two sets. A correspondence between $X$ and $Y$ is a set $R \subseteq X \times Y$ such that for every $x \in X$ there exists at least one $y \in Y$ with $(x, y) \in R$ and similarly for every $y \in Y$ there exists an $x \in X$ with $(x, y) \in R$.
	\end{definition}
	
	\begin{definition}
		Let $R$ be a correspondence between metric spaces $(X, d_X)$ and $(Y, d_Y)$. The distortion of $R$ is defined by
		\[
		\operatorname{dis} R \,=\, \sup \left\{\, |d_X(x,x') - d_Y(y,y')| \ :\  (x,y), (x', y') \in R \,\right\}.
		\]
	\end{definition}

	\begin{theorem}
		For any two metric spaces $X$ and $Y$,
		\[
		d_{GH}(X,Y) \,=\,  \frac{1}{2}\, \inf_R \, (\operatorname{dis} R),
		\]
		where the infimum is taken over all correspondences $R$ between $X$ and $Y$.
	\end{theorem}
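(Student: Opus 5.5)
We prove the two inequalities $d_{GH}(X,Y)\le \frac12\inf_R\operatorname{dis}R$ and $d_{GH}(X,Y)\ge\frac12\inf_R\operatorname{dis}R$ separately. In each direction we convert one kind of data into the other: a correspondence of small distortion gives a common metric space in which $X$ and $Y$ are Hausdorff-close, and a Hausdorff-close embedding gives a correspondence of small distortion.

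\emph{Lower bound.} Suppose $r>0$ and there are a metric space $Z$ and isometric copies $X_1,Y_1\subseteq Z$ of $X,Y$ with $d_H(X_1,Y_1)<r$. Identify $X,Y$ with $X_1,Y_1$, and set
\[
R=\{(x,y)\in X\times Y:\ d_Z(x,y)<r\}.
\]
Since $X\subseteq U_r(Y)$ and $Y\subseteq U_r(X)$, every $x$ has a partner $y$ with $(x,y)\in R$, and vice versa. So $R$ is a correspondence. For $(x,y),(x',y')\in R$, the triangle inequality in $Z$ gives
\[
|d_X(x,x')-d_Y(y,y')|\le d_Z(x,y)+d_Z(x',y')<2r .
\]
Hence $\operatorname{dis}R\le 2r$. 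Taking the infimum over all admissible $r$ yields $\frac12\inf_R\operatorname{dis}R\le d_{GH}(X,Y)$.

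\emph{Upper bound.} Let $R$ be a correspondence with $\operatorname{dis}R<\infty$; otherwise there is nothing to prove. Fix any $r>\operatorname{dis}R$ with $r>0$, which handles the case $\operatorname{dis}R=0$. On the disjoint union $Z=X\sqcup Y$, keep $d_X$ on $X$ and $d_Y$ on $Y$, and for $x\in X$, $y\in Y$ set
\[
d(x,y)=d(y,x)=\inf_{(x',y')\in R}\bigl(d_X(x,x')+\tfrac r2+d_Y(y',y)\bigr).
\]
Then $d(x,y)\ge r/2>0$, so points of $X$ and $Y$ are never identified. If $d$ is a metric, then $X$ and $Y$ sit isometrically in $Z$. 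Moreover, for $(x,y)\in R$ we get $d(x,y)\le r/2$, so $d_H(X,Y)\le r/2$. Letting $r\downarrow\operatorname{dis}R$ and then taking the infimum over $R$ gives the upper bound.

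\emph{Main obstacle: the triangle inequality for $d$.} Triangles with all three vertices in one space are immediate. A triangle with two vertices $x,x''$ in $X$ and the third vertex in $Y$ requires two checks.
\begin{enumerate}[leftmargin=*, label=(\roman*)]
\item Inequalities of the form $d(x,y)\le d_X(x,x'')+d(x'',y)$ follow by absorbing $d_X(x,x'')$ into the infimum, using $d_X(x,x')\le d_X(x,x'')+d_X(x'',x')$.
\item The nontrivial case is $d_X(x,x'')\le d(x,y)+d(y,x'')$. Pick near-optimal pairs $(x_1,y_1),(x_2,y_2)\in R$ for the two infima. Then
\[
d(x,y)+d(y,x'')\gtrsim d_X(x,x_1)+d_Y(y_1,y_2)+r+d_X(x_2,x'').
\]
By the distortion bound, $d_X(x_1,x_2)\le d_Y(y_1,y_2)+\operatorname{dis}R\le d_Y(y_1,y_2)+r$. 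The triangle inequality in $X$ then gives the claim up to an arbitrary $\varepsilon$.
\end{enumerate}
The case with two vertices in $Y$ is symmetric. This is exactly where the constant $r/2$, with $r\ge\operatorname{dis}R$, is needed. Combining both directions proves $d_{GH}(X,Y)=\frac12\inf_R\operatorname{dis}R$, with the value $+\infty$ allowed on both sides.
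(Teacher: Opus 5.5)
Your proof is correct. The paper proves nothing itself for this statement and only cites Burago--Burago--Ivanov, Chapter 7; your argument is the standard one given there. For the lower bound you use the correspondence $\{(x,y): d_Z(x,y)<r\}$. For the upper bound you glue $X\sqcup Y$ along $R$ and check the triangle inequality using the distortion bound. The one cosmetic difference is that you glue with $r/2$ for $r>\operatorname{dis}R$ rather than with $\tfrac12\operatorname{dis}R$, so you get a genuine metric rather than a semi-metric, which fits the paper's definition of $d_{GH}$ through metric spaces $Z$ directly.
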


	\section{UCP Maps via Polynomial Densities}\label{sec:polynomial density}
	Let	$ \varphi:\mathcal T_\vd\to M_n $
	be a linear map. Define an  $n \times n$ matrix-valued trigonometric
	polynomial
	\begin{equation} \label{eq:P_phi}
		P_\varphi(z)
		\,=\,
		\varphi(I)
		+
		\sum_{k=1}^{\vd-1}\varphi(J^k)z^k
		+
		\sum_{k=1}^{\vd-1}\varphi((J^*)^k)z^{-k},
		\qquad (z\in\mathbb T).
	\end{equation}
	Let $C(\mathbb{T})_{(\vd-1)}^{(n)}$ be the vector space of $n \times n$ matrix-valued trigonometric polynomials of degree at most $\vd-1$ and $\mathcal{L}(\mathcal{T}_{\vd}, M_{n})$ be the vector space of all linear maps $ \varphi:\mathcal T_\vd\to M_n $. 
	
	\smallskip
	
	The following result is closely related to the complete order isomorphism
	between $\mathcal T_{\vd}$ and the dual of $C(\mathbb T)_{\vd-1}^{(1)}$
	established in \cite[Theorem 1.1]{Farenick21} (see also \cite[Proposition 4.6]{CS21}). We include a self-contained
	proof in our notation, based on Arveson's extension theorem, the matrix-valued
	Fej\'er--Riesz factorization and Choi's theorem.

	\begin{lemma} \label{lem:bijection}
		The map
		\[
		\Phi_{n}: \mathcal{L}(\mathcal{T}_{\vd}, M_{n}) \to C(\mathbb{T})_{(\vd-1)}^{(n)}; \qquad \varphi \mapsto P_{\varphi}
		\]
		is a linear bijective map such that $ \varphi \in \mathcal{L}(\mathcal{T}_{\vd}, M_{n}) $ is CP if and only if $P_{\varphi} \in C(\mathbb{T})_{(\vd-1)}^{(n)} $ is positive on the unit circle. Moreover, $\varphi$ is UCP if and only if $P_{\varphi}$ is normalized. 
	\end{lemma}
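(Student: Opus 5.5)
Linearity and bijectivity come first. The map $\varphi\mapsto P_\varphi$ is linear because each coefficient of $P_\varphi$ is the value of $\varphi$ on a fixed element of $\mathcal T_{\vd}$. The matrices $I, J,\dots,J^{\vd-1}, J^*,\dots,(J^*)^{\vd-1}$ form a basis of $\mathcal T_{\vd}$. Hence a linear map is determined by, and can be freely prescribed on, these $2\vd-1$ elements. The coefficients $A_k$ of an arbitrary element of $C(\mathbb T)^{(n)}_{(\vd-1)}$ prescribe exactly these values, so $\Phi_n$ is bijective. The unital statement is also immediate: the constant coefficient of $P_\varphi$ is $\varphi(I)$, so $\varphi(I)=I_n$ if and only if $P_\varphi$ is normalized. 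What remains is the equivalence between complete positivity and positivity of $P_\varphi$.

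For ``CP $\Rightarrow$ $P_\varphi\succeq0$'', extend $\varphi$ by Arveson's extension theorem to a CP map $\tilde\varphi:M_{\vd}\to M_n$. Fix $z\in\mathbb T$ and put $v_z=(1,z,\dots,z^{\vd-1})^T\in\mathbb C^{\vd}$, so that $v_zv_z^*=[z^{i-j}]_{i,j}$. This rank-one matrix is Toeplitz, and expanding it in the basis gives
\[
v_zv_z^*=I+\sum_{k\ge1}\bar z^{k}J^k+\sum_{k\ge1}z^{k}(J^*)^k
\]
(with the convention that $J^k$ sits on the $k$-th superdiagonal). Applying $\varphi$ yields $\varphi(v_zv_z^*)=P_\varphi(\bar z)$ or $P_\varphi(z)$, depending on the orientation convention. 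Either way $P_\varphi$ is positive at every point of $\mathbb T$, since $v_zv_z^*\succeq0$ and $\varphi$ is positive. No extension is actually needed here, because $v_zv_z^*\in\mathcal T_{\vd}$.

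For the converse, which is the main step, assume $P_\varphi\succeq0$ on $\mathbb T$. By the matrix-valued Fej\'er--Riesz theorem, write $P_\varphi(z)=Q(z)^*Q(z)$ with $Q(z)=\sum_{j=0}^{\vd-1}B_jz^j$ and $B_j\in M_n$. Comparing coefficients gives $\varphi(J^k)$ and $\varphi((J^*)^k)$ as sums $\sum_j B_{j}^*B_{j+k}$ (or the reversed index pattern). Next define the block column $V=(B_0;B_1;\dots;B_{\vd-1})$, viewed as a map $\mathbb C^n\to\mathbb C^{\vd}\otimes\mathbb C^n$, and the CP map $\psi(T)=V^*(T\otimes I_n)V$ on $M_{\vd}$. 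Its Choi matrix is $[B_i^*B_j]_{i,j}\succeq0$, so Choi's theorem confirms that $\psi$ is CP; alternatively this follows directly from its Stinespring form. It then remains to check that $\psi(J^k)=\sum_i B_i^*B_{i+k}$ matches the corresponding coefficient of $P_\varphi$, so that $\psi|_{\mathcal T_{\vd}}=\varphi$ by the injectivity of $\Phi_n$. Restrictions of CP maps are CP, so $\varphi$ is CP.

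The main obstacle I expect is the index and orientation bookkeeping: the conventions relating $J$, $z$ versus $\bar z$, and $Q^*Q$ versus $QQ^*$ must align. If the ordering clashes, I will replace $Q(z)$ by $z^{\vd-1}Q(\bar z)$, or reverse the order of the blocks in $V$, which transposes the roles of super- and subdiagonals.
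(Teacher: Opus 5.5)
Your proof is correct. The bijectivity and unitality parts, and the direction $P_\varphi\succeq 0\Rightarrow$ CP, are essentially the paper's argument: a Fej\'er--Riesz factor $Q=\sum_j B_jz^j$ gives the CP map $E_{ij}\mapsto B_i^*B_j$ on $M_{\vd}$, which restricts to $\varphi$. The bookkeeping you worried about causes no trouble. With the paper's conventions, $\psi(J^k)=\sum_i B_i^*B_{i+k}$ is exactly the $z^k$-coefficient of $Q^*Q$, so no reversal of blocks is needed. The genuine difference is in the direction CP $\Rightarrow P_\varphi\succeq0$. The paper extends $\varphi$ to $M_{\vd}$ by Arveson's theorem, Gram-factors the positive Choi matrix as $[Q_i^*Q_j]$, and checks coefficient by coefficient that $P_\varphi=Q^*Q$. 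You instead evaluate $\varphi$ on the positive rank-one Toeplitz matrices $v_zv_z^*\in\mathcal T_{\vd}$ and read off $\varphi(v_zv_z^*)=P_\varphi(\bar z)\succeq0$; your $v_z$ is the paper's $\eta(\bar z)$. Your route needs neither the extension theorem nor Choi's theorem, and it uses only positivity of $\varphi$. Combined with the converse, it therefore shows that every positive map $\mathcal T_{\vd}\to M_n$ is automatically completely positive, a fact the paper's proof does not make visible. The paper's route, in return, produces from each CP extension an explicit factorization $P_\varphi=Q^*Q$ whose coefficient Gram matrix is the Choi matrix of that extension. This is precisely the extension-to-factorization correspondence that is reused in Section~\ref{sec:unique-extension} for the unique extension criterion.
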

	
	\begin{proof}
		The injectivity of $\Phi_{n}$ is immediate from the definition. The surjectivity follows from the fact that 
		\[ 
		{\rm dim}\, \mathcal{L}(\mathcal{T}_{\vd}, M_{n}) 
		\,=\,
		(2 \vd-1) n^{2} 
		\,=\,
		{\rm dim}\, C(\mathbb{T})_{(\vd-1)}^{(n)}.
		\]
		
		For the remaining part of the assertion, let $ \varphi \in \mathcal{L}(\mathcal{T}_{\vd}, M_{n})$ be a CP map. By the Arveson extension theorem, $ \varphi$ extends to a CP map $ \tilde{\varphi}:M_\vd\to M_n. $ Since $ \tilde{\varphi}$ is CP, its Choi matrix
		$ C_{\tilde{\varphi}} = [\tilde{\varphi}(E_{ij})]_{i,j=0}^{\vd-1} \in M_\vd(M_n) $
		is positive. Thus there exist  matrices $ Q_0,Q_1,\ldots,Q_{\vd-1} $ of suitable size such that
		\[
		\tilde{\varphi} (E_{ij}) 
		\, =\, 
		Q_i^*Q_j
		\qquad
		0\leq i,j\leq \vd-1.
		\]
		This is just a Gram factorization of the positive block matrix 
		$ [\tilde{\varphi}(E_{ij})]_{i,j=0}^{\vd-1}.$
		Define
		\[
		Q(z) \,=\, Q_0 +Q_1z+\cdots+Q_{\vd-1}z^{\vd-1}.
		\]
		Then
		$ Q(z)^*Q(z) = \sum_{i,j=0}^{\vd-1}Q_i^*Q_jz^{j-i}. $
		We now compute the Fourier coefficients of $ Q(z)^*Q(z).$ First, the constant coefficient is
		\[
		\sum_{i=0}^{\vd-1}Q_i^*Q_i
		\,=\,
		\sum_{i=0}^{\vd-1}\tilde{\varphi}(E_{ii})
		\,=\,
		\tilde{\varphi}(I)
		\,=\,
		\varphi(I).
		\]
		Next, for $ 1\leq k\leq \vd-1$, the coefficient of $z^k$ in $ Q(z)^*Q(z)$ is 
		$ \sum_{i=0}^{\vd-1-k} Q_i^*Q_{i+k}. $
		Using
		$ J^k = \sum_{i=0}^{\vd-1-k}E_{i,i+k}, $
		we get
		\[
		\sum_{i=0}^{\vd-1-k}Q_i^*Q_{i+k}
		\,=\,
		\sum_{i=0}^{\vd-1-k}\tilde{\varphi}(E_{i,i+k})
		\,=\,
		\tilde{\varphi}(J^k)
		\,=\,
		\varphi(J^k).
		\]
		Similarly, the coefficient of $z^{-k}$ is
		\[
		\sum_{i=0}^{\vd-1-k}Q_{i+k}^*Q_i
		\,=\,
		(\tilde{\varphi}(J^k))^{*}
		\,=\,
		\varphi((J^*)^k).
		\]
		Therefore
		\[
		Q(z)^*Q(z)
		\,=\,
		\varphi(I)
		+
		\sum_{k=1}^{\vd-1}\varphi(J^k)z^k
		+
		\sum_{k=1}^{\vd-1}\varphi((J^*)^k)z^{-k}
		\,=\,
		P_\varphi(z).
		\]
		Thus
		$ P_\varphi(z) \, =\, Q(z)^*Q(z)\geq 0 $
		for all $ z\in\mathbb T$.
		
		\smallskip
		
		Conversely, assume that
		$ P_\varphi(z)\geq 0 $ for all 
		$ z\in\mathbb T$ for some $\varphi \in \mathcal L (\mathcal T_{\vd} , M_{n}).$
		By the matrix-valued Fej\'er--Riesz factorization, there exist matrices
		$ R_0,R_1,\ldots,R_{\vd-1} $ of a common suitable size
		such that 
		$P_\varphi(z)\,=\,R(z)^*R(z), $
		where
		$ R(z) = R_0+R_1z+\cdots+R_{\vd-1}z^{\vd-1}. $
		Thus
		$ P_\varphi(z)
		=
		\sum_{i,j=0}^{\vd-1}R_i^*R_jz^{j-i}. $ Define a linear map $\psi_{R}:M_\vd\to M_n$
		on matrix units by
		$ \psi_R(E_{ij}) \,=\, R_i^*R_j $ for 
		$ 0\leq i,j\leq \vd-1.$
		Its Choi matrix is
		\[
		C_{\psi_{R}} \,=\, [\psi_{R} (E_{ij})]_{i,j=0}^{\vd-1}
		\,=\,
		[R_i^*R_j]_{i,j=0}^{\vd-1}.
		\]
		This block matrix is positive. By Choi's theorem, $\psi_{R}$ is CP. It remains to show that
		$ \psi_{R}|_{\mathcal T_\vd} = \varphi. $
		The constant coefficient of $ P_\varphi(z)$ is $ \varphi(I).$ On the other
		hand, the constant coefficient of $ R(z)^*R(z)$ is
		$ \sum_{i=0}^{\vd-1}R_i^*R_i. $
		Therefore
		\[
		\varphi(I) \,=\, \sum_{i=0}^{\vd-1} R_i^*R_i
		\,=\,
		\sum_{i=0}^{\vd-1} \psi_R (E_{ii})
		\,=\,
		\psi_R(I).
		\]
		For $ 1\leq k\leq \vd-1 $, the coefficient of $ z^k $ in $ P_\varphi(z)$ is	$ \varphi(J^k).$
		The coefficient of $ z^k$ in $ R(z)^*R(z)$ is
		$ \sum_{i=0}^{\vd-1-k} R_i^*R_{i+k}. $
		Therefore	$ \varphi(J^k)	=	\sum_{i=0}^{\vd-1-k} R_i^*R_{i+k}. $
		But
		$ J^k = \sum_{i=0}^{\vd-1-k} E_{i,i+k}. $
		Hence
		\[
		\varphi(J^k)
		\,=\,
		\sum_{i=0}^{\vd-1-k}R_i^*R_{i+k}
		\,=\,
		\sum_{i=0}^{\vd-1-k}\psi_R(E_{i,i+k})
		\,=\,
		\psi_R(J^k).
		\]
		Similarly, comparing the coefficients of $ z^{-k}$, we get
		$ \psi_R((J^*)^k) \,=\, \varphi((J^*)^k). $
		This completes the proof. 
	\end{proof}
	
	The following proposition gives an integral representation for CP maps on $\mathcal T_{\vd}$. It shows that every CP map $\varphi:\mathcal T_{\vd}\to M_n$ is represented by the positive matrix-valued trigonometric polynomial $P_{\varphi}$ defined in \eqref{eq:P_phi}. We call this polynomial the polynomial density of $\varphi$.
	
	\begin{prop}\label{prop:cp-density}
		Let $\varphi:\mathcal T_{\vd}\to M_n$ be a CP map. Then
		\[
		\varphi(P_{\vd} f P_{\vd})
		\,=\,
		\int_{\mathbb T} f(z)  P_{\varphi} (z)\, dm(z)
		\]
		for every $f\in C(\mathbb T),$ where $P_{\varphi}$ is defined in \eqref{eq:P_phi}.
	\end{prop}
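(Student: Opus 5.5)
Both sides of the identity are linear in $f$, and both are norm-continuous in $f\in C(\mathbb T)$. Indeed, $f\mapsto P_{\vd}fP_{\vd}$ is contractive into $\mathcal T_{\vd}$, $\varphi$ is a linear map on a finite-dimensional space and hence bounded, and $P_\varphi$ is a bounded continuous matrix-valued function, so $\bigl\|\int_{\mathbb T} fP_\varphi\,dm\bigr\|\le \|f\|_\infty\,\sup_{z}\|P_\varphi(z)\|$. By density of trigonometric polynomials in $C(\mathbb T)$ (Stone--Weierstrass or Fej\'er), it therefore suffices to verify the identity for the monomials $f=e_k$, $k\in\mathbb Z$.

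For the monomials, I would first compute the left-hand side using the expansion of the compression recorded in the preliminaries,
\[
P_{\vd}fP_{\vd}=\widehat f(0)I+\sum_{k=1}^{\vd-1}\widehat f(-k)J^k+\sum_{k=1}^{\vd-1}\widehat f(k)(J^*)^k .
\]
\begin{itemize}
\item For $f=e_0$ we get $P_{\vd}e_0P_{\vd}=I$.
\item For $f=e_{-k}$ with $1\le k\le \vd-1$ we get $P_{\vd}e_{-k}P_{\vd}=J^k$.
\item For $f=e_k$ with $1\le k\le\vd-1$ we get $P_{\vd}e_kP_{\vd}=(J^*)^k$.
\item For $|k|\ge \vd$ the compression is $0$.
\end{itemize}

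Next I would compute the right-hand side. Orthonormality gives $\int_{\mathbb T} z^k z^j\,dm(z)=\delta_{k,-j}$, so from definition \eqref{eq:P_phi},
\[
\int_{\mathbb T} e_k P_\varphi\,dm
\]
equals the coefficient of $z^{-k}$ in $P_\varphi$.
\begin{itemize}
\item For $k=0$ this coefficient is $\varphi(I)$.
\item For $e_{-k}$ with $1\le k\le\vd-1$ it is $\varphi(J^k)$.
\item For $e_k$ with $1\le k\le\vd-1$ it is $\varphi((J^*)^k)$.
\item For $|k|\ge\vd$ it is $0$, since $P_\varphi$ has degree at most $\vd-1$.
\end{itemize}
These values match the left-hand side case by case.

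The only point needing care is bookkeeping: the convention $J=P_{\vd}M_{\overline z}P_{\vd}$ pairs $z^k$ in $P_\varphi$ with $J^k$, and this is exactly what makes the integral against $e_{-k}$ pick out $\varphi(J^k)$. Complete positivity is not actually used in the identity itself. It serves only to interpret the integrand, via Lemma \ref{lem:bijection}: $P_\varphi\succeq0$, so $P_\varphi\,dm$ is a positive $M_n$-valued measure, which justifies the name "density".
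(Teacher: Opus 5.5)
Your proof is correct, and it takes a genuinely different route from the paper's.

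The paper first extends $\varphi$ by Arveson's theorem to a CP map $\tilde\varphi$ on $M_{\vd}$ and writes $\tilde\varphi(T)=\sum_\ell V_\ell^*TV_\ell$ in Kraus form. It then represents the compression as $P_{\vd}fP_{\vd}=\int_{\mathbb T}f(z)\eta(z)\eta(z)^*\,dm(z)$ with $\eta(z)=(1,\overline z,\ldots,\overline z^{\vd-1})^t$, and pushes $\tilde\varphi$ through the integral to get the density $\sum_\ell V_\ell^*\eta(z)\eta(z)^*V_\ell$. Only at the end does it identify this density with $P_\varphi$, by comparing Fourier coefficients via $J^k=P_{\vd}\overline z^{k}P_{\vd}$.

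You skip the extension and Kraus steps and match coefficients directly. This is more elementary, and it shows, as you note, that the identity holds for every linear $\varphi$; complete positivity plays no role. You do not even need the reduction to monomials by continuity and density. The expansion $P_{\vd}fP_{\vd}=\widehat f(0)I+\sum_{k}\widehat f(-k)J^k+\sum_k\widehat f(k)(J^*)^k$ holds for every $f\in C(\mathbb T)$. Integrating $f$ termwise against $P_\varphi$ produces exactly the same coefficients $\widehat f(0)$, $\widehat f(-k)$, $\widehat f(k)$.

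What the paper's route buys is that the density appears in the manifestly positive sum-of-squares form $\sum_\ell(\eta(z)^*V_\ell)^*(\eta(z)^*V_\ell)$, where each $\eta(z)^*V_\ell$ is an analytic row polynomial. So positivity of $P_\varphi$ and an explicit Fej\'er--Riesz-type factorization come for free. The same $\eta\eta^*$ computation is also reused in Proposition~\ref{prop:compression-density} for compressions $T\mapsto V^*TV$. In your version, positivity of the density has to be imported separately from Lemma~\ref{lem:bijection}.
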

	
	\begin{proof}
		By Arveson's extension theorem, $\varphi$ extends to a CP map
		$\tilde \varphi:M_{\vd}\to M_n$. Hence, by the finite-dimensional Stinespring--Kraus
		representation (Choi's theorem), there exist operators
		$V_1,\ldots,V_r:\mathbb C^n\to\mathbb C^{\vd}$ such that
		\[
		\tilde \varphi (T) 
		\,=\, \sum_{\ell=1}^r V_\ell^* T V_\ell,
		\qquad T\in M_{\vd}.
		\]
		
		The $(i,j)$-entry of
		$P_{\vd} f P_{\vd}$, for $0\leq i,j\leq \vd-1$, is
		$ \widehat f(i-j) = \int_{\mathbb T} f(z)z^{j-i}\,dm(z).$
		Now define
		\[
		\eta(z) \,=\,
		\begin{pmatrix}
			1 & \overline z & \overline z^2 \cdots  \overline z^{\vd-1}
		\end{pmatrix}^{t}
		\in \mathbb C^{\vd}.
		\]
		Then the $(i,j)$-entry of $\eta(z)\eta(z)^*$ is $z^{j-i}$.
		Therefore the $(i,j)$-entry of 
		$ \int_{\mathbb T} f(z)\eta(z)\eta(z)^*\,dm(z) $
		is also $\int_{\mathbb T}f(z)z^{j-i}\,dm(z)$. Hence
		$ P_{\vd} f P_{\vd}
		\,=\,
		\int_{\mathbb T} f(z)\eta(z)\eta(z)^*\,dm(z). $
		Therefore
		\[
		\begin{aligned}
			\varphi(P_{\vd} f P_{\vd})
			& \,=\,
			\tilde \varphi(P_{\vd} f P_{\vd})                                      \\
			& \,=\,
			\sum_{\ell=1}^r V_\ell^*
			\left(\int_{\mathbb T} f(z)\eta(z)\eta(z)^*\,dm(z)\right)
			V_\ell                                                        \\
			&=
			\int_{\mathbb T} f(z)
			\left(\sum_{\ell=1}^r V_\ell^*\eta(z)\eta(z)^*V_\ell\right)
			dm(z).
		\end{aligned}
		\]
		Thus the required density is
		\[
		\widetilde P_{\varphi}(z)
		\,:=\,
		\sum_{\ell=1}^r V_\ell^*\eta(z)\eta(z)^*V_\ell .
		\]
		This is a positive $M_n$-valued trigonometric polynomial of degree at most
		$\vd-1$. We need to show that $\widetilde P_{\varphi} = P_{\varphi}.$  Write $\widetilde P_\varphi(z) = \sum_{k=-(\vd-1)}^{\vd-1} A_k z^k .$
		Since $J = P_{\vd} \overline{z} P_{\vd}$, we have $ J^k \,=\, P_{\vd} \overline z^k P_{\vd} $ for $|k| \leq \vd -1.$
		Hence
		\[
		\varphi(J^k)
		\,=\,
		\int_{\mathbb T} z^{-k} \widetilde P_\varphi(z)\,dm(z) \qquad (|k| \leq \vd-1).
		\]
		Therefore
		\[
		\varphi(J^k)
		\,=\,
		\sum_{\ell =-(\vd-1)}^{\vd-1}A_\ell
		\int_{\mathbb T} z^{\ell-k}\,dm(z)
		\,=\,
		A_k.
		\]
		Thus $\varphi(J^k)$ is the Fourier coefficient of $\widetilde P_\varphi$ corresponding to $z^k$. Therefore $\widetilde P_\varphi = P_{\varphi}.$ 
		
	\end{proof}

	\section{Characterization of pure UCP maps}\label{sec:pure-ucp}
	Let $C(\mathbb T)_{(\vd-1)}^{+}$ denote the cone of scalar-valued positive trigonometric polynomials of degree at most $\vd-1$. We begin with a lemma which characterizes those elements of this cone that span extreme rays. Under the polynomial density correspondence, Proposition \ref{prop:cp-density}, this is equivalent to characterizing pure states on $\mathcal T_{\vd}$. This result should be compared with \cite[Proposition 4.8]{CS21} and \cite[Proposition 2.12]{Hekkelman22}, where pure states on $\mathcal T_{\vd}$ are characterized. The proofs in these works implicitly contain the degree condition (see Lemma \ref{lem:extreme ray}), although it is not stated explicitly in the assertions. We give a different proof below, making the degree condition explicit. Our proof uses only the scalar Fej\'er--Riesz factorization.
	
	\begin{lemma} \label{lem:extreme ray}
		Let $ g\in\mathbb C[z] $ be a nonzero polynomial of degree at most $ \vd-1 $.
		Then the positive trigonometric polynomial
		$ |g|^2 $
		spans an extreme ray of $ C(\mathbb{T})_{(\vd-1)}^{+} $if and only if $ \deg g = \vd-1 $
		and every zero of $g$ lies on the unit circle.
	\end{lemma}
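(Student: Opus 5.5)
Both directions reduce, via the scalar Fej\'er--Riesz theorem, to statements about analytic polynomials and their zeros. Throughout, write $p=|g|^2$. Every element of $C(\mathbb T)^+_{(\vd-1)}$ has the form $|h|^2$ with $h\in\mathbb C[z]$ and $\deg h\leq \vd-1$. Moreover $p$ spans an extreme ray exactly when every decomposition $p=p_1+p_2$ with $p_i\in C(\mathbb T)^+_{(\vd-1)}$ forces each $p_i$ to be a multiple of $p$. Equivalently, every $q\in C(\mathbb T)^+_{(\vd-1)}$ with $q\le p$ on $\mathbb T$ must be a multiple of $p$.

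\emph{Necessity.} We argue by contraposition in two cases.

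Suppose first that $\deg g<\vd-1$. Then $zg$ still has degree at most $\vd-1$. The polynomials $|g+zg|^2$ and $|g-zg|^2$ both lie in the cone, and
\[
|g+zg|^2+|g-zg|^2=2|g|^2+2|zg|^2=4|g|^2 .
\]
However $|g+zg|^2=|g|^2\,|1+z|^2$ is not a multiple of $|g|^2$. So $|g|^2$ does not span an extreme ray.

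Suppose next that $g$ has a zero $a\notin\mathbb T$, and write $g=(z-a)h$. On $\mathbb T$ we have $|z-a|=|1-\bar a z|$, so $|g|^2=|(1-\bar a z)h|^2$ as well. Put $g_1=(z-a)h$ and $g_2=(1-\bar a z)h$; these are not proportional, because $|a|\ne1$. Then $|g|^2=\tfrac12|g_1|^2+\tfrac12|g_2|^2$ is trivially a convex combination, which is useless, so instead use
\[
\Bigl|\tfrac{g_1+g_2}{2}\Bigr|^2+\Bigl|\tfrac{g_1-g_2}{2}\Bigr|^2=\tfrac12\bigl(|g_1|^2+|g_2|^2\bigr)=|g|^2 .
\]
Here $g_1-g_2$ is a nonzero polynomial of degree at most $\vd-1$. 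Since $|g_1|=|g_2|$ on $\mathbb T$, we get $\operatorname{Re}(g_1\bar g_2)\le|g|^2$ there. It remains to check that $|g_1-g_2|^2$ is not a constant multiple of $|g|^2$. If $|g_1-g_2|^2=c|g|^2$, then after dividing by $|h|^2$ we would need $|(1+\bar a)(z-1)|$-type constraints; more precisely,
\[
|(z-a)-(1-\bar a z)|^2=|(1+\bar a)z-(1+a)|^2 ,
\]
and this would have to be a multiple of $|z-a|^2$. The left side vanishes at $z=(1+a)/(1+\bar a)$, which lies on $\mathbb T$, while $|z-a|^2$ has no zeros on $\mathbb T$. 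This is a contradiction unless $a=-1$, which is excluded since $|a|\ne1$. (If $1+a=0$ is impossible, then the argument is complete.) Hence $|g|^2$ is not extreme.

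\emph{Sufficiency.} Assume $\deg g=\vd-1$ and all zeros of $g$ lie on $\mathbb T$. Let $0\le q\le p$ with $q\in C(\mathbb T)^+_{(\vd-1)}$, and write $q=|h|^2$ with $\deg h\le\vd-1$ by Fej\'er--Riesz. Since $g$ has all its zeros on $\mathbb T$, we may take $g=c\prod_j(z-\zeta_j)^{m_j}$ with $|\zeta_j|=1$ and $\sum_j m_j=\vd-1$.

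Near $\zeta_j$ we have $|g(z)|^2\sim|z-\zeta_j|^{2m_j}$, and $q\le p$ forces $|h(z)|\le C|z-\zeta_j|^{m_j}$ for $z\in\mathbb T$ near $\zeta_j$. Because $h$ is analytic, its restriction to the arc vanishes to order at least $m_j$ at $\zeta_j$, so $(z-\zeta_j)^{m_j}$ divides $h$. Doing this for each $j$ gives $g\mid h$. Comparing degrees ($\deg h\le\vd-1=\deg g$) shows $h=\lambda g$, hence $q=|\lambda|^2p$. This proves extremality.

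The main obstacle is the off-circle-zero case of necessity. I will present it cleanly as follows: the two factorizations $g_1,g_2$ of the same $p$ are non-proportional, and the sum/difference identity then exhibits $p$ as a sum of two cone elements that are not multiples of $p$. The zero-location argument above is what certifies non-proportionality.
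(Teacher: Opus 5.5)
Your proposal is correct and follows the paper's proof essentially step for step. It uses the same two non-extremality decompositions (via $zg$ and via the reflected factor $(1-\bar a z)h$, combined with the parallelogram identity) and the same sufficiency argument (vanishing order at the zeros on $\mathbb T$, then a degree count). Your explicit check that $|g_1-g_2|^2$ is not a multiple of $|g|^2$, via the zero $(1+a)/(1+\bar a)\in\mathbb T$, supplies a detail the paper only asserts; when you write it up, drop the stray remarks about $\operatorname{Re}(g_1\bar g_2)$ and ``$|(1+\bar a)(z-1)|$-type constraints.''
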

	
	\begin{proof}
		First suppose $ \deg g< \vd-1.$	Then the polynomial
		$\tilde g(z) \ =\ zg(z) $
		has degree at most $\vd-1$, and
		$ |\tilde g(z)| \,=\, |g(z)| $ on $ \mathbb T. $ Hence
		\[
		|g|^2
		\,=\,
		\left|\frac{g+\tilde g}{2}\right|^2+\left|\frac{g-\tilde g}{2}\right|^2.
		\]
		The two summands are positive trigonometric polynomials of degree at most
		$\vd-1$. They are not scalar multiples of $|g|^2$, because
		$g+\tilde g$ and $g-\tilde g$ are not scalar multiples of
		$g$. Thus $|g|^2$ does not span an extreme ray.
		
		\smallskip
		
		Next suppose $g$ has a zero $\alpha\notin\mathbb T$. Write
		$ g(z)=(z-\alpha)h(z). $
		Define
		$ \tilde g(z)  = (1-\overline{\alpha}z)h(z). $
		For $z \in \mathbb T$, one has $	|z-\alpha|=|1-\overline{\alpha}z|. $
		Therefore 
		$ |\tilde g(z)|  = |g(z)|,$ for all $z \in \mathbb T.$
		Again,
		\[
		|g|^2
		\,=\,
		\left|\frac{g+\tilde g}{2}\right|^2+\left|\frac{g-\tilde g}{2}\right|^2.
		\]
		is a nontrivial decomposition inside $C(\mathbb T)_{(\vd-1)}^{+}$. Hence
		$|g|^2$ is not extreme.
		
		\smallskip
		
		Conversely, assume $\deg g= \vd-1 $ and every zero of $ g $ lies on $ \mathbb T $. Let
		$ 0 \leq  p  \leq   |g|^2 $ with $ p\in  C(\mathbb T)_{(\vd-1)}^{+} $. By the Fej\'er--Riesz factorization, there is a polynomial $ h \in \mathbb C[z]$ of degree at most $\vd-1$ such that	$ p (z)=|h(z)|^2.$
		The inequality
		$ |h(z)|^2  \leq  |g(z)|^2 $
		implies 	$ |h(z)|\leq |g(z)| $
		for all $ z \in \mathbb T$. If $ \lambda\in\mathbb T $ is a zero of $ g$ of
		multiplicity $ m$, then for $z\neq \lambda$ we get
		\[
		\frac{|h(z)|}{|z-\lambda|^m} \,\leq\, \frac{|g(z)|}{|z-\lambda|^m}
		\]
		Since the right-hand side has a finite limit as $z \in \mathbb{T}$ tends to $\lambda,$ the left-hand side remains bounded in a neighbourhood of $\lambda.$ Hence $ h$ must vanish at $ \lambda $ with multiplicity at least $ m $. Since all zeros of $ g $ lie on $ \mathbb T $, and
		$ \deg h\leq \vd-1 = \deg g,$
		it follows that 
		$	h=cg $
		for some scalar $c\in\mathbb C$. Hence
		$	p=|c|^2|g|^2. $
		Therefore the only positive trigonometric polynomials dominated by
		$|g|^2$ are scalar multiples of $|g|^2$, and so $|g|^2$ spans an
		extreme ray.
	\end{proof}

	In the proposition below, we characterize pure UCP maps $\tilde{\varphi}:M_{\vd}\to M_n$ by looking at the polynomial density of their restriction to $\mathcal T_{\vd}$. Thus the purity of a UCP map on the full matrix algebra is translated into a condition on the polynomial density associated with $\tilde{\varphi}|_{\mathcal T_{\vd}}$.

	\begin{proposition}\label{prop:compression-density}
		Let $\varphi:\mathcal T_{\vd}\to M_n$ be a UCP map and let $P_\varphi$ be its
		associated polynomial density. Then the following are equivalent.
		
		\begin{enumerate}[leftmargin=*, label=(\arabic*), font=\normalfont]\itemsep=3pt
			\item There exists an isometry $V:\mathbb C^n\to \mathbb C^{\vd}$ such that
			\[
			\varphi(T) \,=\, V^*TV, \qquad T\in \mathcal T_{\vd}.
			\]
			\item There exists a row polynomial
			\begin{equation} \label{eq:Qdef}
				Q(z) \,=\,\begin{pmatrix} q_1(z)&q_2(z)&\cdots&q_n(z)
				\end{pmatrix},
				\qquad q_j\in \mathbb C[z],\quad \deg q_j\leq \vd-1,
			\end{equation}
			such that $ P_\varphi(z)=Q(z)^*Q(z)$ for all $z\in\mathbb T$.
		\end{enumerate}
	\end{proposition}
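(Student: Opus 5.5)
The plan is to reuse the Gram-factorization dictionary from the proof of Lemma \ref{lem:bijection}, specialized to a \emph{single} Kraus operator. The key observation is that an isometry $V:\mathbb C^n\to\mathbb C^{\vd}$ and a $1\times n$ row polynomial of degree at most $\vd-1$ carry exactly the same data. Write $V$ with rows $v_0,\dots,v_{\vd-1}\in M_{1,n}$, indexed as in the paper ($0\le i\le \vd-1$), and set
\[
Q_V(z)=\sum_{i=0}^{\vd-1} v_i z^i .
\]
Conversely, any $Q(z)=\sum_i Q_i z^i$ with row coefficients $Q_i\in M_{1,n}$ determines a $\vd\times n$ matrix $V_Q$ whose $i$-th row is $Q_i$. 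These two constructions are mutually inverse.

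For $(1)\Rightarrow(2)$, extend $\varphi$ to $\tilde\varphi(T)=V^*TV$ on $M_{\vd}$. Then $\tilde\varphi(E_{ij})=V^*E_{ij}V=v_i^*v_j$, which is precisely a Gram factorization of the Choi matrix with $1\times n$ blocks $Q_i=v_i$. Repeating the coefficient computation of Lemma \ref{lem:bijection} verbatim, using $J^k=\sum_i E_{i,i+k}$, gives $Q_V(z)^*Q_V(z)=P_\varphi(z)$. Alternatively, this follows from Proposition \ref{prop:cp-density} with $r=1$: there $\widetilde P_\varphi(z)=V^*\eta(z)\eta(z)^*V$, and $\eta(z)^*V=\sum_i z^i v_i=Q_V(z)$. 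In either form, $Q_V$ is the required row polynomial.

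For $(2)\Rightarrow(1)$, take $Q$ as in \eqref{eq:Qdef} and form $V=V_Q$. Define $\psi(T)=V^*TV$ on $M_{\vd}$. The same computation, read in reverse as in the converse half of Lemma \ref{lem:bijection}, shows that the polynomial density of $\psi|_{\mathcal T_{\vd}}$ equals $Q^*Q=P_\varphi$. Injectivity of $\Phi_n$ then yields $\psi|_{\mathcal T_{\vd}}=\varphi$.

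It remains to check that $V$ is an isometry. We have $V^*V=\sum_i v_i^*v_i$, which is the constant Fourier coefficient of $Q^*Q=P_\varphi$. Since $\varphi$ is UCP, Lemma \ref{lem:bijection} says $P_\varphi$ is normalized, so this coefficient equals $\varphi(I)=I_n$. Hence $V^*V=I_n$.

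I do not expect a genuine obstacle here. The only point needing care is index bookkeeping: matching $z^{j-i}$ with $E_{ij}$ and the row/column conventions (rows $v_i$ of $V$ versus the coefficients of $Q$) so that $Q^*Q$, and not $QQ^*$ or its transpose, reproduces $P_\varphi$. I will fix the convention $\eta(z)^*V=Q(z)$ from Proposition \ref{prop:cp-density} at the outset so that both directions follow from a single identity.
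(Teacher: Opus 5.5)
Your proposal is correct and follows essentially the paper's route: you identify $V$ with the stacked coefficient rows of $Q$, so that $Q(z)=\eta(z)^*V$, and you obtain $V^*V=I_n$ from the normalization $\sum_j Q_j^*Q_j=\int_{\mathbb T}P_\varphi\,dm=I_n$. The only difference is cosmetic: the paper checks $\varphi(T)=V^*TV$ via the identity $P_{\vd}fP_{\vd}=\int_{\mathbb T}f\,\eta\eta^*\,dm$, whereas you compare the coefficients of $J^k$ and invoke injectivity of $\Phi_n$, which amounts to the same computation.
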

	\begin{proof}
		Let
		$\eta(z)= \begin{pmatrix} 1 &  \overline z & \cdots & \overline z^{\vd-1} \end{pmatrix}^{t} \in \mathbb C^{\vd}.$
		If $\varphi(T) = V^*TV$, then, using
		\[
		P_{\vd} f P_{\vd}
		\,=\,
		\int_{\mathbb T} f(z)\eta(z)\eta(z)^*\,dm(z),
		\]
		we obtain
		\[
		\varphi(P_{\vd} f P_{\vd})
		\,=\,
		V^*\left(
		\int_{\mathbb T} f(z)\eta(z)\eta(z)^*\,dm(z)
		\right)
		V 
		\,=\,
		\int_{\mathbb T} f(z)V^*\eta(z)\eta(z)^*V\,dm(z).
		\]
		Hence
		$ P_\varphi(z) = V^*\eta(z)\eta(z)^*V. $ If we put
		$ Q(z) = \eta(z)^*V, $ then $Q$ is a row polynomial of degree at most $\vd-1$, and
		$  P_\varphi(z)=Q(z)^*Q(z).$
		
		\smallskip
		
		Conversely, suppose
		\[
		P_\varphi(z) \,=\, Q(z)^*Q(z),
		\qquad
		Q(z) \,=\, Q_0+Q_1z+\cdots+Q_{\vd-1}z^{\vd-1}, \qquad Q_{j} \,\in\, M_{1,n}.
		\]
		Define
		\[
		V \,=\,
		\begin{pmatrix}
			Q_0\\
			Q_1\\
			\vdots\\
			Q_{\vd-1}
		\end{pmatrix}
		:
		\mathbb C^n\to \mathbb C^{\vd}.
		\]
		Then
		\[
		V^*V  \,=\, \sum_{j=0}^{\vd-1}Q_j^*Q_j
		\,=\,
		\int_{\mathbb T}Q(z)^*Q(z)\,dm(z)
		\,=\,
		\int_{\mathbb T} P_{\varphi} (z) \,dm(z)
		\, =\,
		I_n,
		\]
		so $V$ is an isometry. Also $Q(z)=\eta(z)^*V$.
		$  P_\varphi(z) = V^*\eta(z)\eta(z)^*V.$
		Therefore, for every $f\in C(\mathbb T)$,
		\[
		\varphi(P_{\vd} f P_{\vd})
		\,=\,
		\int_{\mathbb T}f(z)P_\varphi(z)\,dm(z)
		\,=\,
		V^* \left( \int_{\mathbb T}f(z)\eta(z)\eta(z)^*\,dm(z) \right) V
		\,=\, V^*P_{\vd} f P_{\vd}V.
		\]
		Thus $\varphi(T) = V^*TV$ for every $T\in\mathcal T_{\vd}$.
	\end{proof}
	
	\subsection{A Guiding Example}
	Lemma \ref{lem:extreme ray} and Proposition \ref{prop:compression-density} suggest a tempting but false criterion for purity of UCP maps from $\mathcal T_{\vd}$ to $M_n$. One might expect that a UCP map $\varphi:\mathcal T_{\vd}\to M_n$ is pure whenever its polynomial density admits a factorization $P_\varphi=Q^*Q$, with $Q$ as in \eqref{eq:Qdef}, such that $\max_j\deg q_j=\vd-1$ and each scalar polynomial $q_j$ has all its zeros on $\mathbb T$. The following example shows that this criterion is not necessary in the matrix-valued setting.

	\begin{example}
		Let $Q(z)=(1,z)$ and
		$ P_\varphi(z) = Q(z)^*Q(z)
		=
		\begin{pmatrix}
			1 & z\\
			z^{-1} & 1
		\end{pmatrix}.$
		Let $\varphi:\mathcal T_2\to M_2$ be the UCP map with polynomial density
		$P_\varphi$. Then $\varphi$ is pure.
	\end{example}
	\begin{proof}
		Let $\psi:\mathcal T_2\to M_2$ be CP with
		$\psi\leq_{\mathrm{cp}}\varphi$. Let $P_\psi$ be the polynomial density of
		$\psi$. Since $P_\varphi = Q^*Q$ with $Q(z)=(1,z)$, we have
		\[
		0 \,\preceq\, P_\psi(z) \,\preceq\, Q(z)^*Q(z),
		\qquad z\in\mathbb T.
		\]
		Set
		$  u(z)=
		\begin{pmatrix}
			-z\\
			1
		\end{pmatrix}.$
		Then $Q(z)u(z)=0$. Hence
		\[
		0 \,\leq\, \left\langle P_\psi(z)u(z),u(z) \right\rangle
		\,\leq\, \left\langle Q(z)^*Q(z)u(z),u(z) \right\rangle
		\,=\,
		\|Q(z)u(z)\|^2
		\,=\,
		0.
		\]
		Since $P_\psi(z)\succeq 0$, it follows that $P_\psi(z)u(z)=0$ for every
		$z\in\mathbb T$. Write
		\[
		P_\psi(z)=
		\begin{pmatrix}
			a(z) & b(z)\\
			b(z)^* & c(z)
		\end{pmatrix}.
		\]
		The equation $P_\psi(z)u(z) = 0$ gives
		$ b(z)=za(z).$
		Since $P_\psi$ has degree at most $1$, write
		\[
		a(z) \,=\, \alpha+\beta z+\overline{\beta}z^{-1}.
		\]
		Then
		\[
		b(z) \,=\, za(z) \,=\, \alpha z + \beta z^2+\overline{\beta}.
		\]
		But $b$ also has degree at most $1$, so $\beta=0$. Hence $a(z)=\alpha$ is constant.
		Consequently,
		\[
		b(z)=\alpha z,
		\qquad
		c(z)=\alpha,
		\]
		and therefore
		\[
		P_\psi(z)
		\,=\,
		\alpha
		\begin{pmatrix}
			1 & z\\
			z^{-1} & 1
		\end{pmatrix}
		\,=\,
		\alpha\, Q(z)^*Q(z).
		\]
		Since $0\preceq P_\psi\preceq P_\phi$, we have $\alpha\in[0,1]$. Thus
		$P_\psi=\alpha P_\varphi$, and hence $\psi=\alpha\varphi$. Therefore $\varphi$ is pure.
	\end{proof}

	The following proposition is the main ingredient in the proof of Theorem \ref{thmA}.
	
	\begin{prop}\label{prop:domination}
		Let $\varphi:\mathcal T_{\vd}\to M_n$ be a UCP map, and suppose that its
		polynomial density has the form
		$ P_\varphi(z)=Q(z)^*Q(z),$
		where $Q$ is as in $\eqref{eq:Qdef}.$
		Write
		\[
		Q(z)\,=\,g_\varphi(z)R(z),
		\qquad
		R(z) \,=\, \begin{pmatrix} r_1(z)&r_2(z)&\cdots&r_n(z)\end{pmatrix},
		\]
		where $g_\varphi=\gcd(q_1,\ldots,q_n)$ is chosen so that
		$\gcd(r_1,\ldots,r_n)=1$. Let
		$ \vd_r=\max_{1\leq j\leq n}\deg r_j. $
		Then a CP map $\psi:\mathcal T_{\vd}\to M_n$ satisfies
		$\psi\leq_{\mathrm{cp}}\varphi$ if and only if there exists a scalar polynomial
		$h\in\mathbb C[z]$ with $\deg h\leq \vd-1-\vd_r$ such that
		\[
		P_\psi(z) \,=\, |h(z)|^2 R(z)^*R(z),
		\qquad z\in\mathbb T,
		\]
		and
		\[
		0 \,\leq\, |h(z)|^2 \,\leq\, |g_\varphi(z)|^2,
		\qquad z\in\mathbb T.
		\]
	\end{prop}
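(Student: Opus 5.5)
By Lemma \ref{lem:bijection}, $\psi\leq_{\mathrm{cp}}\varphi$ is equivalent to $0\preceq P_\psi(z)\preceq P_\varphi(z)=|g_\varphi(z)|^2R(z)^*R(z)$ for all $z\in\mathbb T$. The reason is that $\varphi-\psi$ is CP exactly when its density $P_\varphi-P_\psi$ is positive. The ``if'' direction is then immediate: if $P_\psi=|h|^2R^*R$ with $|h|^2\leq|g_\varphi|^2$ on $\mathbb T$, then $P_\varphi-P_\psi=(|g_\varphi|^2-|h|^2)R^*R\succeq 0$ and $P_\psi\succeq0$. The degree bound $\deg h\leq \vd-1-\vd_r$ guarantees that $hR$ is a row polynomial of degree at most $\vd-1$. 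So $P_\psi$ lies in $C(\mathbb T)^{(n)}_{(\vd-1)}$ and is the density of a genuine CP map on $\mathcal T_\vd$.

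For the ``only if'' direction I will imitate the guiding example, with the kernel of $R(z)$ playing the role of $u(z)$. Fix $z\in\mathbb T$ outside the finite zero set of $g_\varphi$. If $R(z)u=0$, then $0\leq\langle P_\psi(z)u,u\rangle\leq |g_\varphi(z)|^2\|R(z)u\|^2=0$, so $P_\psi(z)u=0$. Hence $\ker P_\psi(z)\supseteq\ker R(z)$. Since $\gcd(r_j)=1$, the row $R(z)$ is nonzero at every $z$; on the other hand $P_\psi(z)$ is self-adjoint. Therefore the range of $P_\psi(z)$ lies in $\ker R(z)^\perp=\mathrm{span}\,R(z)^*$, so $P_\psi(z)=c(z)R(z)^*R(z)$ for a scalar $c(z)\geq0$. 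By continuity this extends to all of $\mathbb T$. Explicitly, choose $j$ with $r_j\not\equiv0$; then $c=(P_\psi)_{jj}/|r_j|^2$ away from the finitely many zeros of $r_j$. This shows $c$ is a rational trigonometric function.

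The main obstacle is proving that $c$ is actually a trigonometric polynomial, and here I will use $\gcd(r_1,\dots,r_n)=1$. Multiply by $z^{\vd-1}$ and extend to $\mathbb C^*$, writing $r_j^*(z)=\overline{r_j(1/\bar z)}$. Each entry then gives an identity of Laurent polynomials $(P_\psi)_{ij}=c\,r_i^*r_j$, with $c=A/B$ a rational function in lowest terms. Then $B$ divides $r_i^*r_j$ for all $i,j$ (as Laurent polynomials). A root $\beta$ of $B$ must then be a root of every $r_j$, or a root of every $r_i^*$, that is, $1/\bar\beta$ is a common root of the $r_i$. Either way this contradicts $\gcd=1$. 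So $c$ is a Laurent polynomial, it is nonnegative on $\mathbb T$, and comparing degrees with $P_\psi$ gives $\deg c\leq \vd-1-\vd_r$. The degree comparison uses the index $j$ attaining $\vd_r$, together with the diagonal entry $|r_j|^2c$, whose top coefficient cannot cancel.

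Finally, scalar Fej\'er--Riesz gives $c=|h|^2$ with $\deg h\leq \vd-1-\vd_r$. The inequality $P_\psi\preceq P_\varphi$ becomes $(|g_\varphi|^2-|h|^2)R^*R\succeq0$. Since $R(z)\neq0$, this yields $|h(z)|^2\leq|g_\varphi(z)|^2$ on $\mathbb T$, which finishes the proof.
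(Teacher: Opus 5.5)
Your overall strategy works, but the final degree comparison fails as written. The coefficient of $z^{\vd_r}$ in $|r_j|^2=\overline{r_j}\,r_j$ is $\overline{r_j(0)}$ times the leading coefficient of $r_j$. So the trigonometric degree of $|r_j|^2$ is $\vd_r-\operatorname{ord}_0(r_j)$, not $\vd_r$, and the diagonal entry $(P_\psi)_{jj}=c\,|r_j|^2$ only yields $\deg c\leq \vd-1-\vd_r+\operatorname{ord}_0(r_j)$. For example, take $R=(z,1)$. Then $\vd_r=1$ is attained at $j=1$, $|r_1|^2\equiv 1$, and $(P_\psi)_{11}=c$ gives merely $\deg c\leq \vd-1$. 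The sharp bound $\deg h\leq\vd-1-\vd_r$ is part of the statement. It is also exactly what the proof of Theorem~\ref{thmA} needs in order to apply Lemma~\ref{lem:extreme ray} in the cone $C(\mathbb T)^{+}_{(\vd-1-\vd_r)}$.

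The repair uses coprimality once more. Some $r_i$ has $r_i(0)\neq 0$, since otherwise $z$ divides every $r_i$. Take $j$ with $\deg r_j=\vd_r$ and look at the off-diagonal entry $(P_\psi)_{ij}=c\,\overline{r_i}\,r_j$. Its top frequency is exactly $\deg c+\vd_r$, with nonzero coefficient equal to (top coefficient of $c$)$\cdot\overline{r_i(0)}\cdot$(leading coefficient of $r_j$). Since $(P_\psi)_{ij}$ has degree at most $\vd-1$, this gives $\deg c\leq\vd-1-\vd_r$, and your scalar Fej\'er--Riesz step then produces the required $h$.

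With that fix, your proof takes a genuinely different route from the paper's. The paper first factors $P_\psi=F^*F$ by the matrix Fej\'er--Riesz theorem and notes that each row of $F(z)$ is pointwise proportional to $Q(z)$. It then uses a Bezout identity $\sum_j a_jr_j=1$ to write each row as $\alpha_\ell R$, where $\alpha_\ell=\sum_j a_jf_{\ell j}$ is manifestly a polynomial. Its degree bound is automatic because degrees of analytic polynomials add: $\deg\alpha_\ell+\vd_r=\deg(\alpha_\ell r_j)\leq\vd-1$.

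You work directly with $P_\psi$ instead. You get the rank-one form $P_\psi=c\,R^*R$ from the kernel argument of the guiding example. You then show the a priori rational multiplier $c$ is a Laurent polynomial by a lowest-terms divisibility argument in $\mathbb C[z,z^{-1}]$. There, coprimality has to exclude both common roots $\beta$ and reflected common roots $1/\bar\beta$, and your argument does handle both correctly.

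Your version needs only the scalar Fej\'er--Riesz theorem. The cost is that $P_\psi$ is built from products $\overline{r_i}\,r_j$, so its frequencies record only differences of exponents. That is precisely why the degree bookkeeping needs the extra care above.
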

	
	\begin{proof}
		The reverse implication is straightforward. It remains to prove the forward implication. Since $Q(z) = g_\varphi(z)R(z)$,
		we have
		\[
		P_{\varphi} (z) \,=\, Q(z)^*Q(z)
		\,=\,
		|g_\varphi(z)|^2 R(z)^*R(z).
		\]
		Since $\psi$ is CP, $P_{\psi}$ is positive. By the matrix-valued Fej\'er--Riesz factorization $P_{\psi}(z)  =  F(z)^{*}F(z),$ where $ F $ is an $m \times n $ matrix-valued polynomial of degree at most $\vd-1$. Since $ 0 \leq_{\mathrm{cp}} \psi \leq_{\mathrm{cp}} \varphi,$ we have $0 \preceq P_{\psi} \preceq P_{\varphi}$ and thus 
		\[
		0 \,\preceq\, F(z)^*F(z) \,\preceq \, Q(z)^*Q(z) \qquad (z \in \mathbb T).
		\]
		This implies that for every $ z\in\mathbb T$, the range of
		$ F(z)^*F(z) $ is contained in the range of $ Q(z)^*Q(z)$.  But 
		\[
		{\rm Range} (Q(z)^{*}Q(z)) \,=\,  {\rm Range} (Q(z)^{*}) ,
		\]
		and
		\[
		{\rm Range} (F(z)^{*}F(z)) \,=\,  {\rm Range} (F(z)^{*}) .
		\]
		Since
		$Q(z)^*Q(z)$ has rank at most one, every row of $ F(z)$ must be
		pointwise proportional to $ Q(z)$. Write a row of $ F$ as
		$ f(z)= \begin{pmatrix}
			f_1(z)&\cdots&f_n(z)
		\end{pmatrix}. $
		The fact that $f(z)$ is pointwise proportional to $Q(z)=g_\varphi(z)R(z)$
		is equivalent to
		\[
		f_i(z)r_j(z) \,=\, f_j(z)r_i(z)
		\qquad
		\text{for all }i,j.
		\]
		Since $\gcd(r_1,\ldots,r_n)=1$, there exist polynomials
		$a_1,\ldots,a_n\in \mathbb C[z]$ such that
		\[
		\sum_{j=1}^n a_j(z)r_j(z) \,=\, 1.
		\]
		Using the identities
		\[
		f_i(z)r_j(z)=f_j(z)r_i(z),
		\qquad 1\leq i,j\leq n,
		\]
		we obtain
		\[
		f_i(z)
		=
		f_i(z)\sum_{j=1}^n a_j(z)r_j(z)
		=
		\sum_{j=1}^n a_j(z)f_i(z)r_j(z)
		=
		\sum_{j=1}^n a_j(z)f_j(z)r_i(z).
		\]
		If we set
		$ \alpha(z) =  \sum_{j=1}^n a_j(z)f_j(z), $
		then
		$    f_i(z) \,=\, \alpha(z) r_i(z)$ for  $ 1\leq i\leq n.$
		Thus
		$ f(z) \,=\, \alpha(z)R(z).$
		Applying this argument to each row of $F$, we obtain
		\[
		F(z) \,=\, A(z)R(z),
		\]
		where $A$ is a column vector whose entries are scalar polynomials. Consequently,
		\[
		F(z)^*F(z)
		\,=\,
		R(z)^*A(z)^*A(z)R(z).
		\]
		Writing
		\[
		A(z) \,=\,
		\begin{pmatrix}
			\alpha_1(z)\\
			\vdots\\
			\alpha_m(z)
		\end{pmatrix},
		\]
		we have
		$ A(z)^*A(z) = \sum_{\ell=1}^m |\alpha_\ell(z)|^2.$
		By the scalar-valued Fej\'er--Riesz factorization, there exists a scalar polynomial $h \in \mathbb C[z]$ such
		that
		\[
		\sum_{\ell=1}^m |\alpha_\ell(z)|^2 \,=\, |h(z)|^2,
		\qquad z\in\mathbb T.
		\]
		Therefore
		\[
		P_{\psi} \,=\,  F(z)^*F(z)
		\,=\,
		|h(z)|^2 R(z)^*R(z).
		\]
		Since
		$  Q(z)^*Q(z)   = |g_\varphi(z)|^2 R(z)^*R(z), $
		the inequality
		$ F(z)^*F(z) \preceq Q(z)^*Q(z) $
		is equivalent to
		\[
		0 \,\leq\, |h(z)|^2 \,\leq\,  |g_\varphi(z)|^2,
		\qquad z\in\mathbb T.
		\]
		Here we use that $R(z)\neq 0$ for every $z\in\mathbb T$, which follows from
		$\gcd(r_1,\ldots,r_n)=1$.
		
		\smallskip
		
		Finally, since $\deg R=\vd_r$ and the entries of $F$ have degree at most
		$\vd-1$, each polynomial $\alpha_\ell$ has degree at most $\vd-1-\vd_r$. Hence the
		scalar Fej\'er--Riesz factor $h$ may also be chosen with
		\[
		\deg h \,\leq\, \vd-1-\vd_r.
		\]
	\end{proof}
	
	\subsection{Proof of Theorem \ref{thmA}}
	
	\begin{proof}
		Let $\varphi: \mathcal T_{\vd} \to M_{n}$ be a pure UCP map with polynomial density $P_{\varphi}.$ By pure extension theorem, there exists a pure UCP map $\tilde{\varphi} : M_{\vd} \to M_{n}$ such that $\tilde{\varphi}|_{\mathcal T_{\vd}} = \varphi.$ By Lemma \ref{lem:pureucp}, there exists an isometry $V: \mathbb C^{n} \to \mathbb{C}^{\vd}$ such that 
		\[
		\varphi (T) \,=\, V^{*}TV, \qquad T \in \mathcal T_{\vd}. 
		\]
		This forces $1 \leq n \leq \vd.$ By Proposition \ref{prop:compression-density}, there exists a row polynomial 
		\[  
		Q(z) \,=\,\begin{pmatrix} q_1(z)&q_2(z)&\cdots&q_n(z)
		\end{pmatrix},
		\qquad q_j\in \mathbb C[z],\quad \deg q_j \leq \vd-1,
		\]
		such that $P_{\varphi} = Q(z)^{*}Q(z)$ for all $z \in \mathbb T.$ Write
		\[
		Q(z)\,=\,g_\varphi(z)R(z),
		\qquad
		R(z) \,=\, \begin{pmatrix} r_1(z)&r_2(z)&\cdots&r_n(z)\end{pmatrix},
		\]
		where $g_\varphi=\gcd(q_1,\ldots,q_n)$ is chosen so that
		$\gcd(r_1,\ldots,r_n)=1$. Let $\vd_{r} = \max_{j} \deg r_{j}.$ Then $\deg g_{\varphi} \leq \vd-1- \vd_{r}.$  We claim that $|g_{\varphi}|^{2}$ spans an extreme ray of $C(\mathbb T)^{+}_{(\vd -1 - \vd_{r})}.$ Let $h\in \mathbb C[z]$ be polynomial of degree at most $\vd-1-\vd_{r}$ such that $|h(z)| \leq |g_{\varphi} (z)|$ for all $z \in \mathbb T.$ By Proposition \ref{prop:domination}, the CP map $\psi : \mathcal T_{\vd} \to M_{n}$ with polynomial density 
		\[
		P_{\psi}(z) \,=\, |h(z)|^{2} R(z)^{*}R(z), \qquad z \in \mathbb T,  
		\]
		satisfies $\psi \leq_{\rm cp} \varphi.$ Since $\varphi$ is pure, $\psi = t\, \varphi$ for some $t \in [0,1].$ Thus $|h(z)|^{2} = t\, |g_{\varphi} (z)|^{2}$ for all $z \in \mathbb T.$ This proves our claim. Lemma \ref{lem:extreme ray} now implies that
		$ \deg g_{\varphi}=\vd-1-\vd_r,$
		and every zero of $g_{\varphi}$ lies on $\mathbb T$. Since $ \deg g_{\varphi}=\vd-1-\vd_r,$ we get $\max_{j} \deg q_{j} = \vd-1.$
		
		\smallskip
		
		Conversely, assume that $\varphi: \mathcal T_{\vd} \to M_{n}$ is a UCP map with  polynomial density $P_{\varphi}$ which satisfies conditions \textup{(1)} and \textup{(2)}. Let $\psi : \mathcal T_{\vd} \to M_{n}$ be a CP map such that $  \psi \leq_{\rm cp} \varphi.$ By Proposition \ref{prop:domination}, there exists a scalar polynomial $h \in \mathbb C[z]$ with $\deg h \leq \vd-1-\vd_{r}$ such that 
		\[
		P_{\psi}(z) \,=\, |h(z)|^{2} R(z)^{*} R(z), \qquad z \in \mathbb T,
		\]
		and $|h(z)| \leq |g_{\varphi}(z)|$ for all $z \in \mathbb T.$ Since $\deg g_\varphi = \vd-1-\vd_{r}$ and all zeros of $g_{\varphi}$ lie on the circle,  by Lemma \ref{lem:extreme ray}, $|g_{\varphi}|^{2}$ spans an extreme ray of $C(\mathbb T)^{+}_{\vd-1-\vd_{r}}.$ Therefore, there exists $t \in [0,1]$ such that $|h(z)|^{2} = t\, |g_{\varphi}(z)|^{2}$ for all $z \in \mathbb T.$ This shows that $\psi = t\, \varphi.$ Hence $\varphi$ is pure. 
	\end{proof}
	
	In the following lemma, we show that the row polynomial $Q$ appearing in Theorem \ref{thmA} is unique up to multiplication by a unimodular constant.
	
	\begin{lemma}\label{lem:row-factor-unique}
		Let $\varphi: \mathcal T_{\vd} \to M_{n}$ be a UCP map. Let  
		$ Q(z)=\begin{pmatrix}q_1(z)&\cdots&q_n(z)\end{pmatrix}$
		be a row polynomial satisfying conditions \textup{(1)} and \textup{(2)} of Theorem \ref{thmA}. 
		If
		$ R(z)=\begin{pmatrix}r_1(z)&\cdots&r_n(z)\end{pmatrix} $
		is a row polynomial of degree at most \(\vd-1\) such that
		$ P_{\varphi}(z) = R(z)^*R(z) $ for all  $ z\in\mathbb T$,
		then there exists \(\lambda\in\mathbb C\) with \(|\lambda|=1\) such that
		\[
		R(z) \,=\, \lambda Q(z), \qquad z \in \mathbb T.
		\]
		In particular, if $R$ is a row polynomial with $P_{\varphi} = R^{*}R,$ then $R$ satisfies conditions \textup{(1)} and \textup{(2)} of Theorem \ref{thmA}.
	\end{lemma}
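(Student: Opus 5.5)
The plan is to reduce the statement to the divisibility argument already used in Proposition \ref{prop:domination}, and then to the zero-counting argument from the proof of Lemma \ref{lem:extreme ray}.

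First, write $Q=g_\varphi Q_0$ with $g_\varphi=\gcd(q_1,\ldots,q_n)$ and $Q_0=(s_1,\ldots,s_n)$ satisfying $\gcd(s_1,\ldots,s_n)=1$. Put $\vd_s=\max_j\deg s_j$. Conditions (1) and (2) give:
\begin{itemize}
\item $\deg g_\varphi=\vd-1-\vd_s$;
\item every zero of $g_\varphi$ lies on $\mathbb T$.
\end{itemize}
For the degree claim, note that $\max_j\deg q_j=\deg g_\varphi+\vd_s$, and this equals $\vd-1$ by (1).

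Second, exploit the identity $R(z)^*R(z)=Q(z)^*Q(z)$ on $\mathbb T$. Both sides have rank at most one, so their ranges coincide. Hence the row $R(z)$ is pointwise proportional to $Q(z)$, and therefore to $Q_0(z)$ wherever $g_\varphi(z)\neq 0$. This yields the polynomial identities $r_is_j=r_js_i$ on $\mathbb T$ minus a finite set, hence identically on $\mathbb C$.

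I then repeat the B\'ezout argument from the proof of Proposition \ref{prop:domination}. Choose polynomials $a_j$ with $\sum_j a_js_j=1$ and set $\alpha=\sum_j a_jr_j$. This gives $R=\alpha Q_0$ with $\alpha$ a scalar polynomial. Since $Q_0$ has a nonzero entry of degree $\vd_s$ and $\deg R\le \vd-1$, we get $\deg\alpha\le\vd-1-\vd_s=\deg g_\varphi$.

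Third, compare the two factorizations. From $R^*R=Q^*Q$ we get $|\alpha|^2Q_0^*Q_0=|g_\varphi|^2Q_0^*Q_0$. Since $Q_0(z)\neq 0$ for every $z$ (its entries are coprime), this forces $|\alpha(z)|=|g_\varphi(z)|$ on $\mathbb T$.

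Now apply the zero-multiplicity argument from Lemma \ref{lem:extreme ray}. Each zero $\lambda\in\mathbb T$ of $g_\varphi$ of multiplicity $m$ is a zero of $\alpha$ of multiplicity at least $m$, because $|\alpha(z)|/|z-\lambda|^m$ stays bounded as $z\to\lambda$ along $\mathbb T$. All zeros of $g_\varphi$ lie on $\mathbb T$ and $\deg\alpha\le\deg g_\varphi$, so $g_\varphi$ divides $\alpha$ and $\alpha=\lambda g_\varphi$ for a constant $\lambda$. Finally, $|\alpha|=|g_\varphi|\not\equiv 0$ on $\mathbb T$ gives $|\lambda|=1$, and hence $R=\lambda g_\varphi Q_0=\lambda Q$.

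The final assertion is then immediate. The row $\lambda Q$ has the same degrees and the same gcd as $Q$ up to the unimodular constant $\lambda$, so it still satisfies (1) and (2).

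The step I expect to need the most care is the passage from pointwise proportionality on $\mathbb T$ to a global factorization $R=\alpha Q_0$ with a degree bound on $\alpha$. The zeros of $g_\varphi$ on $\mathbb T$ are exactly where $Q(z)$ vanishes and the rank-one range argument gives no information. I will handle this by working with the coprime row $Q_0$ instead of $Q$, and by extending the polynomial identities by continuity, as above.
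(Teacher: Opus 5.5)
Your proposal is correct and follows the paper's proof essentially step for step. Both factor $Q=g_\varphi Q_0$ with $Q_0$ coprime, use the rank-one/B\'ezout argument from Proposition~\ref{prop:domination} to write $R=\alpha Q_0$ with $\deg\alpha\le \vd-1-\vd_s=\deg g_\varphi$, deduce $|\alpha|=|g_\varphi|$ on $\mathbb T$ from $Q_0(z)\neq 0$, and finish with the zero-multiplicity argument of Lemma~\ref{lem:extreme ray}. The only difference is that you re-derive $R=\alpha Q_0$ explicitly, including the points where $g_\varphi$ vanishes, instead of citing the proof of Proposition~\ref{prop:domination}; this is a harmless and slightly more careful presentation.
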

	
	\begin{proof}
		Write
		\[
		Q(z) \,=\, g(z) Q_0(z),
		\qquad
		Q_0(z) \,=\, \begin{pmatrix}q_1^0(z)&\cdots&q_n^0(z)\end{pmatrix},
		\]
		where $\gcd(q_1^0,\ldots,q_n^0)=1$. Let
		$ \vd_{0}=\max_j \deg q_j^0.$
		Since $Q$ satisfies the full-degree condition, we have
		$ \deg g+\vd_0=\vd-1.$
		Since $P_{\varphi}=R(z)^*R(z)$, the proof of Proposition \ref{prop:domination} shows that there exists an analytic polynomial $h$ of degree at most $\vd-1-\vd_0$ such that
		\[
		R(z) \,=\, h(z)Q_0(z),\qquad z\in\mathbb T.
		\]
		Now 
		\[
		P_{\varphi}(z) \,=\, Q(z)^*Q(z) \,=\, R(z)^*R(z)
		\]
		implies
		\[
		|g(z)|^2 Q_0(z)^*Q_0(z) \,=\, |h(z)|^2 Q_0(z)^*Q_0(z),\qquad z\in\mathbb T.
		\]
		Since $\gcd(q_1^0,\ldots,q_n^0)=1$, the row $Q_0(z)$ is not identically zero at any point of $\mathbb T$. Hence
		\[
		|h(z)| \,=\, |g(z)|,\qquad z\in\mathbb T.
		\]
		Since all zeros of $g$ lie on $\mathbb T$, by an argument similar to the proof of Lemma \ref{lem:extreme ray}, it follows that $h=\lambda g$ for some unimodular constant $\lambda$. This completes the proof.
	\end{proof}

	\subsection{Checkable criterion for purity}\label{ssec:checkable condition}
	Theorem \ref{thmA} gives a characterization of pure UCP maps. In particular, it immediately provides abundant examples of such maps. However, so far it is less clear whether this characterization gives a checkable criterion: given a UCP map $\varphi:\mathcal T_{\vd}\to M_n$, can one decide whether $\varphi$ is pure? At first glance, this seems nontrivial. Indeed, to show that $\varphi$ is not pure, one would have to rule out every factorization of $P_\varphi$ satisfying conditions \textup{(1)} and \textup{(2)}.
	
	\smallskip
	
	We shall show that this difficulty is only apparent: any Fej\'er--Riesz
	factorization of $P_\varphi$ contains enough information to decide whether
	$\varphi$ is pure.
	
	\smallskip
	
	Let 
	\[
	P_{\varphi}(z) = F(z)^{*} F(z), \qquad z \in \mathbb T,
	\]
	be any Fej\'er--Riesz factorization of $P_{\varphi}.$ So $F$ is an $m \times n$ matrix-valued analytic polynomial. Write
	\[
	F(z)=
	\begin{pmatrix}
		f_1(z)\\
		\vdots\\
		f_m(z)
	\end{pmatrix},
	\]
	where each \(f_\ell\) is a
	\(1\times n\) row polynomial. If $\varphi$ is pure, then it follows from the proof of Proposition \ref{prop:domination} that all rows of $F$ are scalar multiples of a
	single nonzero row polynomial. Thus we obtain a first checkable obstruction:
	if the rows of $F$ are not all scalar multiples of one nonzero row polynomial,
	then $\varphi$ is not pure.
	
	\smallskip
	
	Now choose a nonzero
	row, say $ R(z)=f_p(z).$
	Then for each \(\ell\) there is a scalar \(c_\ell\in\mathbb C\) such that
	$ f_\ell(z)=c_\ell R(z).$
	Thus
	\[
	F(z) \,=\, cR(z),
	\qquad
	c=
	\begin{pmatrix}
		c_1\\
		\vdots\\
		c_m
	\end{pmatrix}.
	\]
	Let 
	\[
	\alpha=\|c\|,
	\qquad
	h=\frac{c}{\alpha},
	\qquad
	Q(z)=\alpha R(z).
	\]
	Then \(\|h\|=1\) and $ F(z)=hQ(z).$
	Consequently
	\[
	P_\varphi(z)=F(z)^*F(z)=Q(z)^*Q(z).
	\]
	Now $Q$ is a row polynomial with $ P_{\varphi}=Q^*Q.$
	If $\varphi$ is pure, then Lemma \ref{lem:row-factor-unique} shows that
	$Q$ must satisfy conditions \textup{(1)} and \textup{(2)} of Theorem \ref{thmA}.
	Thus the purity of $\varphi$ can be checked directly from $Q$: the map
	$\varphi$ is pure if and only if
	$ \max_j \deg q_j=\vd-1 $
	and $\gcd(q_1,\ldots,q_n)$ has all its zeros on $\mathbb T$.
	
	\subsection{Purity in terms of isometry} \label{ssec:isometry}
	A pure UCP map $\varphi:\mathcal T_{\vd}\to M_n$ must be of the form
	\[
	\varphi(T)=V^*TV,\qquad T\in\mathcal T_{\vd},
	\]
	for some isometry $V:\mathbb C^n\to\mathbb C^\vd$. However, not every UCP map of this form is pure. To decide when such a map is pure, one needs a Fej\'er--Riesz factorization of $P_\varphi$. In the following lemma, we associate to the isometry $V$ a row polynomial $Q_V$ such that
	\[
	P_\varphi(z) \,=\, Q_V^* (z) Q_V(z), \qquad z \in \mathbb T .
	\]
	Thus, the purity of $\varphi$ can then be decided directly from this row polynomial.
	
	\begin{lemma}\label{lem:compression-density-factor}
		Let
		$ \varphi:\mathcal T_{\vd}\to M_n $
		be given by
		$ \varphi(T) = V^*TV,$ 
		where \(V:\mathbb C^n\to\mathbb C^{\vd}\) is an isometry. Write
		\[
		V \,=\,
		\begin{pmatrix}
			V_0\\
			V_1\\
			\vdots\\
			V_{\vd-1}
		\end{pmatrix},
		\qquad
		V_i\in M_{1,n}.
		\]
		Define the row polynomial
		$ Q(z) \,=\, \sum_{i=0}^{\vd-1}V_i z^i. $
		Then the polynomial density of \(\varphi\) is
		$ P_\varphi(z)=Q(z)^*Q(z). $
		
	\end{lemma}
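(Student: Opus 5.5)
This is essentially the first half of the proof of Proposition \ref{prop:compression-density}, made explicit. The plan is to compute the polynomial density of $\varphi$ through the integral representation of Proposition \ref{prop:cp-density} and then identify the resulting density with $Q^*Q$.

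Let $\eta(z)=\begin{pmatrix}1&\overline z&\cdots&\overline z^{\vd-1}\end{pmatrix}^t$, as in the proof of Proposition \ref{prop:cp-density}. We have $P_{\vd}fP_{\vd}=\int_{\mathbb T} f(z)\eta(z)\eta(z)^*\,dm(z)$ for every $f\in C(\mathbb T)$. Applying $\varphi(T)=V^*TV$ and pulling $V^*$ and $V$ inside the integral gives
\[
\varphi(P_{\vd}fP_{\vd})=\int_{\mathbb T} f(z)\,V^*\eta(z)\eta(z)^*V\,dm(z).
\]
The function $z\mapsto V^*\eta(z)\eta(z)^*V$ is an $M_n$-valued trigonometric polynomial of degree at most $\vd-1$. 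Taking $f=\overline z^{k}$ for $|k|\le \vd-1$, so that $P_{\vd}fP_{\vd}=J^k$, the display shows that its $k$-th Fourier coefficient equals $\varphi(J^k)$. The same identification was used at the end of the proof of Proposition \ref{prop:cp-density}. Comparing with \eqref{eq:P_phi}, we conclude that $P_\varphi(z)=V^*\eta(z)\eta(z)^*V$.

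It then remains to check that $\eta(z)^*V=Q(z)$. The $i$-th entry of $\eta(z)^*$ is $\overline{\overline z^{\,i}}=z^i$. Multiplying $\eta(z)^*$ against the block column $V$ therefore gives $\sum_{i=0}^{\vd-1}z^iV_i=Q(z)$. Hence $P_\varphi(z)=(\eta(z)^*V)^*(\eta(z)^*V)=Q(z)^*Q(z)$.

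No step is a real obstacle. The only points needing care are the conjugation convention in $\eta$, which determines whether the exponent is $z^i$ or $\overline z^{\,i}$, and matching the Fourier coefficient index $k$ with $J^k$ rather than $(J^*)^k$. Both are fixed by the conventions already established in Proposition \ref{prop:cp-density}.
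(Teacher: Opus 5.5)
Your argument is correct, but it is not the route the paper takes for this lemma.

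The paper argues purely at the level of coefficients. It uses $J^k=\sum_{i=0}^{\vd-1-k}E_{i,i+k}$ to get $\varphi(J^k)=V^*J^kV=\sum_{i=0}^{\vd-1-k}V_i^*V_{i+k}$. It then expands $Q(z)^*Q(z)=\sum_{i,j}V_i^*V_jz^{j-i}$ and matches the coefficient of $z^k$, and of $z^{-k}$ by taking adjoints.

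You instead pass through the integral identity $P_{\vd}fP_{\vd}=\int_{\mathbb T}f\,\eta\eta^*\,dm$ and extract Fourier coefficients by testing against $f=\overline z^{\,k}$. This is exactly the forward direction of the proof of Proposition~\ref{prop:compression-density}, so your observation that the lemma is already contained there is accurate.

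The two proofs encode the same identity,
\[
\eta(z)\eta(z)^* \,=\, I+\sum_{k=1}^{\vd-1}\bigl(z^kJ^k+z^{-k}(J^*)^k\bigr),
\]
so that $V^*\eta(z)\eta(z)^*V=\sum_k z^k\,V^*J^kV$.

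The paper's version is self-contained and finite-dimensional. It needs no integrals and no appeal to the realization $J^k=P_{\vd}\overline z^{\,k}P_{\vd}$, and it keeps the index bookkeeping visible. Yours is shorter given the earlier machinery, but it inherits the conventions (the conjugation in $\eta$, and $J=P_{\vd}M_{\overline z}P_{\vd}$). You correctly flag these and check them.
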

	
	\begin{proof}
		For \(0\leq k\leq \vd-1\), we have
		$ J^k=\sum_{i=0}^{\vd-1-k}E_{i,i+k}.$
		Therefore
		\[
		\varphi(J^k)
		\,=\,
		V^*J^kV
		\,=\,
		\sum_{i=0}^{\vd-1-k}V_i^*V_{i+k}.
		\]
		On the other hand,
		\[
		Q(z)^*Q(z)
		\,=\,
		\left(\sum_{i=0}^{\vd-1}V_i z^i\right)^*
		\left(\sum_{j=0}^{\vd-1}V_j z^j\right)
		\,=\,
		\sum_{i,j=0}^{\vd-1}V_i^*V_j z^{j-i}.
		\]
		Hence the coefficient of \(z^k\) in \(Q(z)^*Q(z)\) is
		\[
		\sum_{i=0}^{\vd-1-k}V_i^*V_{i+k}
		=
		\varphi(J^k),
		\]
		and the coefficient of \(z^{-k}\) is \(\varphi(J^k)^*\). This completes the proof. 
	\end{proof}
	
	\begin{remark}
		An interested reader may compare our scalar case $(n=1)$ with \cite[Proposition 2.12]{Hekkelman22} and \cite[Corollary 2.13]{Hekkelman22}. For a unit vector $\xi=(\xi_0,\ldots,\xi_{\vd-1})^{t}$, the vector state $T \mapsto \langle T \xi , \xi \rangle$ leads to two different polynomial conventions. Hekkelman's polynomial is
		\[
		Q_\xi(z) \,=\, \sum_{k=0}^{\vd-1}\xi_k z^{\vd-k-1},
		\]
		whereas Lemma \ref{lem:compression-density-factor} gives
		\[
		\widetilde{Q}_\xi(z) \,=\, \sum_{k=0}^{\vd-1}\xi_k z^k.
		\]
		The condition that $Q_\xi$ has full degree and all its zeros on $\mathbb T$ is equivalent to the same condition for $\widetilde Q_{\xi}$, and hence \cite[Proposition 2.12]{Hekkelman22} agrees with our scalar characterization. However, the density of the vector state is $|\widetilde Q_\xi|^2$, not in general $|Q_\xi|^2$; for instance, when $\vd=2$, these are $|\xi_0+\xi_1z|^2$ and $|\xi_0z+\xi_1|^2$, which need not be equal.
	\end{remark}

	\section{Unique Completely Positive Extension} \label{sec:unique-extension}
	
	For a CP map
	$	\varphi:\mathcal T_\vd\to M_n, $
	we prove that $ \varphi $ has a unique CP extension if and only if all Fej\'er--Riesz factors of its polynomial density $ P_\varphi $ has the same coefficient Gram matrix.
	
	\medskip
	
	\noindent\textbf{Fej\'er--Riesz factors and coefficient Gram matrices.}
	\begin{definition}
		A polynomial matrix
		\[
		F(z) \,=\, F_0 + F_1 z + \cdots + F_{\vd-1} z^{\vd-1},
		\qquad F_j\in M_{m,n}(\mathbb C),
		\]
		is called a Fej\'er--Riesz factor of $ P_\varphi$ if
		\[
		P_\varphi(z) \ =\ F(z)^*F(z)
		\qquad (z\in\mathbb T).
		\]
		Here the number of rows $m$ is allowed to depend on $ F$.
	\end{definition}
	
	Given such a factor $ F $, define its coefficient Gram matrix by
	\[
	G_F \, =\, [F_i^*F_j]_{i,j=0}^{\vd-1} \,\in\, M_\vd(M_n).
	\]
	Thus $ G_F$ is a $ \vd\times \vd$ block matrix whose $ (i,j) $-block is
	$ (G_F)_{i,j} = F_i^*F_j. $ The coefficient of $ z^k $ in $ F(z)^*F(z) $ is $ \sum_{r=0}^{d-1-k}F_r^*F_{r+k},$ for $ 0\leq k\leq d-1.$
	Therefore $P_\varphi=F^*F$ implies that 
	\[
	\sum_{j=0}^{\vd-1-k}F_j^*F_{j+k} \ =\ \widehat P_{\varphi}(k)
	\qquad(0\leq k\leq d-1).
	\]
	In particular, if $\varphi$ is UCP, then
	$ \sum_{j=0}^{\vd-1}F_j^*F_j \,=\, I_n.$
	
	\medskip
	
	\noindent \textbf{Extensions and Choi matrices.} A CP extension of $ \varphi $ to $ M_\vd $ is a CP map 
	$\tilde \varphi:M_\vd \to M_n$
	such that $\tilde \varphi|_{\mathcal T_{\vd}} = \varphi.$ The Choi matrix of $ \tilde\varphi $ is
	\[
	C_{\tilde\varphi} \,=\, [\tilde\varphi (E_{i,j})]_{i,j=0}^{\vd-1}\in M_\vd(M_n).
	\]
	By Choi's theorem,
	$ \tilde\varphi$ is CP if and only if 
	$ C_{\tilde\varphi} \succeq 0.$
	
	\smallskip
	
	Since
	$ J^k = \sum_{i=0}^{\vd-1-k} E_{i, i+k}, $
	the condition that $\tilde\varphi$ extends $ \varphi$ is exactly
	\[
	\sum_{i=0}^{\vd-1-k}\tilde\varphi (E_{i,i+k})
	\,=\,
	\varphi(J^k)
	\,=\,
	\widehat P_{\varphi} (k),
	\qquad 0\leq k\leq \vd-1.
	\]
	In terms of the Choi matrix $ C_{\tilde\varphi} = [C_{i,j}]$, this becomes
	\[
	\sum_{i=0}^{\vd-1-k} C_{i,i+k} \,=\, \widehat P_{\varphi}(k),
	\qquad 0\leq k\leq \vd-1.
	\]
	Thus CP extensions of $ \varphi$ are the same thing as positive block
	matrices
	$ C=[C_{r,s}]_{r,s=0}^{\vd-1}\in M_\vd(M_n) $
	satisfying
	\[
	\sum_{i=0}^{\vd-1-k} C_{i,i+k} \,=\, \widehat P_{\varphi} (k),
	\qquad 0\leq k\leq \vd-1.
	\]
	
	\medskip
	
	\noindent \textbf{Factorizations give extensions.}
	\begin{lemma} \label{lem:Factorizations give extensions}
		Let
		\[
		P_\varphi(z) \,=\, F(z)^*F(z),
		\qquad
		F(z) \,=\ \sum_{j=0}^{\vd-1}F_jz^j,
		\]
		be a Fej\'er--Riesz factor of $ P_\varphi $. Define
		$ \Psi_F:M_d\to M_n $
		on matrix units by
		\[
		\Psi_F(E_{i,j}) \,=\, F_i^*F_j,
		\qquad 0\leq i,j \leq \vd-1.
		\]
		Then $ \Psi_F $ is a CP extension of $ \varphi$, and its Choi matrix is
		$ C_{\Psi_F} = G_F = [F_i^*F_j]_{i,j=0}^{\vd-1}. $
	\end{lemma}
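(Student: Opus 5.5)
The plan is to verify two things directly from the definition of $\Psi_F$: that $\Psi_F$ is completely positive, and that it agrees with $\varphi$ on $\mathcal T_{\vd}$. The identification of the Choi matrix is then immediate. Since $\Psi_F$ is defined on the matrix units $E_{i,j}$, it extends linearly to all of $M_{\vd}$. By construction, its Choi matrix is
\[
C_{\Psi_F}=[\Psi_F(E_{i,j})]_{i,j=0}^{\vd-1}=[F_i^*F_j]_{i,j=0}^{\vd-1}=G_F .
\]

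First, positivity. Form the block column $\widehat F=(F_0,F_1,\ldots,F_{\vd-1})$, viewed as an operator from $\mathbb C^{\vd}\otimes\mathbb C^n$ to $\mathbb C^m$. Then $G_F=\widehat F^{*}\widehat F$, which is positive semidefinite. Choi's theorem then gives that $\Psi_F$ is CP. This is the same argument as in the converse part of Lemma \ref{lem:bijection}.

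Second, the extension property. We check the coefficient identities recorded just before the lemma. For $0\le k\le \vd-1$, use $J^k=\sum_{i=0}^{\vd-1-k}E_{i,i+k}$ to get
\[
\Psi_F(J^k)=\sum_{i=0}^{\vd-1-k}F_i^*F_{i+k}.
\]
This is exactly the coefficient of $z^k$ in $F(z)^*F(z)=P_\varphi(z)$, which equals $\widehat P_\varphi(k)=\varphi(J^k)$ by \eqref{eq:P_phi}. The case $k=0$ gives $\Psi_F(I)=\varphi(I)$.

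For the negative powers, note that $\Psi_F$ is $*$-preserving, since $\Psi_F(E_{j,i})=F_j^*F_i=(F_i^*F_j)^*$. Hence
\[
\Psi_F((J^*)^k)=\Psi_F(J^k)^*=\varphi(J^k)^*=\varphi((J^*)^k).
\]
The last equality holds because $\varphi$ is CP and therefore self-adjoint; it also matches the $z^{-k}$ coefficient of $P_\varphi$. Since $\mathcal T_{\vd}$ is spanned by $I$, $J^k$ and $(J^*)^k$, linearity gives $\Psi_F|_{\mathcal T_{\vd}}=\varphi$.

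No step is a serious obstacle. The only point needing care is the index bookkeeping, namely matching $\sum_i F_i^*F_{i+k}$ with the $z^k$ coefficient under the convention $P_\varphi=\sum_k \varphi(J^k)z^k$. That matching was already carried out in Lemma \ref{lem:bijection} and Lemma \ref{lem:compression-density-factor}, so we can cite it.
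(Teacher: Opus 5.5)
Your proposal is correct and follows the paper's proof essentially line by line. Complete positivity comes from the Gram matrix $[F_i^*F_j]=\widehat F^{*}\widehat F$ together with Choi's theorem; your $\widehat F$ is really a block row of size $m\times \vd n$, not a block column, but the operator you describe is the right one. The extension property comes from identifying $\sum_i F_i^*F_{i+k}$ with the $z^k$ coefficient of $F^*F=P_\varphi$. The only cosmetic difference is that you handle $(J^*)^k$ via the $*$-preserving property of $\Psi_F$ and self-adjointness of $\varphi$, whereas the paper just says ``similarly,'' meaning it compares $z^{-k}$ coefficients; both are fine.
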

	
	\begin{proof}
		The Choi matrix of $ \Psi_F $ is
		$ C_{\Psi_F} = [F_i^*F_j]_{i,j=0}^{\vd-1}.$ This is a positive block matrix, because it is a Gram matrix. Thus $ \Psi_F $ is CP by Choi's theorem.
		
		\smallskip
		
		Since $ P_\varphi(z)=F(z)^*F(z),$ the coefficient of $ z^k $ in $ P_\varphi$ is
		$ \sum_{i=0}^{\vd-1-k}F_i^*F_{i+k}.$ But this coefficient is $\widehat P_{\varphi} (k) = \varphi(J^k)$. Hence $ \sum_{i=0}^{\vd-1-k} F_i^*F_{i+k} \,=\, \widehat P_{\varphi} (k).$
		Therefore
		\[
		\Psi_F(J^k)
		\,=\,
		\Psi_F\left(\sum_{i=0}^{\vd-1-k} E_{i,i+k}\right)
		\,=\,
		\sum_{i=0}^{\vd-1-k}\Psi_F(E_{i,i+k})
		\,=\,
		\sum_{i=0}^{\vd-1-k}F_i^*F_{i+k}
		\,=\,
		\widehat P_{\varphi} (k)
		\,=\,
		\varphi(J^k).
		\]
		Similarly,
		\[
		\Psi_F((J^*)^k) \,=\, \varphi((J^*)^k).
		\]
		
		Since $ \mathcal T_{\vd}$ is spanned by $ I,J,\ldots,J^{\vd-1},J^*,\ldots,(J^*)^{\vd-1},$
		we conclude that
		$ \Psi_F|_{\mathcal T_{\vd}} = \varphi. $
		Thus $ \Psi_F $ is a CP extension of $ \varphi$.
	\end{proof}
	
	\medskip
	
	\noindent \textbf{Extensions give factorizations.}
	\begin{lemma} \label{lem:Extensions give factorizations}
		Let
		$ \tilde\varphi : M_\vd \to M_n $
		be a CP extension of $ \varphi $. Then there exists a Fej\'er--Riesz factor
		$ F(z) = \sum_{j=0}^{\vd-1}F_j z^j $ of $ P_\varphi $ such that
		\[
		\tilde \varphi(E_{i,j}) \,=\, F_i^* F_j
		\qquad
		0\leq i,j\leq \vd-1.
		\]
		Equivalently,
		$ C_{\tilde\varphi} = G_F. $
	\end{lemma}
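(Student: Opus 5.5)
The argument mirrors the first half of the proof of Lemma \ref{lem:bijection}: we Gram-factorize the Choi matrix of $\tilde\varphi$ and read off a polynomial from the Gram factors.

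First, since $\tilde\varphi$ is CP, Choi's theorem gives $C_{\tilde\varphi}=[\tilde\varphi(E_{i,j})]_{i,j=0}^{\vd-1}\succeq 0$ in $M_\vd(M_n)\cong M_{\vd n}$. Take any factorization $C_{\tilde\varphi}=X^*X$ with $X\in M_{m,\vd n}$; for instance $X=C_{\tilde\varphi}^{1/2}$ with $m=\vd n$, or a rank-revealing factor with $m=\operatorname{rank}C_{\tilde\varphi}$. Partition $X$ into $\vd$ column blocks $X=(F_0\ F_1\ \cdots\ F_{\vd-1})$ with $F_j\in M_{m,n}$. Then the $(i,j)$ block of $X^*X$ is $F_i^*F_j$, so $\tilde\varphi(E_{i,j})=F_i^*F_j$ for all $0\le i,j\le \vd-1$, which is exactly $C_{\tilde\varphi}=G_F$.

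Next, define $F(z)=\sum_{j=0}^{\vd-1}F_jz^j$ and verify that $F$ is a Fej\'er--Riesz factor of $P_\varphi$. Expanding, $F(z)^*F(z)=\sum_{i,j}F_i^*F_jz^{j-i}$. For $0\le k\le \vd-1$, the coefficient of $z^k$ is
\[
\sum_{i=0}^{\vd-1-k}F_i^*F_{i+k}=\sum_{i=0}^{\vd-1-k}\tilde\varphi(E_{i,i+k})=\tilde\varphi(J^k)=\varphi(J^k)=\widehat P_\varphi(k),
\]
where we use $J^k=\sum_{i}E_{i,i+k}$ and the fact that $\tilde\varphi$ extends $\varphi$. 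The coefficient of $z^{-k}$ is the adjoint of this, which equals $\varphi(J^k)^*=\varphi((J^*)^k)$ because CP maps are self-adjoint. Hence $F^*F$ and $P_\varphi$ have identical Fourier coefficients, and so they coincide on $\mathbb T$.

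There is no genuine obstacle here. The only points needing care are that $m$ is allowed to vary (which the definition of a Fej\'er--Riesz factor permits) and that the $z^{-k}$ coefficients are handled via $\varphi(T^*)=\varphi(T)^*$.
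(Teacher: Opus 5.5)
Your proposal is correct and follows essentially the same route as the paper: Gram-factorize the positive Choi matrix to get blocks $F_i$ with $\tilde\varphi(E_{i,j})=F_i^*F_j$, then compare Fourier coefficients of $F^*F$ with those of $P_\varphi$ using $J^k=\sum_i E_{i,i+k}$. Your explicit choice of factor (e.g.\ $C_{\tilde\varphi}^{1/2}$) and your treatment of the $z^{-k}$ coefficients via $\varphi((J^*)^k)=\varphi(J^k)^*$ fill in details the paper only sketches.
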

	
	\begin{proof}
		Let
		$ C_{\tilde\varphi} = [\tilde\varphi (E_{i,j})]_{i,j=0}^{\vd-1} $ be the Choi matrix of $\tilde\varphi$. Since $\tilde\varphi$ is CP, $ C_{\tilde\varphi} \succeq 0.$
		Therefore $C_{\tilde\varphi}$ has a Gram factorization. Hence there exist matrices
		$ F_0,\ldots,F_{\vd-1} $ of a common size such that
		\[
		\tilde\varphi (E_{i,j}) \,=\, F_i^*F_j
		\qquad
		0\leq i,j\leq \vd-1.
		\]
		
		Define
		$ F(z)=\sum_{j=0}^{\vd-1}F_jz^j.$ Then
		$ F(z)^*F(z) = \sum_{i,j=0}^{\vd-1}F_i^*F_j z^{j-i}. $
		The coefficient of $ z^k $, for $ 0\leq k\leq \vd-1 $, is
		\[
		\sum_{j=0}^{\vd-1-k}F_j^*F_{j+k}
		\,=\,
		\sum_{j=0}^{\vd-1-k}\tilde\varphi (E_{j,j+k})
		\,=\,
		\tilde\varphi (J^k).
		\]
		Since $\tilde\varphi $ extends $ \varphi $,
		$ \tilde \varphi(J^k)=\varphi(J^k)= \widehat P_{\varphi}(k). $ Thus the coefficient of $ z^k $ in $ F^*F $ agrees with the coefficient of
		$ z^k $ in $ P_\varphi $. The same argument for  $(J^*)^k $  gives agreement of the negative Fourier coefficients. Therefore
		\[
		F(z)^*F(z) \,=\, P_\varphi(z)
		\qquad(z\in\mathbb T),
		\]
		so $ F $ is a Fej\'er--Riesz factor of $ P_\varphi$. Moreover,
		\[
		C_{\tilde\varphi} \,=\, [\tilde\varphi (E_{i,j})]_{i,j=1}^{\vd-1} \,=\, [F_i^*F_j]_{i,j=1}^{\vd-1} \,=\, G_F.
		\]
	\end{proof}
	
	\subsection{Unique extention results}
	
	\begin{proposition} \label{prop:UCP}
		Let $ \varphi:\mathcal T_{\vd} \to M_n $ be CP, and let $ P_\varphi $ be its polynomial density. Then $ \varphi $ has a unique CP extension if and only if
		for every pair of Fej\'er--Riesz factors
		\[
		P_\varphi(z) \,=\, F(z)^*F(z) \,=\, H(z)^*H(z),
		\]
		where
		$ F(z) = \sum_{j=0}^{\vd-1} F_jz^j$
		and  $ H(z) \,=\, \sum_{j=0}^{\vd-1} H_r z^r,$
		one has
		\[
		[F_i^*F_j]_{i,j=0}^{\vd-1}
		\,=\,
		[H_i^*H_j]_{i,j=0}^{\vd-1}.
		\]
		Equivalently,
		$ G_F \,=\,G_H $
		for all Fej\'er--Riesz factors $F$ and $H$ of $ P_\varphi$.
	\end{proposition}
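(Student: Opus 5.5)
The plan is to combine Lemma \ref{lem:Factorizations give extensions} and Lemma \ref{lem:Extensions give factorizations}. Together they set up a correspondence between CP extensions of $\varphi$ and coefficient Gram matrices of Fej\'er--Riesz factors of $P_\varphi$. We then use Choi's theorem, which says a linear map $M_\vd\to M_n$ is determined by its Choi matrix.

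Define $\mathcal E$ to be the set of CP extensions $\tilde\varphi:M_\vd\to M_n$ of $\varphi$, and $\mathcal G=\{G_F: F \text{ a Fej\'er--Riesz factor of } P_\varphi\}$. Consider the map $\tilde\varphi\mapsto C_{\tilde\varphi}$. Since $\{E_{i,j}\}$ is a basis of $M_\vd$, two linear maps with the same Choi matrix coincide, so this map is injective on $\mathcal E$. Lemma \ref{lem:Extensions give factorizations} shows that its image lies in $\mathcal G$. Lemma \ref{lem:Factorizations give extensions} shows that every $G_F\in\mathcal G$ equals $C_{\Psi_F}$ with $\Psi_F\in\mathcal E$. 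Hence $\tilde\varphi\mapsto C_{\tilde\varphi}$ is a bijection $\mathcal E\to\mathcal G$.

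For the forward direction, assume $\varphi$ has a unique CP extension $\tilde\varphi$. Given two factors $F,H$ of $P_\varphi$, both $\Psi_F$ and $\Psi_H$ are CP extensions, so $\Psi_F=\Psi_H$. Taking Choi matrices gives $G_F=C_{\Psi_F}=C_{\Psi_H}=G_H$.

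For the converse, assume $G_F=G_H$ for all factors $F,H$. Let $\tilde\varphi_1,\tilde\varphi_2$ be CP extensions. Lemma \ref{lem:Extensions give factorizations} gives factors $F,H$ with $C_{\tilde\varphi_1}=G_F$ and $C_{\tilde\varphi_2}=G_H$. Then $C_{\tilde\varphi_1}=C_{\tilde\varphi_2}$, so $\tilde\varphi_1(E_{i,j})=\tilde\varphi_2(E_{i,j})$ for all $i,j$, and by linearity $\tilde\varphi_1=\tilde\varphi_2$. Existence of at least one extension is automatic, either from Arveson's extension theorem or from $\Psi_F$ for a factor $F$ supplied by the matrix-valued Fej\'er--Riesz theorem (degree $\le \vd-1$).

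I expect no real obstacle here. The only point needing care is that factors are allowed to have an arbitrary number $m$ of rows. $G_F$ depends only on the products $F_i^*F_j$, which are always $n\times n$, so comparing $G_F$ and $G_H$ with different row counts is legitimate. The Gram factorization in Lemma \ref{lem:Extensions give factorizations} also produces a factor of degree at most $\vd-1$, as the definition of a Fej\'er--Riesz factor requires.
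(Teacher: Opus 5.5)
Your proposal is correct and follows essentially the same route as the paper. Both directions combine Lemma \ref{lem:Factorizations give extensions} and Lemma \ref{lem:Extensions give factorizations} with the fact that a linear map on $M_\vd$ is determined by its Choi matrix; your bijection framing and your explicit remark that at least one CP extension exists (which the paper leaves implicit) are harmless additions.
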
 
	
	\begin{proof}
		First, suppose that $ \varphi $ has a unique CP extension. Let
		$ P_\varphi \,=\, F^*F \,=\, H^*H $ be two Fej\'er--Riesz factorizations. By Lemma \ref{lem:Factorizations give extensions}, $F$ defines a CP extension $\Psi_F:M_\vd\to M_n $
		of $ \varphi $, with Choi matrix
		$ C_{\Psi_F} = G_F = [F_i^*F_j]_{i,j=0}^{\vd-1}. $ Similarly, $H$ defines a UCP extension
		$ \Psi_H : M_\vd\to M_n $ of $ \varphi$, with Choi matrix
		$ C_{\Psi_H} = G_H = [H_i^*H_j]_{i,j=0}^{\vd-1}.$ Since $ \varphi$ has a unique CP extension, 
		$ \Psi_F=\Psi_H.$
		Therefore, their Choi matrices are equal
		$ G_F=G_G.$
		Hence all Fej\'er--Riesz factors of $ P_\varphi$ have the same coefficient Gram matrix.
		
		\smallskip
		
		Conversely, assume that all Fej\'er--Riesz factors of $ P_\varphi$ have the same coefficient Gram matrix. Let
		$ \tilde\varphi_{1},\tilde\varphi_2 : M_\vd\to M_n $
		be two CP extensions of $ \varphi $. By Lemma \ref{lem:Extensions give factorizations}, there exist Fej\'er--Riesz factors
		$ F(z)=\sum_{j=0}^{\vd-1}F_jz^j$
		and $	H(z)=\sum_{j=0}^{\vd-1} H_j z^j,$
		of $ P_\varphi $ such that
		$ C_{\tilde\varphi_1} = G_F = [F_i^*F_j]_{i,j=0}^{\vd-1}$
		and
		$ C_{\tilde\varphi_2} = G_H = [H_i^*H_j]_{i,j=0}^{\vd-1}.$
		By the assumed Gram rigidity,
		$ G_F=G_G.$
		Hence
		$ C_{\tilde\varphi_1} = C_{\tilde\varphi_2}.$
		Since a CP map $ M_\vd\to M_n$ is determined by its Choi
		matrix, we get
		$ \tilde\varphi_1 = \tilde\varphi_2.$ Therefore $\varphi $ has a unique CP extension to $ M_\vd.$ This proves the equivalence.
	\end{proof}
	
	\begin{remark}
		This criterion is purely polynomial. The Toeplitz data of \(\varphi\) fixes
		only the diagonal sums
		\[
		\sum_{i=0}^{\vd-1-k}F_i^*F_{i+k}
		=
		\varphi(J^k),
		\qquad 0\leq k\leq \vd-1.
		\]
		A CP extension to \(M_{\vd}\), however, is determined by the full coefficient
		Gram matrix
		$ [F_i^*F_j]_{i,j=0}^{\vd-1}.$
		Thus the unique CP extension  requires precisely that, among positive Choi
		matrices with the prescribed Toeplitz diagonal sums, there is only one
		possible full Gram matrix. For pure UCP maps this uniqueness follows from
		the Fej\'er--Riesz rigidity of the rank-one density
		$ P_\varphi=Q^*Q$ as explained in the following theorem. 
	\end{remark}
	
	\subsection{Proof of Theorem \ref{thmB}}
	
	\begin{proof}
		Let $F$ be a Fej\'er--Riesz factor of $P_{\varphi},$ that is, $F$ is a matrix valued analytic polynomial of degree at most $\vd-1$ satisfying 
		$ F(z)^*F(z) = P_\varphi(z).$
		
		\smallskip
		
		Since $\varphi$ is pure, by Theorem \ref{thmA}, there exists a row polynomial $Q $ with 
		$ P_\varphi(z) = Q(z)^*Q(z),$
		satisfying conditions \textup{(1)} and \textup{(2)} of Theorem \ref{thmA}. It follows from the discussion in Subsection \ref{ssec:checkable condition} that there exists a constant unit column vector $h_{F}$ such that $F = h_{F}Q. $ 
		
		\smallskip
		
		Write
		$ Q(z)=\sum_{i=0}^{\vd-1}Q_i z^i.$
		Then
		$ F_i = h_{F} Q_i,$ for $ 0\leq i\leq \vd-1.$
		Since $h_{F}^{*}h_{F} = 1$, we get
		\[
		F_i^*F_j
		\,=\,
		Q_i^*h_{F}^* h_{F} Q_j
		\,=\,
		Q_i^*Q_j
		\qquad
		0\leq i,j\leq \vd-1.
		\]
		It follows from Proposition \ref{prop:UCP} that $\varphi$ has a unique CP extension to $M_{\vd}.$
	\end{proof}
	
	\begin{corollary}
		Let \(V,W:\mathbb C^n\to\mathbb C^{\vd}\) be isometries, and suppose that
		\[
		V^*TV=W^*TW,\qquad T\in\mathcal T_{\vd}.
		\]
		Assume that the UCP map
		$ \varphi(T)=V^*TV $
		is pure. Then there exists \(\lambda\in\mathbb T\) such that
		$ W=\lambda V. $
	\end{corollary}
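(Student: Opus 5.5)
The corollary follows by turning each of the two isometries into a Fejér--Riesz factor of the same density and then invoking the rigidity of such factors for pure maps. Write the rows of $V$ and $W$ as $V_0,\ldots,V_{\vd-1}$ and $W_0,\ldots,W_{\vd-1}$ (each in $M_{1,n}$). Form the row polynomials
\[
Q_V(z)=\sum_{i=0}^{\vd-1}V_i z^i,\qquad Q_W(z)=\sum_{i=0}^{\vd-1}W_i z^i .
\]

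Since $V^*TV=W^*TW$ on $\mathcal T_{\vd}$, both compressions define the same UCP map $\varphi$. Lemma \ref{lem:compression-density-factor}, applied once with $V$ and once with $W$, therefore gives
\[
P_\varphi(z)=Q_V(z)^*Q_V(z)=Q_W(z)^*Q_W(z),\qquad z\in\mathbb T.
\]
So $Q_V$ and $Q_W$ are two row Fejér--Riesz factors of the density of the pure map $\varphi$.

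Next, use purity. By Theorem \ref{thmA}, some row polynomial $Q$ satisfying conditions (1) and (2) factors $P_\varphi$. Lemma \ref{lem:row-factor-unique} then yields unimodular constants $\lambda_V,\lambda_W$ with $Q_V=\lambda_V Q$ and $Q_W=\lambda_W Q$ on $\mathbb T$. Hence $Q_W=\lambda Q_V$ on $\mathbb T$ with $\lambda=\lambda_W\overline{\lambda_V}\in\mathbb T$.

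Both sides are analytic polynomials of degree at most $\vd-1$ agreeing on the infinite set $\mathbb T$, so they agree identically. Comparing coefficients gives $W_i=\lambda V_i$ for every $i$, that is, $W=\lambda V$.

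As an alternative route, Theorem \ref{thmB} gives uniqueness of the CP extension. The maps $T\mapsto V^*TV$ and $T\mapsto W^*TW$ on $M_{\vd}$ are both CP extensions of $\varphi$, so they coincide. Their Choi matrices are then equal, so $VV^*=WW^*$ entrywise ($V_i^*V_j=W_i^*W_j$). This yields $W=VU$ for some unitary $U\in M_n$, which is weaker than what we want, so I prefer the first route.

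The only step needing care is the appeal to Lemma \ref{lem:row-factor-unique}. Its hypothesis requires the second factor to be a row polynomial of degree at most $\vd-1$, and $Q_V$ and $Q_W$ meet this by construction. Everything else is bookkeeping.
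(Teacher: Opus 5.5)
Your main argument is correct, but it takes a different route from the paper. The paper deduces the corollary from Theorem~\ref{thmB}. The maps $A\mapsto V^*AV$ and $A\mapsto W^*AW$ are both UCP extensions of $\varphi$ to $M_{\vd}$, so they coincide. Uniqueness of the minimal Stinespring dilation then gives an intertwining unitary; since the identity representation of $M_{\vd}$ is irreducible, that unitary lies in the trivial commutant, which forces $W=\lambda V$.

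You instead stay at the level of densities. The coefficient rows of $Q_V$ and $Q_W$ are exactly the rows of $V$ and $W$, so applying Lemma~\ref{lem:row-factor-unique} to both factors yields $W=\lambda V$ immediately. Your route is shorter and avoids dilation theory. It also shows that the corollary is just the factor-uniqueness lemma read through the identification $V\leftrightarrow Q_V$. Theorem~\ref{thmB} is itself proved from that lemma, so the paper's route uses the same rigidity and then adds an extension step and a Stinespring step. In exchange, the paper presents the result as an instance of its unique-extension theme, and its final step uses nothing about Toeplitz structure.

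Your dismissal of the alternative route is mistaken. Equality of the Choi matrices means $V_i^*V_j=W_i^*W_j$ as $n\times n$ blocks. This is much stronger than $VV^*=WW^*$, whose entries are the scalars $V_iV_j^*$. In fact $[V_i^*V_j]_{i,j}=xx^*$, where $x\in\mathbb C^{\vd n}$ stacks the columns $V_0^*,\ldots,V_{\vd-1}^*$, so the Choi matrix has rank one. Then $xx^*=yy^*$ with $x\neq 0$ forces $y=\mu x$ with $|\mu|=1$, that is, $W=\overline{\mu}V$. So that route also gives the full conclusion. It is essentially the paper's proof, with this rank-one observation replacing the appeal to Stinespring uniqueness.
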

	
	\begin{proof}
		Since \(\varphi\) is pure, it has a unique UCP extension to
		\(M_{\vd}=C^*(\mathcal T_{\vd})\). The maps
		\[
		A\mapsto V^*AV
		\qquad\text{and}\qquad
		A\mapsto W^*AW
		\]
		are both UCP extensions of \(\varphi\) to \(M_{\vd}\). By uniqueness, they
		are equal:
		\[
		V^*AV=W^*AW,\qquad A\in M_{\vd}.
		\]
		
		Now we use the uniqueness of minimal Stinespring dilations. Both maps are obtained
		by compressing the identity representation of \(M_{\vd}\) on
		\(\mathbb C^{\vd}\). This Stinespring representation is minimal. Hence there is a unitary
		$ U:\mathbb C^{\vd}\to\mathbb C^{\vd} $
		such that $ UV=W $
		and $UA=AU$ for all $ A\in M_{\vd}.$
		But the commutant of \(M_{\vd}\) is just the scalars. 
		Therefore
		$ U=\lambda I_{\vd} $
		for some $ \lambda\in\mathbb T $. Hence
		$ W=UV=\lambda V.$
	\end{proof}
	
	\begin{remark}\label{rem:comparison}
		Theorem \ref{thmB} should be compared with the usual unique extension property in noncommutative Choquet theory. In Arveson's framework \cite{Arveson69, Arveson72, Arveson08}, and in the subsequent work of Dritschel and McCullough \cite{DM05}, maximal UCP maps are characterized by the unique extension property, where the unique extension to the generated $C^*$-algebra is a $*$-representation. This point of view plays a central role in the theory of boundary representations and the $C^*$-envelope, see also \cite{DK15, DK24, Kleski14}. Our result is of a different nature. The pure UCP maps considered here need not be restrictions of representations; rather, they are typically compressions $ T\mapsto V^*TV$. Nevertheless, in the Toeplitz setting, purity forces a unique UCP extension to $ M_\vd$, although this extension is generally not multiplicative unless the compression is trivial. Thus, the finite Toeplitz operator system exhibits a form of extension rigidity for pure matrix states which is weaker than maximality in Arveson's sense, but stronger than what holds for general hyperrigid operator systems. 
	\end{remark}

	\subsection{An example and two counter-examples} \label{ex: counter example}
	First, we give an example of a UCP map $\varphi: \mathcal T_{2} \to M_{2}$ which is not pure but has a unique UCP extension. 
	\begin{example}
		Let
		\[
		\xi \,=\, \frac1{\sqrt2}\begin{pmatrix}1\\1\end{pmatrix},
		\qquad
		\rho(T)=\langle T\xi,\xi\rangle,\quad T\in\mathcal T_2.
		\]
		Define
		$ \varphi:\mathcal T_2\to M_2,$ by 
		$ \varphi(T)=\rho(T)I_2.$
		Then $ \varphi $ is a UCP map with a unique UCP extension, but it is not pure.
	\end{example}
	\begin{proof}
		First, $ \rho $ is pure by \cite[Proposition 2.12]{Hekkelman22}. Now let
		$ \tilde\varphi:M_2\to M_2 $
		be a UCP extension of $ \varphi$. Its Choi matrix has the form
		\[
		C_{\tilde\varphi} \,=\,
		\begin{bmatrix}
			D&\frac12 I_2\\
			\frac12 I_2&I_2-D
		\end{bmatrix}
		\succeq 0,
		\]
		where $ D=\tilde\varphi (E_{11}) $. For every $ x\in\mathbb C^2 $,
		\[
		\Big\langle
		C_{\tilde\varphi}
		\begin{pmatrix}x\\-x\end{pmatrix},
		\begin{pmatrix}x\\-x\end{pmatrix}
		\Big\rangle
		\,=\,
		\left\langle Dx,x \right\rangle+ \left\langle (I_2-D)x,x \right\rangle
		- \left\langle I_2x,x \right\rangle
		\,=\,0.
		\]
		Since $ C_{\tilde\varphi} \succeq 0 $, this implies
		$ C_{\tilde\varphi}
		\begin{pmatrix}x\\-x\end{pmatrix}=0. $
		Looking at the first component gives
		$ Dx-\frac12x=0 $ for every $ x $. Hence
		$ D=\frac12 I_2. $
		Therefore
		$ C_{\tilde\varphi} =
		\frac12
		\begin{bmatrix}
			I_2&I_2\\
			I_2&I_2
		\end{bmatrix}, $
		so the Choi matrix of any UCP extension is uniquely determined. Hence
		$ \varphi $ has a unique UCP extension.
		
		\smallskip
		
		Finally, $ \varphi $ is not pure. Let
		$ P=
		\begin{bmatrix}
			1&0\\
			0&0
		\end{bmatrix}. $
		Define $ \psi:\mathcal T_2\to M_2$ by  $ \psi(T)=\rho(T)P.$
		Then $ \psi $ is CP and
		$ \varphi-\psi=\rho(\cdot)(I_2-P) $ is also CP. Thus
		$  \psi\leq_{\rm cp} \varphi.$ But $ \psi $ is not a scalar multiple of $ \varphi$, since $ \psi(I)=P $ is not a scalar multiple of $ \varphi(I)=I_2.$
		Therefore, $ \varphi $ is not pure.
	\end{proof}
	
	Now we give an example of a state on $\mathcal T_{2}$ which does not have a unique UCP extension. 
	
	\begin{example}
		Define $\varphi: \mathcal T_{2} \to \mathbb {C}$ by 
		$\varphi
		\begin{bmatrix}
			a&b\\
			c&a
		\end{bmatrix}
		=a.$
		Then $ \varphi$ is a state but it does not have a unique UCP extension. 
	\end{example}
	
	\begin{proof}
		For each $ t\in[0,1] $, define
		$ \Psi_t:M_2\to\mathbb C $
		by
		$ \Psi_t
		\begin{pmatrix}
			a&b\\
			c&d
		\end{pmatrix}
		=
		ta+(1-t)d.$
		Each $ \Psi_t $ is a state on $ M_2 $. Moreover,
		$ \Psi_t|_{\mathcal T_2} = \varphi $
		for every $ t\in[0,1]  $.
		
		\smallskip
		
		However, if $s\neq t$, then
		\[
		\Psi_t(E_{11}) \,=\, t \,\neq\, s\,=\,\Psi_s(E_{11}).
		\]
		Thus $ \Psi_t\neq \Psi_s $. Therefore, $ \varphi$ has infinitely many UCP
		extensions to $ M_2=C^*(\mathcal T_2)$, and hence $ \varphi$ does not have a unique UCP extension.
	\end{proof}
	
	Next, we give an example of a finite-dimensional hyperrigid operator system $\mathcal S \subseteq M_4$ and a pure state $\varphi:\mathcal{S} \to \mathbb{C}$ which does not have a unique UCP extension. Thus, the hyperrigidity of $ \mathcal S$ does not imply that every pure UCP map on $ \mathcal S $ has a unique UCP extension.
	
	\begin{example}
		
		Let
		$ C=\frac{1}{\sqrt2}
		\begin{bmatrix}
			1&1\\
			1&-1
		\end{bmatrix}.$
		Define two unitaries $ U,W\in M_4 $ by
		$ U =
		\begin{bmatrix}
			0&I_2\\
			C&0
		\end{bmatrix} $  and 
		$ W = 
		\operatorname{diag}(1,1,-1,i). $
		Let
		\[
		\mathcal S \,=\, \text{span}\{I,U,U^*,W,W^*\} \,\subseteq\, M_4.
		\]
		Then $\mathcal{S}$ is a hyperrigid operator system, but the vector state
		\[
		\varphi:\mathcal{S} \to \mathbb{C},
		\qquad
		\varphi(a) \,=\, \langle a e_1,e_1\rangle,
		\]
		is pure and does not have a unique UCP extension.
		
	\end{example}
	
	\begin{proof}
		The operator system is hyperrigid: First, we show that
		$ C^*(\mathcal S) = C^{*}(U,W) = M_4.$ Let $X\in M_4(\mathbb{C})$ commute with both $U$ and $W$. Since $ W=\operatorname{diag}(1,1,-1,i),$ the eigenspaces of $ W $ are $ {\rm span}\{e_1,e_2\},\, \mathbb{C} e_3,\, \mathbb{C} e_4.$ Therefore $ XW = WX$ implies that $X$ has the block form
		$ X= \begin{bmatrix}
			A&0\\
			0&B
		\end{bmatrix},$
		where $A\in M_2(\mathbb{C})$ and $B=
		\begin{bmatrix}
			b&0\\
			0&c
		\end{bmatrix}.$
		Now imposing $ XU = UX $ we get 
		$ A=B $ and $ BC = CA.$
		Since $ A=B $, this becomes
		$ BC = CB.$ But
		$ B= \begin{bmatrix}
			b&0\\
			0&c
		\end{bmatrix} $
		commutes with
		$ C=\frac1{\sqrt2}
		\begin{bmatrix}
			1&1\\
			1&-1
		\end{bmatrix} $
		if and only if $b=c$. Therefore $B=bI_2$ and hence $A=B=bI_2$. Thus $X=bI_4$.
		
		\smallskip
		
		So the commutant of $C^*(U, W)$ is only the scalars. Since $C^*(U,W)$
		is a finite-dimensional unital $^*$-subalgebra of $M_4$, it
		follows that
		$ C^*(U,W)=M_4(\mathbb{C}).$
		
		\smallskip
		
		Now we prove the hyperrigidity. Let
		$ \pi:M_4(\mathbb{C})\to \mathcal B(\mathcal H) $
		be a representation, and let
		$ \Psi:M_4 \to \mathcal B(\mathcal H) $
		be UCP such that
		$ \Psi|_\mathcal{S}=\pi|_\mathcal{S}.$
		Since $U,W\in \mathcal{S}$, we have
		\[
		\Psi(U) \,=\,\pi(U),
		\qquad
		\Psi(W) \,=\, \pi(W).
		\]
		The operators $\pi(U)$ and $\pi(W)$ are unitaries. Hence
		$ \Psi(U)^*\Psi(U)=I=\Psi(U^*U),$ and
		$ \Psi(U)\Psi(U)^*=I=\Psi(UU^*). $
		Therefore $U$ lies in the multiplicative domain of $\Psi$. Similarly, $W$ lies in the multiplicative domain of $\Psi$.
		
		\smallskip
		
		Since $U$ and $W$ generate $M_4(\mathbb{C})$, the multiplicative domain of $\Psi$ contains all of $M_4(\mathbb{C})$. Thus $\Psi$ is a $^*$-homomorphism on $M_4$. Because it agrees with $\pi$ on the generators $U$ and $W$, it follows that
		$ \Psi=\pi.$
		Hence, every representation of $M_4 $ has the unique extension property relative to $\mathcal{S}$. Thus $\mathcal{S}$ is hyperrigid.
		
		\smallskip
		
		$\varphi$ is a pure state on $\mathcal{S}$:
		Let
		\[
		h \,=\, {\rm Re}\, W=\frac{W+W^*}{2}
		=\operatorname{diag}(1,1,-1,0)\,\,
		\preceq I.
		\]
		Moreover $\varphi(h)=1.$ Consider the exposed face of the state space of $\mathcal{S}$
		\[
		F \,=\, \{\omega\in {\rm UCP}(\mathcal{S}, \mathbb C) \ :\  \omega(h)=1\}.
		\]
		We claim that
		$ F=\{\varphi\}.$
		Let $\omega\in F$. Extend $\omega$ to a state
		$ \tilde\omega:M_4(\mathbb{C})\to\mathbb{C}.$
		Since
		$ I-h = \operatorname{diag}(0,0,2,1)\succeq 0 $ and
		$ \tilde\omega(I-h)=1-\omega(h)=0,$
		the state $\tilde\omega$ is supported on the kernel of $I-h$, namely
		on the subspace ${\rm span}\{e_1,e_2\}.$ Equivalently, if $P$ denotes the projection onto ${\rm span}\{e_1,e_2\}$, then
		$ \tilde\omega(a)=\tilde\omega(PaP),$ $ a\in M_4.$
		Now
		$ PWP=P $ and $ PUP=0.$ Therefore
		$\omega(W)=1, \omega(U)=0. $
		Also
		$
		\omega(W^*)=1$ and
		$
		\omega(U^*)=0.
		$
		Since $S$ is spanned by $I,U,U^*,W,W^*$, these values determine
		$\omega$ uniquely. They are exactly the values of $\varphi$. Hence
		$ \omega=\varphi. $
		Thus
		$ F=\{\varphi\}.$
		Since $\{\varphi\}$ is an exposed face of the state space of $S$,
		$\varphi$ is an extreme point of the state space. Therefore
		$\varphi:\mathcal{S} \to \mathbb{C}$ is a pure state.
		
		\smallskip
		
		The state $\varphi$ does not have unique CP extension:
		Define two states on $M_4(\mathbb{C})$ by
		\[
		\tilde\varphi_1(a) \,=\, \langle ae_1,e_1\rangle,
		\qquad
		\tilde\varphi_2(a) \,=\, \langle ae_2,e_2\rangle.
		\]
		These are distinct states on $M_4$. However, they agree on $\mathcal{S}$.
		
	\end{proof}

	\section{Hausdorff Convergence} \label{sec:Hausdorff convergence}
	First we recall some notations
	\[
	\mathcal X_{\vd,n} \,:=\, {\rm PureUCP} (\mathcal T_{\vd},M_n),
	\qquad 
	\mathcal Y_{\vd,n}\,:=\, {\rm UCP} (\mathcal T_{\vd}, M_{n}),
	\qquad
	\mathcal Y_n 
	\,:=\,
	{\rm UCP}(C(\mathbb T),M_n).
	\]
	We embed $\mathcal Y_{\vd,n}$ into $\mathcal Y_n$ by sending each $\varphi\in\mathcal Y_{\vd,n}$ to the UCP map $C(\mathbb T)\to M_n$ defined by
	\[
	f \,\mapsto\, \int_{\mathbb T} f(z)P_\varphi(z) \, dm(z),
	\qquad f\in C(\mathbb T),
	\]
	where $P_{\varphi}$ is the polynomial density of $\varphi.$
	
	\begin{definition}
		Let $\mathcal A_{\vd,n}$ be the set of UCP maps $ \varphi:C(\mathbb T)\to M_n $
		of the form
		\[
		\varphi (f)
		\,=\,
		\int_{\mathbb T} f(z) P(z) \,dm(z),
		\]
		where $ P $ is a normalized $n \times n$ matrix-valued positive trigonometric polynomial of degree at most $ \vd-1 $.
	\end{definition}
	
	\begin{definition}
		Let $ \mathcal P_{\vd,n}\subset \mathcal A_{\vd,n} $ be the subset consisting
		of those maps
		\[
		\varphi_Q(f) 
		\,=\,
		\int_{\mathbb T}f(z)Q(z)^*Q(z)\,dm(z),
		\]
		where $ Q=(q_1,\ldots,q_n) $ is a row polynomial of degree at
		most $ \vd-1 $ with normalized $Q^{*}Q$ which 
		satisfies 
		\[
		\max_j\deg q_j=\vd-1,
		\quad \text{and} \quad 
		\gcd(q_1,\ldots,q_n)=1.
		\]
	\end{definition}
	
	Note that $\mathcal A_{\vd,n}$ is the image of $\mathcal Y_{\vd,n}$ under the embedding, while $\mathcal P_{\vd,n}$ is a subset of the image of $\mathcal X_{\vd,n}$ under the same embedding. 
	
	\smallskip
	
	Proposition \ref{prop:Hausdorff convergence} is the main result of this section, where we prove that $\mathcal P_{\vd,n}$ converges to $\mathcal Y_n$ in the Hausdorff sense with respect to the matricial Monge--Kantorovich metric $\rho_n$ defined in \eqref{eq:M-Kmetric}. We start with an elementary density lemma for coprime row polynomials. 
	
	\begin{lemma} \label{lem:coprime polynomials}
		Let $ n\geq 2$, and let $ Q(z)=\begin{pmatrix}q_1(z)&\cdots&q_n(z)\end{pmatrix} $
		be a row of polynomials of degree at most $ \vd-1 .$ Fix $ N \geq  \vd $ and $ \varepsilon>0.$ Then there exists
		$ P(z)=\begin{pmatrix}p_1(z)&\cdots&p_n(z)\end{pmatrix} $ 
		satisfying
		\begin{enumerate}[leftmargin=*, label=(\arabic*), font=\normalfont]\itemsep=3pt
			\item[(i)]  $ \max_j \deg p_j=N, $
			\item[(ii)] $ \gcd(p_1,\ldots,p_n)=1, $ and
			\item[(iii)]  $ \|P-Q\|_{\infty} <\varepsilon.$ 
		\end{enumerate}
	\end{lemma}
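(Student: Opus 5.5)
Since $n\geq 2$, the first two entries of the row can be perturbed independently: one of them supplies the top degree $N$, and the other is made coprime to it. The sup norm is over $\mathbb T$, where $|z^N|=1$, so adding $\delta z^N$ costs exactly $\delta$.

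\emph{Step 1 (degree).} Fix $\delta$ with $0<\delta<\varepsilon/2$. Set
\[
p_1(z)=q_1(z)+\delta z^N .
\]
Since $\deg q_1\leq \vd-1<N$, we get $\deg p_1=N$ exactly, and $\|p_1-q_1\|_\infty=\delta$ on $\mathbb T$. Since $p_1$ is a nonzero polynomial, it has finitely many roots $\alpha_1,\ldots,\alpha_k$ with $k\leq N$.

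\emph{Step 2 (coprimality).} Set $p_2=q_2+c$ with $c\in\mathbb C$ and $|c|<\varepsilon/2$. For each root $\alpha_i$, the condition $p_2(\alpha_i)=0$ amounts to $c=-q_2(\alpha_i)$. This excludes at most $k$ values of $c$. Choose $c$ in the disc $|c|<\varepsilon/2$ avoiding these values. Then $p_1$ and $p_2$ have no common root, so $\gcd(p_1,p_2)=1$ over $\mathbb C$. Also $\deg p_2\leq \max(\deg q_2,0)\leq \vd-1<N$.

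\emph{Step 3 (remaining entries and conclusion).} For $j\geq 3$ put $p_j=q_j$. Then
\[
\max_j \deg p_j=\deg p_1=N ,
\]
which gives (i). Any common divisor of $p_1,\ldots,p_n$ divides both $p_1$ and $p_2$, so $\gcd(p_1,\ldots,p_n)=1$, which gives (ii). For (iii), the difference is $P-Q=(\delta z^N,\,c,\,0,\ldots,0)$. Hence at every $z\in\mathbb T$,
\[
\|P(z)-Q(z)\|\leq \delta+|c|<\varepsilon ,
\]
whether the norm on rows is Euclidean or entrywise max.

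The only point needing care is the order of the choices. The roots of $p_1$ must be fixed before $c$ is chosen, which is why $p_1$ is perturbed first. After that, the exclusion of finitely many values of $c$ is routine. If the paper later needs $Q^*Q$ to be normalized, that normalization can be done afterwards by a scalar rescaling close to $1$; this changes neither the degree nor the gcd.
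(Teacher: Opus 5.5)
Your proof is correct and is essentially the paper's argument: perturb $q_1$ by $\delta z^N$ to force degree exactly $N$, then shift $q_2$ by a small constant avoiding the finitely many values $-q_2(\alpha_\ell)$ at the roots of $p_1$, and leave the remaining entries unchanged. Your closing aside is inaccurate, though it lies outside the lemma: a scalar rescaling will not in general normalize $\int \widetilde Q^*\widetilde Q\,dm$, since that matrix is close to $I_n$ but need not be a multiple of it. The paper (Lemma~\ref{lem:forcing purity}) instead right-multiplies by $G^{-1/2}$, which also preserves the degree and the gcd.
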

	\begin{proof}
		Since $ n\geq 2 $, it is enough to perturb the first two components. Choose a small nonzero complex number $ \delta_1 $ and set
		\[
		p_1(z)
		\,=\,
		q_1(z)+\delta_1 z^N.
		\]
		Since $ N \geq  \vd $, we have
		$ \deg p_1 = N. $
		Moreover, by choosing $|\delta_1|$ sufficiently small, $p_1$ is as
		close to $q_1$ as desired in sup norm.
		
		\smallskip
		
		Let $ \alpha_1,\ldots,\alpha_m $	be the distinct roots of $p_1$. We now choose $p_2$. Put
		\[
		p_2(z) \,=\, q_2(z)+\delta_2,
		\]
		where $ \delta_2\in\mathbb C $ will be chosen small. We need $ p_1 $ and $ p_2 $ to have no common zero. Since the zeros of
		$ p_1 $ are $ \alpha_1,\ldots,\alpha_m $, this is equivalent to requiring $ p_2(\alpha_\ell)\neq 0 $ for all
		$ \ell=1,\ldots,m. $ But $ p_2(\alpha_\ell)=q_2(\alpha_\ell)+\delta_2. $
		Hence the forbidden values of $ \delta_2 $ are precisely $-q_2(\alpha_1),\ldots,-q_2(\alpha_m). $
		This is a finite set. Therefore we can choose $ \delta_2 $ arbitrarily
		small such that $ \delta_2\notin
		\{-q_2(\alpha_1),\ldots,-q_2(\alpha_m)\}. $
		For this choice, $ p_2(\alpha_\ell)\neq 0 $ for all 
		$ \ell=1,\ldots,m. $
		Thus $ p_1 $ and $ p_2 $ have no common zero, and therefore $\gcd(p_1,p_2) = 1. $
		
		\smallskip
		
		For $ j \geq 3 $, set	$ p_j = q_j.$ Then $ \gcd(p_1,\ldots,p_n)=1, $
		because already  $ \gcd(p_1,p_2)=1. $
		Also $ \max_j\deg p_j=N, $
		because $ \deg p_1=N. $ Finally, by choosing $ |\delta_1| $ and $ |\delta_2| $ sufficiently small,
		we ensure $ \|Q-Q_0\|_{\infty}<\varepsilon.$
		This proves the lemma.
	\end{proof}
	
	In the next step, we approximate a UCP map $C(\mathbb T)\to M_n$ by UCP maps with positive polynomial densities, using a standard Fej\'er kernel approximation argument. By the operator-valued Riesz--Markov representation theorem
	\cite[Proposition 4.5]{Paulsen},
	every UCP map $\varphi:C(\mathbb T)\to M_n$ is of the form
	\[
	\varphi(f) \,=\, \int_{\mathbb T} f\,d\mu,
	\]
	where $\mu$ is a positive $n\times n$ matrix-valued regular Borel measure
	satisfying $\mu(\mathbb T)=I_n$. We approximate $\mu$ by a positive trigonometric polynomial density.
	
	\begin{lemma} \label{lem: UCP by polynomials}
		Let $ \varphi \in \mathcal Y_{n}$ and let $\varepsilon>0$. Then there exists a normalized positive $n \times n$ matrix-valued trigonometric polynomial $P$ such that
		the UCP map
		\[
		\psi_P(f) \,:=\, \int_{\mathbb T}f(z)P(z)\,dm(z)
		\]
		satisfies $ \rho_n(\varphi, \psi_P)<\varepsilon.$
	\end{lemma}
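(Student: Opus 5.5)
We use the Fejér kernel. By the Riesz--Markov representation recalled above, write $\varphi(f)=\int_{\mathbb T} f\,d\mu$, where $\mu$ is a positive $M_n$-valued regular Borel measure with $\mu(\mathbb T)=I_n$. For $N\geq 1$ let
\[
K_N(z)=\sum_{|k|\leq N}\Bigl(1-\tfrac{|k|}{N+1}\Bigr)z^k
\]
be the Fejér kernel, which is nonnegative on $\mathbb T$ and satisfies $\int_{\mathbb T} K_N\,dm=1$. Define
\[
P_N(z)=\int_{\mathbb T} K_N(z\overline w)\,d\mu(w).
\]
This is an $M_n$-valued trigonometric polynomial of degree at most $N$, since its coefficients are $(1-\tfrac{|k|}{N+1})\widehat\mu(k)$. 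For each $z$ and each $x\in\mathbb C^n$ we have $\langle P_N(z)x,x\rangle=\int K_N(z\overline w)\,d\mu_x(w)\geq 0$, where $\mu_x=\langle\mu(\cdot)x,x\rangle$ is a positive scalar measure, so $P_N$ is positive. Its constant coefficient is $\mu(\mathbb T)=I_n$, so $P_N$ is normalized. Thus $\psi_{P_N}$ is a UCP map of the required form.

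By Fubini, $\psi_{P_N}(f)=\int (K_N*f)\,d\mu=\varphi(\sigma_N f)$, where $\sigma_N f=K_N*f$ is the Fejér mean of $f$. This uses that $K_N(z\overline w)$ is symmetric under $z\leftrightarrow w$ up to conjugation, and $K_N$ is real and even. Hence for any $f\in C^1(\mathbb T)$ with $\|f'\|_\infty\leq 1$,
\[
\|\varphi(f)-\psi_{P_N}(f)\| = \|\varphi(f-\sigma_N f)\| \leq \|f-\sigma_N f\|_\infty,
\]
since a UCP map is contractive. The whole proof therefore reduces to a \emph{uniform} estimate of $\|f-\sigma_Nf\|_\infty$ over the Lipschitz ball. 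This is the only real step.

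For that estimate, write $f(z)-\sigma_Nf(z)=\int K_N(\zeta)\,(f(z)-f(z\overline\zeta))\,dm(\zeta)$. Using $|f(z)-f(z\overline\zeta)|\leq |t|$ for $\zeta=e^{it}$, $|t|\leq\pi$ (since $\|f'\|_\infty\leq 1$), we get
\[
\|f-\sigma_Nf\|_\infty\leq \frac{1}{2\pi}\int_{-\pi}^{\pi}|t|\,K_N(e^{it})\,dt .
\]
This quantity is independent of $f$ and tends to $0$ as $N\to\infty$. To see this, split at $|t|\leq\delta$ and $|t|>\delta$. The first part is at most $\delta$. The second is at most $\pi\sup_{|t|>\delta}K_N\leq \pi/\bigl((N+1)\sin^2(\delta/2)\bigr)$, which follows from $K_N(e^{it})=\frac{1}{N+1}\bigl(\frac{\sin((N+1)t/2)}{\sin(t/2)}\bigr)^2$. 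In fact the bound is $O(\log N/N)$.

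Taking the supremum over $f$ gives $\rho_n(\varphi,\psi_{P_N})\leq \frac{1}{2\pi}\int|t|K_N\to 0$, so choosing $N$ large finishes the proof with $P=P_N$. The main care point is this uniformity over the Lipschitz ball. Pointwise convergence $\sigma_Nf\to f$ alone would not suffice, because the supremum defining $\rho_n$ ranges over infinitely many $f$. The Lipschitz bound handles this because it makes the error estimate independent of $f$.
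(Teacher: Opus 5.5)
Your proof is correct and follows essentially the same route as the paper. Both arguments use Fej\'er-kernel smoothing $P_N=K_N*\mu$, positivity via the scalar measures $\mu_x$, Fubini to get $\psi_{P_N}(f)=\varphi(\sigma_N f)$, and contractivity of $\varphi$. The one difference is in your favour: you actually prove the uniform bound $\sup_{\mathrm{Lip}(f)\le 1}\|f-\sigma_N f\|_\infty\le \frac{1}{2\pi}\int_{-\pi}^{\pi}|t|\,K_N(e^{it})\,dt\to 0$, which the paper only asserts, and your estimate also makes the paper's normalization $f(1)=0$ unnecessary, because $f-\sigma_N f$ is unchanged when a constant is added to $f$.
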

	
	\begin{proof}
		Let $ \mu $  be the positive $n \times n$ matrix-valued measure representing $ \varphi $.
		Let $ F_N $ be the scalar Fej\'er kernel
		\[
		F_{N} (z) 
		\,=\, 
		\sum\limits_{k= -N}^{N} \left( 1 - \frac{|k|}{N+1} \right) z^{k}
		\,=\, 
		\frac{1}{N+1} \left| \sum_{k=0}^{N} z^{k}  \right|^{2}, \qquad (z \in \mathbb T).
		\]
		Therefore, the scalar Fej\'er kernel satisfies $F_{N}(z) \geq 0$ for all $z \in \mathbb T$ and $	\int_{\mathbb T}F_{N}(z)\,dm(z) \,=\, 1 .$
		
		\smallskip
		
		Now define the convolution of $F_{N}$ with the $n \times n$ matrix-valued measure $\mu$ by 
		\[
		P_N (z) 
		\,=\,
		F_N*\mu (z)
		\,:=\,
		\int_{\mathbb T}F_{N}(z \overline{w})\,d\mu(w), \qquad (z \in \mathbb T).
		\]
		Then $ P_N$ is an $n \times n$ matrix-valued trigonometric polynomial. Indeed,
		\[
		P_{N}(z) 
		\,=\,
		\sum\limits_{k= -N}^{N} \left( 1 - \frac{|k|}{N+1} \right) z^{k} \int_{\mathbb T} \overline{w}^{k} \,d\mu(w).
		\] 
		Moreover, $P_{N}$ is positive. Fix $z \in \mathbb T$ and $\xi \in \mathbb C^{n}.$ Define the scalar positive measure $\mu_{\xi} (E) = \langle \mu(E) \xi , \xi \rangle.$ Then 
		$ \langle P_{N}(z) \xi ,\xi \rangle = 
		\int_{\mathbb T} F_{N}(z \overline{w}) d\mu_{\xi}(w) .$
		Since $F_{N}(z \overline{w}) \geq 0$ and $\mu_{\xi}$ is a positive scalar measure, we get  $	\langle P_{N}(z) \xi ,\xi \rangle  \geq 0.$ Thus $P_{N}(z) \geq 0$ for all $z \in \mathbb T.$
		Also 
		\[
		\int_{\mathbb T}P_N(z)\,dm(z)
		\,=\,
		\mu(\mathbb T)
		\,=\,
		I_n.
		\]
		
		Let
		$ \psi_N(f):=\int_{\mathbb T}f(z)P_N(z)\,dm(z).$
		Substituting the definition of $P_{N}(z)$ we get 
		\[
		\psi_{N}(f) 
		\,=\,
		\int\limits_{\mathbb T} f(z) \Big(  \int\limits_{\mathbb{T}} F_{N}(z \overline{w}) \, \vd \mu(w)   \Big) dm(z). 
		\]
		Since $f$ and $F_{N}$ are bounded and $\mu$ is finite, we can interchange the order of integration 
		\[
		\psi_{N}(f) 
		\,=\,
		\int\limits_{\mathbb T}  \Big(  \int\limits_{\mathbb{T}} f(z) F_{N}(z \overline{w}) \, dm(z)   \Big) d\mu(w)
		\,=\,
		\int_{\mathbb T}(F_N*f)(w)\,d\mu(w). 
		\] 
		Therefore
		\[
		\psi_N(f) - \varphi(f)
		\,=\,
		\int_{\mathbb T}\bigl((F_N*f)(w) - f(w) \bigr)\, d\mu(w).
		\]
		Since $ \mu(\mathbb T)=I_n $, we have $ \left\|
		\int_{\mathbb T}h(z)\,d\mu(z)
		\right\|
		\leq \|h\|_\infty $
		for scalar continuous $ h $. Hence
		\[
		\| \psi_N(f) - \varphi(f) \|
		\,\leq\,
		\|F_N*f-f\|_\infty.
		\]
		The Fej\'er kernels form an approximate identity, and the convergence
		$ \|F_N*f - f\|_\infty\to 0 $
		is uniform over the class
		\[
		\{f\in C^1(\mathbb T)\ :\  {\rm Lip}(f)\leq 1,\ f(1)=0 \}.
		\]
		Since the metric $\rho_n$ is insensitive to constant functions, it is enough to consider this normalized class. Thus, for $N$ sufficiently large, we obtain
		$ \rho_n(\varphi,\psi_N)<\varepsilon. $
		Finally, setting $P=P_N$ completes the proof.
	\end{proof}
	
	In the next step, we approximate a positive matrix-valued polynomial density
	by another such density which admits a Fej\'er--Riesz factorization with a row
	polynomial factor.
	
	\smallskip
	
	Let $ P$ be a normalized $ n\times n$ matrix-valued positive trigonometric
	polynomial. By the matrix-valued Fej\'er--Riesz factorization, there is an $m\times n$
	polynomial matrix
	\[
	H(z)  \,=\,
	\begin{pmatrix}
		h_1(z)\\
		\vdots\\
		h_m(z)
	\end{pmatrix},
	\]
	where each $ h_\ell $ is an analytic row polynomial, such that
	\[
	P(z)=H(z)^*H(z)=\sum_{\ell=1}^m h_\ell(z)^*h_\ell(z).
	\]
	
	We now encode the rows of $ H $ into one row polynomial by separating their	frequencies.
	
	\begin{lemma} \label{lem:single polynomial row}
		Let $P(z)=H(z)^*H(z)$ be as above, and let $\varepsilon>0$. Then there exists
		a row polynomial
		$ Q(z)=\begin{pmatrix}q_1(z)&\cdots&q_n(z)\end{pmatrix} $
		such that $Q^*Q$ is normalized and 
		$\rho_n(\varphi_Q,\varphi_H)<\varepsilon,$
		where $\varphi_{Q}$ and $\varphi_H$ are elements of $\mathcal Y_{n}$ with polynomial densities $Q^{*}Q$ and $H^*H$, respectively.
	\end{lemma}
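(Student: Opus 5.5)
Write $H$ with rows $h_1,\dots,h_m$ and define
\[
Q(z)\,=\,\sum_{\ell=1}^m z^{(\ell-1)M}h_\ell(z)
\]
for a large integer $M$, larger than $2\deg H$. The idea is that on $\mathbb T$ the density becomes
\[
Q(z)^*Q(z)\,=\,\sum_{\ell,k} z^{(k-\ell)M}h_\ell(z)^*h_k(z)\,=\,H(z)^*H(z)+\sum_{\ell\neq k} z^{(k-\ell)M}h_\ell(z)^*h_k(z).
\]
Each cross term $h_\ell^*h_k$ is a trigonometric polynomial of degree at most $\deg H$. Multiplying it by $z^{(k-\ell)M}$ with $|k-\ell|\geq 1$ shifts all its frequencies to $|j|\geq M-\deg H>0$. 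So the constant coefficient of $Q^*Q$ equals that of $H^*H$, namely $I_n$, and $Q^*Q$ is normalized. It is also automatically positive, so $\varphi_Q\in\mathcal Y_n$.

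Next I estimate $\rho_n(\varphi_Q,\varphi_H)$. For $f\in C^1(\mathbb T)$ with ${\rm Lip}(f)\leq 1$,
\[
\varphi_Q(f)-\varphi_H(f)\,=\,\sum_{\ell\neq k}\int_{\mathbb T} f(z)\,z^{(k-\ell)M}\,h_\ell(z)^*h_k(z)\,dm(z).
\]
Expand $h_\ell^*h_k=\sum_{|j|\leq \deg H}B^{\ell k}_j z^j$ with fixed matrix coefficients. Each term is then $B^{\ell k}_j\,\widehat f\bigl(-(k-\ell)M-j\bigr)$, where the index has modulus at least $M-\deg H$. For $f\in C^1$ one has $|\widehat f(p)|=|\widehat{f'}(p)|/|p|\leq \|f'\|_\infty/|p|\leq 1/|p|$ for $p\neq0$. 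Hence
\[
\|\varphi_Q(f)-\varphi_H(f)\|\,\leq\,\frac{C_H}{M-\deg H},\qquad C_H:=\sum_{\ell\neq k}\sum_j\|B^{\ell k}_j\|.
\]
This bound is uniform in $f$, and $C_H$ depends only on $H$. Taking the supremum and choosing $M$ large gives $\rho_n(\varphi_Q,\varphi_H)<\varepsilon$. Constants cause no trouble, since they drop out of the difference anyway.

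The main point needing care is this uniform bound. The supremum in $\rho_n$ runs over an unbounded set of $f$, because constants are allowed. The difference, however, only involves nonzero Fourier modes of $f$, and these are controlled by ${\rm Lip}(f)$. So no normalization $f(1)=0$ is even needed here. The remaining checks are routine bookkeeping: $Q$ is an analytic row polynomial of degree $(m-1)M+\deg H$, and the frequency shifts are disjoint from $0$ once $M>2\deg H$.
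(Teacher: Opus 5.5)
Your proposal is correct and follows essentially the paper's proof: you stack the rows with frequency separation, $Q=\sum_\ell z^{N_\ell}h_\ell$, observe that the cross terms have no constant Fourier coefficient (so $Q^*Q$ is normalized), and control those cross terms through the estimate $|\widehat f(p)|\leq 1/|p|$ for ${\rm Lip}(f)\leq 1$. You differ only in taking the spacing uniform ($N_\ell=(\ell-1)M$) and making the bound explicit as $C_H/(M-\deg H)$, which also shows that the paper's restriction to the class $f(1)=0$ is not actually needed.
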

	
	\begin{proof}
		Write the rows of $ H $ as $ h_1,\ldots,h_m.$ Choose integers	$ N_1 < N_2 < \cdots < N_m $
		with gaps so large that $ |N_\ell - N_r| > \deg(h_\ell^*h_r) $ for $ \ell \neq r .$ Define
		\[
		Q(z)
		\,=\,
		\sum_{\ell=1}^{m} z^{N_\ell}h_\ell(z).
		\]
		Then
		\[
		Q(z)^* Q(z)
		\,=\,
		\sum_{\ell=1}^m h_\ell(z)^*h_\ell(z)
		+
		\sum_{\ell\neq r}z^{N_r - N_\ell}h_\ell(z)^*h_r(z).
		\]
		The first sum is $ P(z) $. The second sum consists of cross terms with large nonzero frequencies. Because the gaps are larger than the degrees of the polynomials
		$h_\ell^*h_r$, the cross terms have no constant Fourier coefficient.
		Therefore
		\[
		\int_{\mathbb T}Q(z)^*Q(z)\,dm(z)
		\,=\,
		\int_{\mathbb T}P(z)\,dm(z)
		\,=\,
		I_n.
		\]
		
		It remains to make the cross terms small against $C^{1}(\mathbb T)$ functions.
		For  $ f \in C^{1}(\mathbb T) $, its Fourier coefficients
		satisfy
		\[
		|\hat f(k)| \,\leq\, \frac{1}{|k|},
		\qquad(k\neq 0),
		\]
		whenever $\operatorname{Lip}(f)\leq 1$; see \cite[Theorem 1.6]{Katznelson04}. Each cross term
		is a finite sum of matrix coefficients multiplied by frequencies of the
		form
		\[
		s + N_r - N_\ell ,
		\]
		where $ s $ ranges over a fixed finite set depending only on $ H $.
		By choosing all gaps $ |N_r - N_\ell| $ sufficiently large, all these	frequencies become large. Hence the integral of every cross term against
		$ f $ is uniformly small over $\{ f \in C^{1}(\mathbb T) :  {\rm Lip} (f) \leq 1, \, f(1) = 0\}$. Therefore the total
		cross-term contribution is less than $\varepsilon$.
	\end{proof}
	
	The row polynomial $ Q$ constructed in Lemma \ref{lem:single polynomial row} need not define a pure map on a finite Toeplitz system. For $ n\geq 2 $, we can perturb it slightly so that its scalar polynomial entries are coprime.
	
	\begin{lemma} \label{lem:forcing purity}
		Let $n\geq 2$. Let $ Q(z)=\begin{pmatrix}q_1(z)&\cdots&q_n(z)\end{pmatrix} $
		be a row polynomial with normalized $Q^{*}Q.$  Let $\vd := \max_{j} \deg q_{j}$ and $\varepsilon>0$. Then for all $N > \vd +1$, there exists
		a row polynomial
		$ P(z)=\begin{pmatrix}p_1(z)&\cdots&p_n(z)\end{pmatrix} $ with normalized $P^{*}P$
		satisfying
		\begin{enumerate}[leftmargin=*, label=(\arabic*), font=\normalfont]\itemsep=3pt
			\item[(i)] $\rho_{n} (\varphi_{P}, \varphi_{Q}) < \varepsilon,$ where $\varphi_{P}$ and $\varphi_Q$ are elements of $\mathcal Y_{n}$ with polynomial densities $P^{*}P$ and $Q^*Q$, respectively.
			\item[(iii)]  $\max_j\deg p_j = N - 1$,
			\item[(iv)]  $\operatorname{gcd}(p_1,\ldots,p_n)=1$.
		\end{enumerate}
		Consequently, $ P^{*}P $ is the polynomial density of a pure UCP map
		$ \mathcal T_{N}\to M_n. $
	\end{lemma}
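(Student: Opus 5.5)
We apply Lemma \ref{lem:coprime polynomials} to $Q$ with degree parameter $N-1$, obtain a nearby coprime row $\widetilde P$ of exact degree $N-1$, and then renormalize. Since $N>\vd+1$, we have $N-1\geq \vd+1>\vd$, which is the hypothesis Lemma \ref{lem:coprime polynomials} needs (there $\vd$ plays the role of one more than the maximal degree). For $\delta>0$ to be fixed later, the lemma gives $\widetilde P=(\tilde p_1,\ldots,\tilde p_n)$ with $\max_j\deg\tilde p_j=N-1$, $\gcd(\tilde p_1,\ldots,\tilde p_n)=1$ and $\|\widetilde P-Q\|_\infty<\delta$.

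The density $\widetilde P^*\widetilde P$ need not be normalized. Its constant coefficient is $A:=\int_{\mathbb T}\widetilde P^*\widetilde P\,dm$, and $\|A-I_n\|\leq \|\widetilde P^*\widetilde P-Q^*Q\|_\infty\leq \delta(2\|Q\|_\infty+\delta)$. For $\delta$ small, $A$ is therefore invertible and close to $I_n$. We set $P:=\widetilde P A^{-1/2}$, so that $P^*P=A^{-1/2}\widetilde P^*\widetilde P A^{-1/2}$ has constant coefficient $I_n$. Right multiplication by the invertible constant matrix $A^{-1/2}$ preserves both properties we need:
\begin{enumerate}[leftmargin=*, label=(\alph*), font=\normalfont]
\item the linear span of the coefficient rows is unchanged, so the maximal degree stays $N-1$;
\item the common zeros of the entries are unchanged, since $P(z)=0$ if and only if $\widetilde P(z)=0$, so the entries remain coprime.
\end{enumerate}
This establishes (iii) and (iv).

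For (i), take $f\in C^1(\mathbb T)$ with $\operatorname{Lip}(f)\leq1$. Since both maps are unital, we may subtract a constant and assume $f(1)=0$, which gives $\|f\|_\infty\leq\pi$. Then
\[
\|\varphi_P(f)-\varphi_Q(f)\|\leq \pi\,\|P^*P-Q^*Q\|_\infty ,
\]
and $\|P^*P-Q^*Q\|_\infty\to0$ as $\delta\to0$, because $A^{-1/2}\to I_n$ and $\widetilde P\to Q$ uniformly. Choosing $\delta$ small enough makes the right-hand side less than $\varepsilon$. The only delicate point is this normalization step: we must make sure it destroys neither the full degree nor coprimality, and that is exactly what (a) and (b) record.

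For the final assertion, $P^*P$ is a normalized positive polynomial of degree at most $N-1$. By Lemma \ref{lem:bijection} it is the polynomial density of a UCP map $\mathcal T_N\to M_n$. The row $P$ satisfies condition (1) of Theorem \ref{thmA} with $\vd$ replaced by $N$, since $\max_j\deg p_j=N-1$. It satisfies condition (2) vacuously, since $g=\gcd(p_1,\ldots,p_n)=1$ has no zeros. Theorem \ref{thmA} then shows the map is pure.
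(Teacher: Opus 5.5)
Your proof is correct and follows the paper's argument essentially step for step: perturb $Q$ via Lemma~\ref{lem:coprime polynomials} to a coprime row of exact degree $N-1$, renormalize by $A^{-1/2}$ (the paper's $G_\delta^{-1/2}$), note that right multiplication by an invertible constant matrix preserves full degree and coprimality, and invoke Theorem~\ref{thmA}; your explicit bound $\|f\|_\infty\le\pi$ after normalizing $f(1)=0$ fills in a step the paper leaves implicit. One wording fix in (a): the span of the coefficient rows is not preserved but is moved by $A^{-1/2}$ (what is preserved is the span of the scalar entries); your conclusion still stands, because the coefficient of $z^{N-1}$ in $P$ is that of $\widetilde P$ times the invertible $A^{-1/2}$, hence nonzero, and all higher coefficients stay zero.
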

	
	\begin{proof}
		Fix $ N > \vd+1$. By Lemma \ref{lem:coprime polynomials},  for any $\delta>0$ we can find a row
		polynomial
		$ \widetilde Q_{\delta}(z) $
		of degree at most $ N-1$ such that
		\[
		\max_j\deg \tilde q_{\delta,j} = N-1,
		\qquad
		{\rm gcd}(\tilde q_{\delta,1},\ldots,\tilde q_{\delta,n}) = 1, \quad \text{and} \quad \|  \widetilde Q_{\delta} - Q \|_{\infty} < \delta.
		\]
		
		Set
		$ G_{\delta} =\int_{\mathbb T}\widetilde Q_{\delta}(z)^*\widetilde Q_{\delta}(z)\,dm(z). $
		Clearly $ G_{\delta}\geq 0 $. Now we show that $ G_{\delta} \to I_n $ as $ \delta\to 0 $. Since
		$ \|\widetilde Q_{\delta} - Q\|_{\infty}<\delta, $ we get 
		$ \|\widetilde Q_{\delta} -Q\|_{\infty}\to 0 $ as $\delta\to 0. $
		Therefore
		$ \widetilde Q_{\delta}(z) \to  Q(z) $
		uniformly on $ \mathbb T $. Hence
		$ \widetilde Q_{\delta}(z)^*\widetilde Q_{\delta}(z)
		\to
		Q(z)^*Q(z) $
		uniformly on $\mathbb T$. Integrating gives
		\[
		G_{\delta}
		\,=\,
		\int_{\mathbb T}\widetilde Q_{\delta}(z)^*\widetilde Q_{\delta}(z)\,dm(z)
		\longrightarrow
		\int_{\mathbb T}Q(z)^*Q(z)\,dm(z)
		\,=\,
		I_{n}.
		\]
		Therefore
		$ G_{\delta}\to I_n.$ In particular, for $\delta>0$ sufficiently small,
		\[
		\|G_{\delta}-I_n\|<\frac12.
		\]
		Since $G_{\delta} \geq 0,$ the spectrum of $ G_{\delta} $ is contained in the interval $(1/2,3/2).$ In particular, $ G_{\delta}$ is invertible.
		
		\smallskip
		
		Now define
		\[
		P_{\delta}(z) \,=\, \widetilde Q_{\delta} (z)G_{\delta}^{-1/2}.
		\]
		Then
		\[
		\int_{\mathbb T}P_{\delta}(z)^*P_{\delta}(z)\,dm(z)
		\,=\,
		G_{\delta}^{-1/2}G_{\delta}G_{\delta}^{-1/2}
		\,=\,
		I_n.
		\]
		Right multiplication by the invertible matrix $ G_{\delta}^{-1/2} $ does not change
		the polynomial subspace spanned by the scalar polynomial entries of $\widetilde{Q}_{\delta}$. Therefore the row polynomial 
		\[
		P_{\delta}(z)\,=\, \begin{pmatrix}p_{\delta,1}(z)&\cdots&p_{\delta,n}(z)\end{pmatrix} 
		\]
		satisfies 
		\[
		\max_j\deg p_{\delta,j} = N-1, \quad \text{and} \quad	{\rm gcd}(p_{\delta,1},\ldots,p_{\delta,n})=1.
		\]
		Finally, since $ P_{\delta}\to Q $ in sup norm as $ \delta\to 0 $, we have
		$ \| P_{\delta}^*P_{\delta} - Q^{*} Q \|_{\infty} \to 0 .$ Hence
		$\rho(\varphi_{P_{\delta}}, \varphi_{Q})$
		can be made smaller than $\varepsilon$ by choosing $\delta$ sufficiently
		small.
		
		\smallskip
		
		It follows from Theorem \ref{thmA}, that $P_{\delta}^{*}P_{\delta}$ is the polynomial density of a pure UCP map
		$ \mathcal T_N\to M_n .$
	\end{proof}
	
	We can now prove the main result of this section.

	\begin{proposition} \label{prop:Hausdorff convergence}
		Fix $ n\geq 2 $. Then
		\[
		\lim_{\vd\to\infty}
		\operatorname{dist}_{H}^{\rho_n}
		\left(
		\mathcal P_{\vd,n},
		\mathcal Y_{n}
		\right)
		=
		0.
		\]
	\end{proposition}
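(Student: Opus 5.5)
Since $\mathcal P_{\vd,n}\subseteq\mathcal Y_n$, one inclusion in the Hausdorff distance is free: every point of $\mathcal P_{\vd,n}$ lies at distance $0$ from $\mathcal Y_n$. The work is the other direction. We must show that for every $\varepsilon>0$ there is $\vd_0$ such that, for all $\vd\geq \vd_0$, every $\varphi\in\mathcal Y_n$ lies within $\rho_n$-distance $\varepsilon$ of $\mathcal P_{\vd,n}$. The plan is to chain Lemmas \ref{lem: UCP by polynomials}, \ref{lem:single polynomial row} and \ref{lem:forcing purity} with the triangle inequality. The only real subtlety is uniformity: the degree $\vd_0$ must not depend on $\varphi$.

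To get uniformity we use compactness. The space $(\mathcal Y_n,\rho_n)$ is compact, because $\rho_n$ induces the point-norm topology (Kerr, as recalled in Subsection \ref{ssec:GHconvergence}) and $\mathcal Y_n$ is point-norm compact. Choose a finite $\varepsilon/4$-net $\varphi_1,\dots,\varphi_K$ of $\mathcal Y_n$. For each $i$ we build one row polynomial as follows.

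\begin{itemize}
\item Lemma \ref{lem: UCP by polynomials} gives a normalized positive trigonometric polynomial density $P_i$ with $\rho_n(\varphi_i,\psi_{P_i})<\varepsilon/4$.
\item The matrix Fej\'er--Riesz theorem writes $P_i=H_i^*H_i$.
\item Lemma \ref{lem:single polynomial row} then gives a row polynomial $Q_i$ with $Q_i^*Q_i$ normalized and $\rho_n(\varphi_{Q_i},\psi_{P_i})<\varepsilon/4$.
\end{itemize}

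Let $D_i=\max_j\deg q_{i,j}$ and set $\vd_0=\max_i D_i+2$. This is a finite number depending only on $\varepsilon$.

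Now fix $\vd\geq \vd_0$. For each $i$, apply Lemma \ref{lem:forcing purity} to $Q_i$ with $N=\vd>D_i+1$. It produces a row polynomial $R_i$ with $R_i^*R_i$ normalized, $\max_j\deg r_{i,j}=\vd-1$ and $\gcd(r_{i,1},\dots,r_{i,n})=1$. In particular $\varphi_{R_i}\in\mathcal P_{\vd,n}$, and $\rho_n(\varphi_{R_i},\varphi_{Q_i})<\varepsilon/4$. This lemma is where $n\geq 2$ is used.

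Given an arbitrary $\varphi\in\mathcal Y_n$, pick $i$ with $\rho_n(\varphi,\varphi_i)<\varepsilon/4$. The triangle inequality gives
\[
\rho_n(\varphi,\varphi_{R_i})\leq \rho_n(\varphi,\varphi_i)+\rho_n(\varphi_i,\psi_{P_i})+\rho_n(\psi_{P_i},\varphi_{Q_i})+\rho_n(\varphi_{Q_i},\varphi_{R_i})<\varepsilon.
\]
Hence $\operatorname{dist}_H^{\rho_n}(\mathcal P_{\vd,n},\mathcal Y_n)\leq\varepsilon$ for all $\vd\geq\vd_0$.

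The step I expect to need the most care is the closeness claim at the end of Lemma \ref{lem:forcing purity}. It must hold for each fixed $N$, and the proof there implicitly uses the estimate $\|\varphi_A(f)-\varphi_B(f)\|\leq\|A^*A-B^*B\|_\infty\,\|f-f(1)\|_\infty$. This turns sup-norm closeness of the densities into $\rho_n$-closeness, using that $\rho_n$ ignores constants and that ${\rm Lip}(f)\leq 1$ with $f(1)=0$ forces $\|f\|_\infty\leq\pi$.

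I would state this bound explicitly. I would also check that the perturbation in Lemma \ref{lem:coprime polynomials} is allowed for every $N>D_i+1$ and not just for some. That lemma needs $N\geq \deg+1$, which holds for all such $N$, so the approximating pure map exists at every large degree and not merely along a subsequence.
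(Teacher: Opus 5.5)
Your proof is correct and uses the same chain as the paper's proof: Fej\'er-kernel approximation, a frequency-separated single row, a coprime perturbation to full degree, and the triangle inequality.

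The one substantive difference is your finite $\varepsilon/4$-net. The paper fixes $\varphi$ and gets a degree threshold $N>\max_j\deg q_j+1$ that depends on $\varphi$, so as written it only shows $\operatorname{dist}(\varphi,\mathcal P_{\vd,n})\to 0$ for each individual $\varphi$. Your compactness step upgrades this to convergence of the Hausdorff distance; equivalently, these distance functions are $1$-Lipschitz on the compact space $\mathcal Y_n$, so pointwise convergence becomes uniform.

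Your explicit bound $\rho_n(\varphi_A,\varphi_B)\leq\pi\,\|A^*A-B^*B\|_\infty$ for normalized densities is also correct. It supplies the closeness step the paper leaves implicit at the end of the lemma on forcing purity.
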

	\begin{proof}
		Let $	\varphi\in \mathcal Y_{n} $
		and $\varepsilon>0$. Let $ \mu $ be the positive $n \times n$ matrix-valued
		measure representing $ \varphi$, so that
		\[
		\varphi(f) \,=\, \int_{\mathbb T}f\,d\mu,
		\qquad
		\mu(\mathbb T)=I_n.
		\]
		By Lemma \ref{lem: UCP by polynomials}, there exists a normalized positive $n \times n$ matrix-valued trigonometric polynomial  $P$ such that the UCP map 
		\[
		\psi_{P}(f) \,:=\, \int_{\mathbb T} f(z) P(z) \, dm(z)
		\]
		satisfies $\rho_{n}(\varphi, \psi_{P}) < \varepsilon /3.$ 
		
		\smallskip
		
		By the matrix-valued Fej\'er--Riesz factorization, there exists an $ m\times n$
		matrix-valued polynomial $ H$ such that
		$ P(z)=H(z)^*H(z). $
		Hence, if $ \vd$ is larger than $ \deg H $, the map
		\[
		\varphi_H(f) \,:=\, \int_{\mathbb T}f(z)H(z)^*H(z)\,dm(z)
		\]
		belongs to $\mathcal A_{d,n}$. Thus
		\[
		\lim_{\vd\to\infty}
		\operatorname{dist}_{H}^{\rho_n}
		\left(
		\mathcal A_{\vd,n},
		\mathcal Y_{n})
		\right)
		\,=\,
		0.
		\]
		
		Now by Lemma \ref{lem:single polynomial row}, there exists a row polynomial $Q(z) = \begin{pmatrix}
			q_{1}(z) & \cdots & q_{n}(z)
		\end{pmatrix}$ 
		such that $Q^{*}Q$ is normalized 
		and the UCP map 
		\[
		\varphi_{Q}(f) \,:=\, \int_{\mathbb T} f(z) Q(z)^{*}Q(z) \, dm(z)
		\]
		satisfies $\rho_{n}(\psi_{P}, \varphi_{Q}) < \varepsilon /3.$
		
		\smallskip
		
		Now by Lemma \ref{lem:forcing purity}, for any $N > \max_{j} \deg q_{j} +1,$ there exists a row polynomial $\widetilde Q(z) = \begin{pmatrix}
			\tilde q_{1}(z) & \cdots & \tilde q_{n}(z)
		\end{pmatrix}$ 
		with normalized $\widetilde Q^{*} \widetilde Q$
		satisfying 
		\[
		\max_{j} \deg \tilde q_{j} = N-1, \quad \text{and} \quad \gcd (\tilde q_{1}, \ldots, \tilde q_{n}) = 1,
		\]
		and the UCP map 
		\[
		\varphi_{Q}(f) \,:=\, \int_{\mathbb T} f(z) \widetilde Q(z)^{*} \widetilde Q(z) \, dm(z)
		\]
		satisfies $\rho_{n}(\varphi_{Q}, \varphi_{\widetilde Q}) < \varepsilon /3.$ Note that $\varphi_{\widetilde Q} \in \mathcal P_{N,n}.$
		
		\smallskip
		
		Finally we get $\rho_n(\varphi, \varphi_{\widetilde Q}) < \varepsilon$ for some $\varphi_{\widetilde Q}  \in \mathcal P_{N,n}.$ This proves the desired Hausdorff convergence.
	\end{proof}
	
	\section{Quantum Gromov--Hausdorff Convergence} \label{sec:G--H convergence}
	Let $\mathcal S_{1}$ and $ \mathcal S_{2}$ be two unital operator systems. Suppose that
	$ 	L_i: \mathcal S_{i} \to [0,\infty] ,$ $i=1,2$ 
	are seminorms satisfying $ L_i(\lambda 1_{\mathcal S_{i}})=0$
	for all $ \lambda\in\mathbb C$.
	
	\smallskip
	
	For $ n\geq 1 $ and $i=1,2,$ define
	\[
	\rho_{\mathcal S_{i},n} (\varphi,\psi)
	\,=\,
	\sup\{\|\varphi(a)-\psi(a)\| \ :\ a \in \mathcal S_{i}, \, L_{i}(a)\leq 1\},
	\]
	for $ \varphi,\psi\in {\rm UCP}(\mathcal S_{i},M_n).$
	
	Assume there are unital maps
	\[
	R_{1} : \mathcal S_{1} \to \mathcal S_{2},
	\qquad
	R_{2} : \mathcal S_{2} \to \mathcal{S}_{1}
	\]
	such that $ R_{1}$ is UCP, $ R_{2} $ is unital and linear, and for some $ \varepsilon > 0$,
	\begin{alignat*}{2}
		L_{2}(R_{1}a) &\leq L_{1}(a),
		\qquad & a &\in \mathcal D_{1}, \\[4pt]
		L_{1}(R_{2}b) &\leq L_{2}(b),
		\qquad & b &\in \mathcal D_{2}, \\[4pt]
		\|b-R_{1}R_{2}b\| &\leq \varepsilon L_{2}(b),
		\qquad & b &\in \mathcal D_{2}.
	\end{alignat*}
	where $\mathcal D_{1}$ and $\mathcal D_{2}$ are dense subspaces of $\mathcal S_{1}$ and $\mathcal S_{2}$ respectively. The following lemma follows from a straightforward calculation as a consequence of the above estimates.

	\begin{lemma}
		Given $\varphi\in {\rm UCP}(\mathcal S_{2},M_n), $
		define
		\[
		\tilde\varphi 
		\,=\, 
		\varphi\circ R_{1} \in {\rm UCP}(\mathcal S_{1},M_n).
		\]
		Then, for every
		$ \varphi,\psi \in {\rm UCP}(\mathcal S_{2},M_n), $
		one has
		\begin{equation*}\label{eq:first comparison}
			\rho_{\mathcal S_{1}, n} (\tilde\varphi, \tilde\psi)
			\,\leq\,
			\rho_{ \mathcal S_{2}, n} (\varphi,\psi),
		\end{equation*}
		and
		\begin{equation*} \label{eq:second comparison}
			\rho_{\mathcal S_{2}, n}(\varphi,\psi)
			\,\leq\,
			(1+\varepsilon) \rho_{\mathcal S_{1}, n}(\tilde\varphi, \tilde\psi)
			+ 2 \varepsilon.
		\end{equation*}
	\end{lemma}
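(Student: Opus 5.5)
The plan is to prove both inequalities directly from the three hypotheses on $R_1,R_2$, working with test elements in the dense subspaces $\mathcal D_1,\mathcal D_2$ and then passing to the full suprema. Approximating a general $b$ with $L_2(b)\le 1$ by elements of $\mathcal D_2$ is a continuity matter for UCP maps (or one simply assumes, as is implicit, that the suprema defining $\rho$ may be taken over the dense domains). Note first that $\tilde\varphi=\varphi\circ R_1$ is UCP, being a composition of UCP maps, so $\rho_{\mathcal S_1,n}(\tilde\varphi,\tilde\psi)$ makes sense.

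For the first inequality I would take $a\in\mathcal D_1$ with $L_1(a)\le 1$. Then
\[
\|\tilde\varphi(a)-\tilde\psi(a)\|=\|\varphi(R_1a)-\psi(R_1a)\|,
\]
and the hypothesis $L_2(R_1a)\le L_1(a)\le 1$ makes $R_1a$ admissible in the supremum defining $\rho_{\mathcal S_2,n}(\varphi,\psi)$. Taking the supremum over $a$ gives the claim.

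For the second inequality I would fix $b\in\mathcal D_2$ with $L_2(b)\le 1$ and split
\[
\varphi(b)-\psi(b)=\bigl(\varphi-\psi\bigr)(b-R_1R_2b)+\bigl(\tilde\varphi-\tilde\psi\bigr)(R_2b).
\]
The first term has norm at most $\|\varphi(b-R_1R_2b)\|+\|\psi(b-R_1R_2b)\|\le 2\|b-R_1R_2b\|\le 2\varepsilon$, using that UCP maps are contractive. For the second term, $a=R_2b$ satisfies $L_1(a)\le L_2(b)\le 1$, so $\|(\tilde\varphi-\tilde\psi)(a)\|\le \rho_{\mathcal S_1,n}(\tilde\varphi,\tilde\psi)$. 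A subtle point is that $R_2b$ need not lie in $\mathcal D_1$. I would handle this either by noting that the supremum in $\rho_{\mathcal S_1,n}$ runs over all of $\mathcal S_1$, or, if needed, by a density approximation.

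This already gives the stronger bound $\rho_{\mathcal S_2,n}\le\rho_{\mathcal S_1,n}+2\varepsilon$, which implies the stated $(1+\varepsilon)\rho_{\mathcal S_1,n}+2\varepsilon$. The factor $(1+\varepsilon)$ is presumably there to accommodate a variant with a normalization (for instance $b$ replaced by a scalar shift, or $L_1(R_2b)\le(1+\varepsilon)L_2(b)$), so I would keep it as harmless slack.

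The only real obstacle is the density and domain bookkeeping: justifying that restricting to $\mathcal D_2$ does not lower the supremum, and that $R_2(\mathcal D_2)$ is admissible. I would resolve this by adopting the standard convention that the Lip seminorms are lower semicontinuous, or that the suprema are over the dense domains where $L_i$ is finite.
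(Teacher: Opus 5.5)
Your argument is correct and is the ``straightforward calculation'' the paper invokes without writing out. Your splitting $\varphi(b)-\psi(b)=(\varphi-\psi)(b-R_1R_2b)+(\tilde\varphi-\tilde\psi)(R_2b)$ in fact gives the sharper bound $\rho_{\mathcal S_2,n}\le\rho_{\mathcal S_1,n}+2\varepsilon$, and $R_2b\notin\mathcal D_1$ is harmless because the supremum defining $\rho_{\mathcal S_1,n}$ runs over all of $\mathcal S_1$.

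On the density bookkeeping, lower semicontinuity of $L_i$ is not the property you need. What you need is that the $L_i$-unit ball of $\mathcal D_i$ is norm-dense in the $L_i$-unit ball of $\mathcal S_i$. In the paper's application this holds trivially for $\mathcal D_2=\mathcal T_{\vd}$. For $\mathcal D_1=C^\infty(\mathbb T)$ it holds via Fej\'er means, since $F_N*f\to f$ uniformly with $\|(F_N*f)'\|_\infty\le\|f'\|_\infty$.
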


	We now apply these estimates in the setting of finite Toeplitz systems. 
	Let $ \mathcal S_{1} = C(\mathbb T) $
	with its usual Lipschitz seminorm $ L(f) = {\rm Lip}(f),$ 
	and let
	$ \mathcal S_{2} = \mathcal T_\vd $
	be the operator system of $ \vd \times \vd $ Toeplitz matrices, equipped with Connes' truncated seminorm $ L_\vd$, see Subsection \ref{ssec:GHconvergence} for the definition.
	
	\smallskip
	
	Recall that $P_{\vd}$ is the orthogonal projection onto the subspace spanned by the orthonormal set $\{ e_{1}, \ldots, e_{\vd}\},$ where $e_{k} (z) = z^{k}.$   Define  $ R_{\vd} : C(\mathbb T) \to \mathcal T_\vd$ by $f \mapsto P_{\vd} f P_{\vd} .$ Clearly $R_{\vd}$ is a UCP map. By \cite[Lemma 9]{W}, we have 
	\[
	L_{\vd}(R_{\vd} f) \, \leq\, {\rm Lip}(f) \qquad \text {for all } f \in C^{\infty} (\mathbb T).
	\] 
	Define $ S_\vd:\mathcal T_d\to C(\mathbb T) $ by 
	\[
	S_{\vd}(T) (z) \,=\, \frac{1}{\vd} \sum\limits_{i,j =1}^{\vd} T_{i,j} z^{i-j},
	\]
	where $T = (T_{i,j})_{i,j =0}^{\vd-1}.$ Clearly $S_{\vd}$ is linear and unital.  By \cite[Lemma 11]{W}, we have
	\[
	{\rm Lip} (S_{\vd} T) \,\leq\, L_{\vd}(T) \qquad \text{for all } T \in \mathcal T_{\vd}.
	\]
	Moreover, by \cite[Lemma 12]{W}, there exists a sequence of positive real numbers $\{\varepsilon_{\vd}\}$ converging to $0$ such that 
	\[
	\| T - R_{\vd} S_{\vd} (T) \| \, \leq\, \varepsilon_{\vd} L_{\vd} (T)   \qquad \text{for all } T \in \mathcal T_{\vd}.
	\] 
	
	Define $ E_{\vd,n} : \mathcal X_{\vd,n} \to \mathcal Y_{n} $ by  $\varphi \mapsto \varphi \circ R_{\vd}.$

	\begin{corollary}
		For every fixed $ n\geq1$,
		\[
		\rho_{n} (E_{\vd,n}(\varphi), E_{\vd,n}(\psi))
		\, \leq\,
		\rho_{\vd,n} (\varphi,\psi),
		\]
		and
		\[
		\rho_{\vd,n}(\varphi,\psi)
		\leq
		(1+\varepsilon_\vd)\,
		\rho_{n}(E_{\vd,n}(\varphi), E_{\vd,n}(\psi))
		+
		2\varepsilon_\vd.
		\]
	\end{corollary}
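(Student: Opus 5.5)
The corollary is a direct specialization of the preceding abstract lemma. I will take $\mathcal S_1=C(\mathbb T)$ with $L_1=\operatorname{Lip}$, and $\mathcal S_2=\mathcal T_\vd$ with $L_2=L_\vd$. The connecting maps are $R_1=R_\vd$, given by $f\mapsto P_\vd fP_\vd$, and $R_2=S_\vd$. For the dense subspaces I will use $\mathcal D_1=C^\infty(\mathbb T)$ (or the trigonometric polynomials) and $\mathcal D_2=\mathcal T_\vd$. The three hypotheses of the lemma are exactly the three estimates quoted from \cite[Lemmas 9, 11, 12]{W}, with $\varepsilon=\varepsilon_\vd$.

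Both seminorms vanish on scalars. For $L_\vd$ this holds because $[D_\vd,\lambda I]=0$, and for $\operatorname{Lip}$ because constants have zero derivative. The map $R_\vd$ is UCP since it is a compression, and $S_\vd$ is unital and linear, as already noted.

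With these choices, $\rho_{\mathcal S_1,n}=\rho_n$ and $\rho_{\mathcal S_2,n}=\rho_{\vd,n}$. For $\varphi\in\mathcal X_{\vd,n}\subseteq\operatorname{UCP}(\mathcal T_\vd,M_n)$ we have $\tilde\varphi=\varphi\circ R_\vd=E_{\vd,n}(\varphi)$. The two inequalities of the lemma therefore become the two displayed inequalities, with $\varepsilon_\vd$ in place of $\varepsilon$. Restricting from $\operatorname{UCP}(\mathcal T_\vd,M_n)$ to the pure maps loses nothing.

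The lemma was stated without proof, so I would also sketch its argument. For the first inequality, take $f$ with $\operatorname{Lip}(f)\le 1$. Then $L_\vd(R_\vd f)\le 1$, and so $\|\tilde\varphi(f)-\tilde\psi(f)\|=\|\varphi(R_\vd f)-\psi(R_\vd f)\|\le\rho_{\vd,n}(\varphi,\psi)$.

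For the second inequality, take $T$ with $L_\vd(T)\le1$ and write
\[
\varphi(T)-\psi(T)=(\varphi-\psi)(T-R_\vd S_\vd T)+(\tilde\varphi-\tilde\psi)(S_\vd T).
\]
Since $\varphi$ and $\psi$ are UCP, they are contractive, so the first term has norm at most $2\varepsilon_\vd$. Since $\operatorname{Lip}(S_\vd T)\le 1$, the second term has norm at most $\rho_n(\tilde\varphi,\tilde\psi)$. This gives the bound with the factor $1$, which is even better than $1+\varepsilon_\vd$, so the claimed bound follows.

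There is one point to watch. The supremum defining $\rho_n$ runs over $C^1$ functions, while Lemma 9 of \cite{W} is stated for $C^\infty$ functions. The main thing to check is therefore the density step: approximating a $C^1$ function $f$ with $\operatorname{Lip}(f)\le1$ by Fej\'er means $F_N*f$. These are smooth, satisfy $\operatorname{Lip}(F_N*f)\le\operatorname{Lip}(f)$, and converge to $f$ uniformly. Since UCP maps are norm-continuous, the supremum over the smooth class equals the supremum over the $C^1$ class. The same reasoning guarantees that $S_\vd T$ lies in the admissible class: it is a trigonometric polynomial.
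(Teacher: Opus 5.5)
Your proposal is correct and follows the paper's route exactly. The corollary is obtained by specializing the abstract comparison lemma to $\mathcal S_1=C(\mathbb T)$ and $\mathcal S_2=\mathcal T_\vd$, with $R_1=R_\vd$, $R_2=S_\vd$, and the three estimates from \cite[Lemmas 9, 11, 12]{W}; your sketch of that lemma, which the paper leaves as a ``straightforward calculation,'' is also sound (it even yields the constant $1$ in place of $1+\varepsilon_\vd$), and your Fej\'er-mean argument correctly handles the $C^1$ versus $C^\infty$ mismatch.
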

	
	\subsection{Proof of Theorem \ref{thmC}}
	
	\begin{proof}
		We first compare the metric $ \rho_{d,n} $ with the pulled-back
		metric from $\mathcal Y_{n}.$
		
		\smallskip
		
		For $ \varphi,\psi\in \mathcal X_{\vd,n} ,$ we have
		\[
		\rho_n( E_{\vd,n}(\varphi), E_{\vd,n}(\psi))
		\, \leq\,
		\rho_{\vd,n} (\varphi,\psi),
		\]
		and
		\[
		\rho_{\vd,n}(\varphi,\psi)
		\leq
		(1+\varepsilon_d) \rho_n (E_{\vd,n} (\varphi), E_{\vd,n}(\psi))
		+
		2\varepsilon_d.
		\]
		
		Let $ D_n=\operatorname{diam}(\mathcal Y_{n},\rho_n). $
		This is finite because $ \mathcal Y_{n} $ is compact in the metric $ \rho_n ,$ see \cite{Kerr03}. Thus we have
		\[
		0
		\,\leq\,
		\rho_{\vd,n} (\varphi,\psi)
		-
		\rho_n (E_{\vd,n} (\varphi), E_{\vd,n}(\psi))
		\,\leq\,
		\varepsilon_\vd D_n + 2\varepsilon_\vd.
		\]
		Hence the distortion of $ E_\vd : \mathcal X_{\vd,n} \to \mathcal Y_{n} $ satisfies
		\[
		\operatorname{dis}(E_\vd)
		\,:=\,
		\sup_{\varphi, \psi\in \mathcal X_{\vd,n}}
		\left| \rho_{\vd,n}(\varphi,\psi)
		-
		\rho_n(E_\vd(\varphi), E_\vd(\psi)) \right|
		\, \leq \,
		\varepsilon_\vd D_n + 2\varepsilon_\vd,
		\]
		Since $ \varepsilon_d\to0 $, we have $ \operatorname{dis}(E_\vd)\to 0. $
		
		\smallskip
		
		Now we use the standard Gromov--Hausdorff estimate. Suppose that
		$ F:X \to Y $ is a map between compact metric spaces with distortion at most
		$ \delta .$ Suppose moreover that for every $ y\in Y $, there exists
		$ x\in X $ such that
		$ d_Y(y,F(x))\leq \eta .$
		Then
		\[
		d_{GH}(X,Y)\leq \eta+\frac{\delta}{2},
		\]
		see \cite[Chapter 7]{Burago}.

		\smallskip
		
		Apply this estimate with
		\[
		X \,=\, \mathcal X_{\vd,n},
		\qquad
		Y \,=\, \mathcal Y_{n},
		\qquad
		F \,=\, E_\vd.
		\]
		Let $\eta_{\vd}$ be the Hausdorff distance from $E_{\vd} (\mathcal X_{\vd,n})$ to $\mathcal Y_{n}.$  Then 
		\[
		\sup_{\psi \in {\mathcal Y_{n}}} \inf_{\varphi \in \mathcal X_{\vd,n}} \rho_{n} (\psi, E_{\vd} (\varphi)) \, \le \, \eta_{\vd}.  
		\]
		This means that for every $\psi \in \mathcal Y_{n},$ there exists some $\varphi \in \mathcal X_{\vd,n}$ such that \[
		\rho_{n} (\psi, E_{\vd} (\varphi)) < \eta_{\vd} + \epsilon,
		\]
		for arbitarily small $\epsilon >0.$ In particular, we can take $\epsilon = \eta_{\vd}.$ 
		Therefore
		\[
		d_{GH}\bigl( ( \mathcal X_{d,n},\rho_{d,n}), (\mathcal Y_n,\rho_n)\bigr)
		\,\leq\,
		2 \eta_d +\frac12\operatorname{dis}(E_d)
		\longrightarrow 0.
		\]
		This proves the theorem.
	\end{proof}
	
	\noindent \textbf{Acknowledgments.}
	The first author would like to thank her research supervisor, Prof. Tirthankar Bhattacharyya, for useful discussions. The second author thanks Dr. Poornendu Kumar for suggestions that helped improve the presentation of the paper.

\end{document}